\definecolor{myblue}{RGB}{43,87,154}
\definecolor{myorange}{RGB}{242,80,34}
\definecolor{mygreen}{RGB}{0,163,0}
\tikzstyle{arrow} = [thick,->,>=stealth]
\newtheorem{theorem}{Theorem}[section]
\newtheorem{lemma}{Lemma}[section]
\newtheorem{proposition}[theorem]{Proposition}
\newtheorem{definition}{Definition}[section]
\numberwithin{equation}{section}
\DeclareMathOperator{\diag}{diag}
\begin{document}
	
	\title{Dynamic Data Pricing: A Mean Field Stackelberg Game Approach}
	
	\author{
		Lijun Bo
		\thanks{lijunbo@xidian.edu.cn, School of Mathematics and Statistics, 
    Xidian University, Xi'an, 710126, China}
		\and
		Dongfang Yang
		\thanks{yangdf@stu.xidian.edu.cn, School of Mathematics and Statistics, 
    Xidian University, Xi'an, 710126, China}
		\and
		Shihua Wang\thanks{Corresponding author: wangshihua@xidian.edu.cn, School of Mathematics and Statistics, Xidian University, Xi'an, 710126, China}	
	}
	
	\date{}
	
	\maketitle
	
	\begin{abstract}
This paper studies the dynamic pricing mechanism for data products in demand-driven markets through a game-theoretic framework. We develop a three-tier Stackelberg game model to capture the hierarchical strategic interactions among key market entities: a single data buyer, an intermediary broker, and a competitive seller group. To characterize the temporal dynamics of data quality evolution, we establish a coupled system of stochastic differential equations (SDEs) where sellers' quality investments interact through mean-field effects. Given exogenous pricing policies, we derive approximate Nash equilibrium adjustment strategies for competitive sellers using the mean field game (MFG) approach. The broker’s optimal pricing strategy is subsequently established by solving a Stackelberg leadership problem, while the buyer’s procurement policy is determined through an optimal control formulation involving conditional mean-field forward-backward SDEs (FBSDEs). Under some regularity conditions, the proposed strategies are shown to collectively form an $(\epsilon_1, \epsilon_2, \epsilon_3)$-Stackelberg equilibrium.
    
	\vspace{0.1in}
	\noindent{\textbf{Keywords}}: Stackelberg game, mean field game, data market, dynamic pricing.
	\end{abstract}
	
	\section{Introduction}
		In the era of digital economy, data is recognized an invaluable resource. Beyond its role as a raw material for product development, data is also directly monetizable as a product itself.  The expanding demand for data trading has led to the proliferation of dedicated platforms such as AWS Data Exchange, Dawex and BDEX. Transactions occur in multiple forms, primarily categorized as raw data and data products, the latter encompassing processed outputs such as query answers (e.g. \cite{KU15}) and machine learning models (e.g. \cite{LL21}). What fundamentally distinguishes data from traditional commodities are its inherent characteristics. Foremost among these is replicability: unlike physical goods, data can be reproduced at near-zero marginal cost. Additional features such as dynamicity, where the data value may evolve or decay over time, and composability, which enables the integration of datasets, further complicate its valuation and pricing. Despite extensive research on data pricing in dimensions such as truthfulness, revenue maximization, and privacy preservation (see \cite{P22} for a survey), a systematic and unified mechanism has not yet been established.

Data quality constitutes a key determinant in the pricing mechanisms of datasets, given its nature as a multidimensional construct. Through a two-stage survey and a two-phase sorting study, \cite{W96} develop a hierarchical framework for data quality. This process identified 15 key dimensions from an initial pool of 179 criteria. Based on previous research, \cite{B09} propose a fundamental set of data-quality dimensions, including accuracy, completeness, consistency, and timeliness. \cite{H15} propose a linear model that takes into account several  quality dimensions to determine the price of a dataset, including fixed cost, age, periodicity, volume, and accuracy. \cite{Y17} present a bi-level programming model for the data-pricing problem based on data quality with the objective of maximizing both platform profit and consumer utility. Two aspects of data quality are incorporated: multidimensionality and interactions between dimensions. \cite{CK19} provide different versions of the desired data products-machine learning models for quality differentiation. %Their noise-injection-based version control mechanism quantifies model quality directly through the magnitude of added noise. 
\cite{YZ19} develop a linear evaluation model that assesses data quality across multiple dimensions including accuracy, completeness, and redundancy, and subsequently determines price based on the resulting quality score.
	
As data freshness, a significant dimension of data quality, is inherently dynamic in nature, a well-designed pricing model must mirror this evolution. Several studies handle mechanisms for dynamic data pricing. \cite{NZ20} design a contextual dynamic pricing mechanism for online data market where a query may be sold to  different buyers at different time and the broker can adjust prices over time. \cite{XJ16} address the problem of revenue maximization for a data collector facing sequentially arriving sellers with unknown privacy valuations. %By modeling it as a multi-armed bandit problem, they introduce a dynamic pricing strategy that adjusts payments based on observed valuations.
Focusing on strategic data buyers who may continually submit low bids to depress prices, \cite{C22} explores the corresponding data pricing techniques to counteract such behaviors. \cite{ZA21} study a pricing problem in the context of fresh data trading, in which a destination user requests and pays for fresh data updates from a source provider, and data freshness is captured by the age of information (AoI) metric. 
		
Game theory serves as a primary analytical tool for formulating data pricing problems, effectively modeling the rational and strategic behaviors of participants in data markets (c.f. \cite{BW24}). The Stackelberg game, pioneered by \cite{V52},  has been extended to capture the interactions among multiple participants in data trading. In the Stackelberg game, the leader is endowed with the first-move priority, selects an optimal strategy by anticipating the follower's response. The follower chooses his own optimal response accordingly. In the bi-level programming model proposed by \cite{Y17}, the monopolistic platform owner acts as the leader, with data consumers as followers. \cite{XZ20} study pricing in car-sharing data markets and propose a three-layer Stackelberg game comprising a data seller, a service provider, and a data buyer. %This work positions the data seller as the price-setting leader, and incorporates blockchain as a trusted trading environment.  
Related studies with comparable settings include \cite{ZA21} and \cite{LH23}. The demand for data has exploded explosively in recent decades. On this basis, the appropriate framework catering to a demand-driven market setting, in which buyers act as active participants in price formation can be explored as in \cite{AX21}, \cite{XH21} and \cite{BL24}. In particular,  \cite{BL24} incorporate the sellers' inner competition into the data pricing problem within a Stackelberg game framework.%, allowing buyer and broker to proposing prices and sellers to strategically adjust their data quality through local differential privacy to balance compensation and privacy loss. 
		
The vast majority of the aforementioned game-theoretic studies were conducted in the context of a static economic setting. \cite{Y02} establishes a generalized framework for linear-quadratic (LQ) stochastic leader-follower differential games, contributing to the development of dynamic game theory in a continuous-time stochastic setting. Further advancing this field, \cite{BCS15} derive a maximum principle for the solution of Stackelberg differential games under both adapted open-loop and memoryless closed-loop information structures. In a dynamic context, \cite{SW16} develop a stochastic maximum principle for leader-follower differential games with asymmetric information. However, realistic data markets are often characterized by hierarchical interactions among a large number of agents. The MFG theory, introduced in the independent work of \cite{LL07} and \cite{HM06}, is used to analyze systems with a large population of rational individual agents interacting with each other. The framework has been successfully applied across diverse domains, including economics and finance (e.g. \cite{G16}), engineering (e.g. \cite{MC11}), social studies (e.g. \cite{BT16}) and among others. In contrast to the decentralized nature of MFGs, mean field type control (MFC) seeks a centralized strategy with the objective of steering the collective behavior toward a system-level optimum. The existing literature has explored MFC problems from various perspectives using different methodologies, such as the stochastic maximum principle for McKean–Vlasov dynamics (\cite{CD15}), models with common noise (\cite{G16}), and dynamics of mean-field BSDEs (\cite{LS19}). The integration of Stackelberg games with MFG or MFC problem has been preliminarily explored in the literature. \cite{NC12} handles a large-population linear-quadratic (LQ) leader-follower stochastic multi-agent system and establish an $(\epsilon_1, \epsilon_2)$-Stackelberg-Nash equilibrium for such systems. \cite{BC15, BC17} concern mean field Stackelberg games involving time-delay effects. \cite{MB18} examine linear-quadratic-Gaussian (LQG) mean field Stackelberg games with heterogeneous agents with a fixed-point method. \cite{HS21} analyze a LQG large-population system comprising three agent types: a major leader, minor leaders and minor followers. For further research on this field, one may also refer to \cite{LJ18},  \cite{FH20}, \cite{W25} and among others.
	
In the current paper, we examine the dynamic pricing problem for data and data products in demand-driven markets through the lens of game theory. A three-layer Stackelberg game framework is established, characterizing the hierarchical interactions among a data buyer, a broker, and $n$ competitive data sellers. In the demand-driven market scenario (c.f. \cite{H24}), consumers have evolved from passive recipients to active participants who can signal demand through bidding.  The data broker generates profit by transforming purchased raw data into processed products (e.g., query answers and machine learning models) for the buyer. A large number of sellers supply raw datasets to the broker  while competing through quality differentiation. Moreover, we assume that sellers possess the capability to adjust primary data quality through techniques such as outlier detection, data integration, deduplication, data cleaning, and noise injection. Quality adjustment incurs a cost and loss of privacy. In a similar spirit to the posted price mechanism in \cite{BL24}, we take into account the dynamic nature of the data. The data quality dynamics of sellers are modeled via a system of stochastic differential equations with both idiosyncratic and common noises. %Notably, in real scenarios, the roles of market participants may not be fixed; for instance, a data vendor may also acquire supplementary data from another seller to enrich its own offerings. Such competitive and interactive mechanisms drive quality toward an average level. 
Under the given price, sellers supply data at varying quality levels to trade-off profits against associated costs in a finite horizon. As a dominant leader, the buyer optimizes the utility of the data products minus payment to the broker. The broker's objective is to maximize the average revenue generated from acquiring and processing data obtained from $n$ sellers. The mean-field output adjustment model in \cite{W25} with a hierarchical structure between a regulator and multiple firms shares structural similarities with ours. The dominant regulator effects the stochastic sticky price to optimize social cost under the firms' Nash equilibrium responses as followers. The solution follows a direct method: first, solving a centralized $N$-agent game/control problem, then deriving decentralized strategies via the mean-field approximation as $N$ tends to infinity. Different from the methods, we start by deriving an MFG formulation using conditional expectations w.r.t. the common noise. By tackling a stochastic control problem of a representative seller under the given price with stochastic maximum principle, we acquire an open loop solution. A fixed-point equation system derived from the consistency condition is solved, and then a set of decentralized strategies, which is verified to be the approximate Nash equilibrium for sellers, is constructed. Auxiliary optimal control problems governed by conditional mean-field FBSDEs are resolved via variational analysis, and asymptotically optimal strategies are designed for broker and buyer. By a rigorous analysis, the established strategies are shown to form an $(\epsilon_1, \epsilon_2, \epsilon_3)$-Stackelberg equilibrium for the sequential game. This multi‑precision equilibrium concept quantifies the approximate optimality at each layer and ensures that no player can improve its payoff more than a small error by unilateral deviation, thereby providing a robust solution concept for large‑scale hierarchical stochastic games.
		
The main contributions of this paper are twofold. First, we propose a tractable dynamic hierarchical data market model that integrates a realistic partial information structure, involving a buyer, a broker, and a large number of competitive data sellers, which is novel and rather complex in the existing literature. In the model, the leader influences followers solely by announcing a pricing rule that enters their objective functions, which differs from the standard Stackelberg setting.  Second, this setup leads to complex internal coupling structures within both the leader's control problem and the followers' game problem, rendering their analysis challenging. To overcome it, we develop the mean field approximation technique to construct an approximate Nash equilibrium among followers. For the control problem of leader (sub-leader), we adopt an indirect analytical routine: formulating an auxiliary control problem and deriving an optimal solution characterized by conditional mean field FBSDEs. Rather than directly decoupling this conditional mean-field system, the final solution is constructed by solving equivalent low-dimensional FBSDE systems via the Riccati equations, which are proven to be asymptotically optimal for the leader.

This paper is structured as follows. Section~\ref{sec:model} presents our dynamic pricing model and the underlying market structure. The analysis of the Stackelberg game using backward induction is documented in Sections \ref{sec:3} and \ref{sec:4}. Specifically, Section \ref{sec:3} examines a non-cooperative game among sellers, where we devise decentralized adjustment strategies via mean-field approximation and prove that the resulting strategy profile forms an approximate Nash equilibrium (or approximate Nash equilibrium). Section \ref{sec:4} derives the equilibrium pricing strategies for both the broker and the buyer. Finally, Section \ref{sec:numerical} presents numerical simulations to illustrate the behavior of the model.
	
\section{The Model and Problem Formulation}\label{sec:model}
    
Let $T\in(0,\infty)$ be the terminal horizon. Consider a demand-driven data market consisting of one buyer, one broker, and $n$ competitive sellers. The buyer proposes the demand for data products and the price he/she is willing to pay, denoted by $p^{\rm bu}=(p_t^{\rm bu})_{t\in[0,T]}$. The broker buys raw data sets at his intended price $p^{\rm br}=(p_t^{\rm br})_{t\in[0,T]}$ from $n$ sellers, processes the data sets into products, and then sells them to the buyer. The sellers who possess data of the same type can adjust the data quality to sell data in distinctive versions.
	
Let $ (\Omega, \mathcal{F}, \mathbb{P}) $ be a probability space with filtration $ \mathbb{F} = (\mathcal{F}_t)_{t \in[0,T]} $ satisfying the usual conditions, which supports $n+1$ independent Brownian motions (BMs) $ W^i = (W_t^i)_{t \in[0,T]}$ for $ i = 0,1, \ldots, n$.  Denote by $\mathbb{F}^0=(\mathcal{F}_t^0)_{t\in [0,T]}=\sigma(W_s^0; s\leq t)_{t\in [0, T]}$, i.e., the filtration generated by the common BM $W^0$. %Moreover, let $\overline{X}_t$ be the conditional expectation of $X_t$ given $\mathcal{F}_t^0$, i.e., $\overline{X}_t=\mathbb{E}[X_t|\mathcal{F}_t^0]$  for $t\in[0,T]$. 
Let $\mathbb{L}_{\mathbb{F}}^2([0,T])$ be the space of $\mathbb{F}$-progressively measurable real-valued processes $x=(x_t)_{t\in[0,T]}$ satisfying $\|x\|_{2,T}^2:=\mathbb{E} [\int_0^T|x_t|^2 dt] < \infty$. Consider $\tau=(\tau_t^i)_{t\in[0,T]}\in\mathbb{U}^{\rm ad}:=\mathbb{L}_{\mathbb{F}}^2([0,T])$ is the strategy made to adjust the data quality by seller $i$. For $i=1,\ldots,n$, let $Q_t^i$ be the data quality of the $i$-th seller at time $t\in [0,T]$. The dynamics of $Q^i=(Q_t^i)_{t\in[0,T]}$  is described by, for $t\in(0,T]$,
	\begin{align}\label{dq}
		d Q_t^i=\left[\alpha\left(\overline{Q}_t^{(n)}-Q_t^i\right)+\beta \tau_t^i\right]dt+\sigma  d W_t^i+\sigma_0dW_t^0, \quad Q_0^i=q_0, 
	\end{align}
where $\overline{Q}_t^{(n)}:=\frac{1}{n}\sum_{i=1}^nQ_t^i$ represents the average quality of data among $n$ sellers at time $t\in[0,T]$, $\alpha>0$ is the convergence rate to the average quality in the market and the parameter $\beta>0$ measures the effect of the adjustment. BM $W^i$ represents the stochastic factor that affects the data quality processed by the seller $i$ from the process of data cleaning, data transmission, etc; while BM $W^0$ is the common noise in the data market.  In fact, data quality is determined by multiple attributes that interact in complex ways. Therefore, rather than examining individual quality dimensions in isolation, we model data quality as an integrated whole, which can be quantified through a quality score. A phenomenon known in the study of \cite{AM22} as external data is that data from different sellers may be relevant. In addition, data usually need to be integrated with other data sources to achieve greater value (composability in \cite{BW24}). An excessive deviation from the market average undermines interoperability, giving sellers a strong incentive to align their quality with the market mean. These are captured in our model through the mean-reversion term, which reflects the inherent pressure of the market toward quality convergence.

Let $\mathbb{U}^{\rm br}=\mathbb{U}^{\rm bu}$ be the set of $\mathbb{F}^0$-progressively measurable real-valued processes $x=(x_t)_{t\in[0,T]}$ satisfying $\|x\|_{2,T}\leq C_T$ for $C_T>0$ depending on $T$ only.	In the Stackelberg game, the buyer acts as the leader, the broker as the sub-leader, and the sellers as the followers. Their objective functions are denoted by $J^L(\cdot)$, $J^S(\cdot)$, and $J^F(\cdot)$, respectively.  We instantiate the volume of data sold by every seller is positively correlated with the quality he/she contributes with the correlation coefficient $a >0$. The net profit of seller $i$ is the payment from the broker minus the cost, which is given by, for $p^{\rm br}\in\mathbb{U}^{\rm br}$ and $\boldsymbol{\tau}=(\tau^1,\ldots,\tau^n)\in(\mathbb{U}^{{\rm ad}})^n$,
\begin{align}\label{eq:JiF}
		J_i^F(p^{\rm br}, \boldsymbol{\tau})=\mathbb{E}\Bigg[\int_0^T\bigg(\underbrace{a p_t^{\rm br}Q_t^i}_{\text{Payment from broker}}-\underbrace{\Big(\frac{b }{2}(Q_t^i)^2+c  (\tau_t^i)^2\Big)}_{\text{Adjustment cost}}\bigg)dt\Bigg],  
	\end{align}
where $b>0$ is the marginal cost coefficient adjusted per unit quality; while $c>0$ is the cost coefficient for adjustment effort. This objective function incorporates quadratic cost terms in line with the principle of increasing marginal cost. This functional form has been adopted in both static and dynamic models of data pricing (\cite{BL24} and \cite{AX21}).
	
The broker's average profit is calculated by subtracting the seller's remuneration and manufacturing costs from the buyer's payment, for $(p^{\rm bu},p^{\rm br})\in\mathbb{U}^{\rm bu}\times\mathbb{U}^{\rm br}$,
	\begin{align}\label{eq:JSpbupbr}
		J^S(p^{{\rm bu}}, p^{{\rm br}}, \boldsymbol{\tau})=\mathbb{E}\Bigg[\int_0^T\Big[\underbrace{p_t^{{\rm bu}}\frac{\nu}{n}\sum_{i=1}^nQ_t^i}_{\text{Payment from buyer}}-\underbrace{\frac{1}{n}\sum_{i=1}^np_t^{{\rm br}}a Q_t^i}_{\text{Payment to sellers}}-\underbrace{\Big(\frac{\kappa}{n}\sum_{i=1}^n( Q_t^i)^2+\frac{1}{2}(p_t^{{\rm br}})^2\Big)}_{\text{Manufacturing cost}}\Big]dt\Bigg],    
	\end{align}    
where $\nu>0$ is the quality-quantity correlation coefficient which measures how the data product quality affects the sales. $\kappa$ is the manufacture cost parameter for the broker.  The term $\frac{\kappa}{n}\sum_{i=1}^n( Q_t^i)^2$ represents the average manufacturing cost, and the term $\frac{1}{2}(p_t^{\rm br})^2$ is the costs associated with frequent price adjustments for sellers (c.f. \cite{R82}). 
	
The average profit of the buyer from the market transactions is the difference between the utility gained from the data product and the payment to the broker, for $p^{{\rm bu}}\in \mathbb{U}^{\rm bu}$,
	\begin{align}\label{eq:JLpbu}
		J^L(p^{{\rm bu}})=\mathbb{E}\Bigg[\int_0^T\bigg(\underbrace{-\frac{(p_t^{{\rm bu}})^2}{2}+\frac{\lambda}{n}\sum_{i=1}^nQ_t^i-\frac{\rho}{n}\sum_{i=1}^n( Q_t^i)^2}_{\text{Utility}}-\underbrace{p_t^{{\rm bu}}\frac{\nu}{n}\sum_{i=1}^n Q_t^i}_{\text{Payment to broker}}\bigg)dt\Bigg],
	\end{align} 
where the term $\frac{(p_t^{\rm bu})^2}{2}$ represents the quadratic cost of price adjustment over time (c.f. \cite{R82}).  The utility of product quality to buyers is formulated as a quadratic function of the form $U(x):= \lambda x-\rho x^2$ with the slope parameters $\lambda>0$ and $\rho>0$. The utility that buyers obtain from data products mainly depends on their overall quality level. The overall quality level can be measured by the average quality of all data sources. The 3rd term indicates the buyer's aversion to the unevenness of quality (\cite{CL15}) as it will break the composability of data, and will further lead to a deterioration in the performance of data products. The buyer purchases a data product, whose quality is aggregated from all the sellers' data (\cite{BL24}). Ultimately, the buyer cares about the overall quality and stability of the integrated data products, rather than the contribution of individual data sources or the amount of sellers. The general structure of the market is shown in Figure \ref{fig:structure}. 
	
	\begin{figure}[htbp]
		\centering
		\begin{tikzpicture}[
			rolebox/.style={
				draw, rounded corners=8pt, 
				minimum height=1.5cm,
				align=center, font=\sffamily\bfseries,
				drop shadow={shadow xshift=2pt, shadow yshift=2pt}
			},
			buyer/.style={rolebox, fill=blue!10, thick, text=myblue},
			broker/.style={rolebox, fill=orange!5, thick, text=myorange},
			seller/.style={rolebox, fill=green!5, thick, text=mygreen},
			obj/.style={ rectangle, text centered, draw=none}
			%arrow/.style={->, >=stealth', shorten >=2pt, thick},
			flowtext/.style={midway, font=\tiny\sffamily},
			dotnode/.style={inner sep=0pt, minimum size=0pt}]
			\node(buyer)[buyer] at (0,0)  {Buyer \\[-2pt] \footnotesize(Leader)};
			\node (broker) [broker ,below of=buyer, yshift=-2.5cm] {{\footnotesize \makecell{Broker\\(Sub-leader)}}};
			\node (seller2) [seller, below of=broker, yshift=-2.5cm] { \scriptsize{Seller 2} \\[-2pt] \scriptsize(Follower)};
			\node (sellern) [seller, right of =seller2, xshift=1cm] {\scriptsize{Seller n} \\[-2pt] \scriptsize(Follower)};
			\node (seller1) [seller,left of =seller2, xshift=-0.5cm] {\scriptsize{Seller 1} \\[-2pt] \scriptsize(Follower)};
			\node (objective1) [obj, right of=buyer, xshift=5.5cm] {{\footnotesize $	J^L(p^{{\rm bu}})=\mathbb{E}\Bigg[\int_0^T\Big(\underbrace{-\frac{(p_t^{{\rm bu}})^2}{2}+\frac{\lambda}{n}\sum_{i=1}^nQ_t^i-\frac{\rho}{n}\sum_{i=1}^n( Q_t^i)^2}_{\text{Utility}}-\underbrace{p_t^{{\rm bu}}\nu\frac{1}{n}\sum_{i=1}^n Q_t^i}_{\text{Payment to broker}}\Big)dt\Bigg]$}};
			\node (objective2) [obj, right of=broker, xshift=5.9cm]{{\footnotesize $J^S(p^{{\rm bu}}, p^{{\rm br}}, \boldsymbol{\tau})=\mathbb{E}\Bigg[\int_0^T\Big(\underbrace{p_t^{{\rm bu}}\frac{\nu}{n}\sum_{i=1}^nQ_t^i}_{\text{\makecell{Payment \\from buyers}}}-\underbrace{\frac{1}{n}\sum_{i=1}^np_t^{{\rm br}}a Q_t^i}_{\text{\makecell{Payment \\to sellers}}}-\underbrace{\Big(\frac{\kappa}{n}\sum_{i=1}^n( Q_t^i)^2+\frac{(p_t^{{\rm br}})^2}{2}\Big)}_{\text{\makecell{Manufacturing\\ cost}}}\Big)dt\Bigg]$}};
			\node (objective3) [obj, right of=sellern, xshift=4.5cm]{{\footnotesize $J_i^F(p^{{\rm br}}, \boldsymbol{\tau})=\mathbb{E}\Bigg[\int_0^T\Big(\underbrace{p_t^{{\rm br}}a Q_t^i}_{\text{\makecell{Payment\\ from broker}}}-\underbrace{\big(\frac{b}{2}(Q_t^i)^2+c  (\tau_t^i)^2\big)}_{\text{Adjustment cost}}\Big)dt\Bigg]$}};
			\node[dotnode, right=of seller2, xshift=-1cm ] (dots) {$\cdots$};
			\draw[arrow] (buyer) -- node[fill=white, midway, swap] {{\scriptsize \makecell{\textbf{Propose data}\\ \textbf{product price $p^{{\rm bu}}$}}}}(broker);    
			\draw [arrow] (broker) -- (seller1);
			\draw [arrow] (broker) --  node[fill=white, midway, swap] {{\scriptsize \makecell{\textbf{Propose data}\\\textbf{price $p^{{\rm br}}$}}}}(seller2);
			\draw [arrow] (broker) -- (sellern);
			\draw[dashed, line width=1pt] ([xshift=-1.55cm, yshift=0.3cm]buyer.north west) rectangle ([xshift=1cm, yshift=-0.08cm]objective1.south east);
			\draw[dashed, line width=1pt] ([xshift=-1.35cm, yshift=0.4cm]broker.north west) rectangle ([xshift=0.1cm, yshift=-0.1cm]objective2.south east);
			\draw[dashed, line width=1pt] ([xshift=-0.1cm, yshift=0.35cm]seller1.north west) rectangle ([xshift=1.35cm, yshift=-0.3cm]objective3.south east);
		\end{tikzpicture}
		\caption{The data market structure for three parties.}
		\label{fig:structure}
	\end{figure}
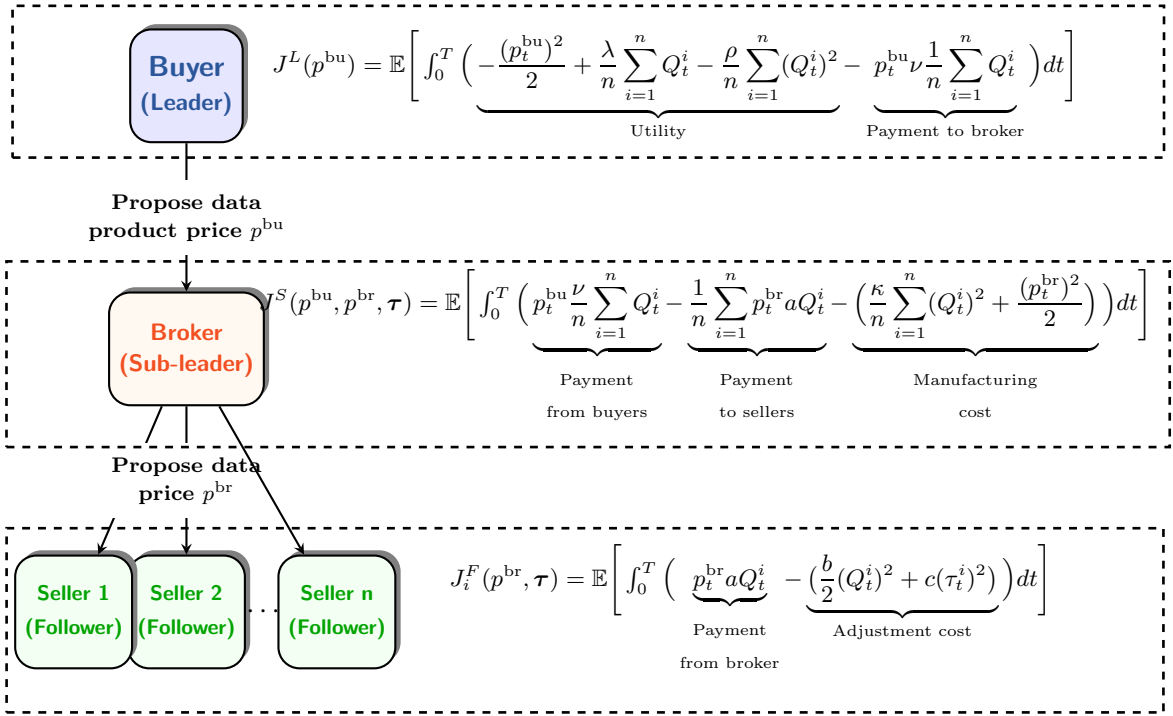
The aim is to seek the optimal strategy of adjustment and pricing for all three parties to maximize their own net profit. To do it, we provide the definition of the Stackelberg equilibrium below:
\begin{definition}[$(\epsilon_1, \epsilon_2, \epsilon_3)$-Stackelberg Equilibrium]
Let $\boldsymbol{\tau}^*=(\tau^{*,1}, \ldots, \tau^{*,n})$ and $\boldsymbol{\tau}^{*,-i}=(\tau^{*,1},$ $ \ldots, \tau^{*, i-1}, \tau^i, \ldots, \tau^{*,n})$. A vector of admissible strategy $(p^{{\rm bu},*}, p^{{\rm br},*}, \boldsymbol{\tau}^*)$ is called an $(\epsilon_1, \epsilon_2, \epsilon_3)$-Stackelberg equilibrium if it holds that
\begin{itemize}
\item[{\rm(i)}] $\boldsymbol{\tau}^*(p^{\rm br})$ constitutes an $\epsilon_1$-Nash equilibrium for a given $p^{\rm br}\in\mathbb{U}^{\rm br}$, i.e., $J_i^F(p^{\rm br}, \boldsymbol{\tau}^*(p^{\rm br}))+\epsilon_1 \geq \sup_{\tau^i\in \mathbb{U}^{\rm ad}} J_i^F(p^{\rm br}, \boldsymbol{\tau}^{*,-i}(p^{\rm br}), \tau^i)$ for all $i=1,\ldots,n$. In other words, given a price process $p^{{\rm br}}$ from the sub-leader, if all followers except $i$ employ a set of control laws $\boldsymbol{\tau}^{*,-i}$, any deviation from $\tau^{*,i}$ can yield a profit increase of at most $\epsilon_1$.

\item[{\rm(ii)}] for a given $p^{\rm bu}\in\mathbb{U}^{\rm bu}$, $J^S(p^{\rm bu}, p^{{\rm br},*}(p^{\rm bu}), \boldsymbol{\tau}^*(p^{{\rm br},*}))+\epsilon_2 \geq \sup_{p^{\rm br}\in\mathbb{U}^{\rm br}}J^S(p^{\rm bu}, p^{\rm br}, \boldsymbol{\tau}^*(p^{\rm br}))$. In other words, for the sub-leader, given the leader’s decision $p^{{\rm bu}}$,  the sub-leader cannot gain more than $\epsilon_2$ by choosing any alternative strategy $p^{{\rm br}}$ anticipating the followers’ response $\boldsymbol{\tau}^*(p^{{\rm br}})$.

\item[{\rm(iii)}] $J^L(p^{{\rm bu},*})+\epsilon_3\geq  \sup_{p^{\rm bu}\in\mathbb{U}^{\rm bu}}J^L(p^{\rm bu})$. In other words, the leader’s strategy $p^{{\rm bu},*}$ is $\epsilon_3$-optimal, ensuring that the leader’s payoff cannot be increased by more than $\epsilon_3$ through any other choice, taking into account the sub-leader’s and followers’ equilibrium responses.
\end{itemize}
\end{definition}

\section{Mean Field Game Problem for Followers}\label{sec:3}

In this section, we consider the following $n$-player game problem for sellers, given an arbitrary strategy $p^{\rm br} \in \mathbb{U}^{\rm br}$ of the broker, for $i=1,\ldots,n$,
\begin{align}\label{npg}
\begin{cases}
\displaystyle\sup_{\tau^i\in \mathbb{U}^{\rm ad}}J_i^F\left(p^{{\rm br}}, \tau\right)=\sup_{\tau^i\in \mathbb{U}^{\rm ad}}\mathbb{E}\left[\int_0^T(p_t^{{\rm br}}a Q_t^i-\frac{b }{2}(Q_t^i)^2-c  (\tau_t^i)^2)dt\right],\\[1em]
\displaystyle~ d Q_t^i=\left(\alpha\left(\overline{Q}_t^{(n)}-Q_t^i\right)+\beta \tau_t^i\right)dt+\sigma d W_t^i+\sigma_0dW_t^0,~~ Q_0^i=q_0.		
\end{cases}
\end{align}
Due to the intricate nature resulting from the coupling of the state-average, it is tractable to solve this problem directly. We turn to study the corresponding MFG problem, and then establish an approximate equilibrium when $n$ is large.

\subsection{Mean field equilibrium}\label{sec:MFE}

We consider the MFG for sellers under an arbitrary strategy $p^{{\rm br}}\in\mathbb{U}^{\rm br}$ of the broker. Let $m = (m_t)_{t \in [0, T]}\in \mathbb{L}_{\mathbb{F}^0}^2([0,T])$ be a continuous process. For a representative seller in the market, the data quality dynamics is given by 
\begin{align}\label{mfg1}
dQ_t^m=\left(\alpha\left(m_t-Q_t^m\right)+\beta\tau_t^m\right)dt+\sigma d W_t+\sigma_0dW_t^0,~~Q_0^m=q_0,
\end{align}
where $W=(W_t)_{t\in[0,T]}$ is a scalar Broanwin motion which is independent of BMs $W^0,W^1,\ldots,W^n$. The optimal control problem for the representative seller is
\begin{align}\label{mfg2}
\sup_{\tau^m\in \mathbb{U}^{\rm ad}}\tilde{J}^F\left(p^{{\rm br}}, m; \tau\right)=\sup_{\tau^m\in \mathbb{U}^{\rm ad}}\mathbb{E}\left[\int_0^T\left(ap_t^{{\rm br}}Q_t^m-\frac{b}{2}(Q_t^m)^2 -c (\tau_t^m)^2\right)dt\right].
\end{align}

The definition of mean field equilibrium (MFE) for sellers is provided below:
\begin{definition}[MFE for sellers]\label{def}
Given $p^{{\rm br}}\in\mathbb{U}^{\rm br}$,  let $\tau^{m,*}\in\mathbb{U}^{\rm ad}$ be the best response to the control problem \eqref{mfg1}-\eqref{mfg2}. Then, $\tau^{m,*}$ is called a MFE if it is the best response to itself in the sense that $m_t=\mathbb{E}[Q_t^{m,*}| \mathcal{F}_t^0]$ for $t\in [0, T]$,  where $Q^{m,*}=(Q_t^{m,*})_{t\in[0,T]}$ satisfies \eqref{mfg1} under $\tau^{m,*}\in\mathbb{U}^{\rm ad}$.
\end{definition}
To establish a MFE for sellers, we first handle the solvability of the control problem \eqref{mfg1}-\eqref{mfg2}, whose solution is documented in the following lemma:   
\begin{lemma}\label{response}
Given $p^{{\rm br}}\in\mathbb{U}^{\rm br}$, let $(Q^{m,*},Y^{m,*},Z^{m,*})=(Q_t^{m,*},Y_t^{m,*},Z_t^{m,*})_{t\in[0,T]}$ depending on $p^{{\rm br}}\in\mathbb{U}^{\rm br}$ satisfy the following FBSDE:
\begin{align}\label{fbsde3.1}
\begin{cases}
    \displaystyle dQ_t^{m,*}=\left(\alpha\left(m_t-Q_t^{m,*}\right)+\frac{\beta^2}{2c}Y_t^{m,*}\right)dt+\sigma dW_t+\sigma_0dW_t^0,~~Q_0^{m,*}=q_0,\\[0.6em]
    \displaystyle dY_t^{m,*}=\left(bQ_t^{m,*}+\alpha Y_t^{m,*}- ap_t^{{\rm br}}\right)dt+Z_t^{m,*}dW_t+Z_t^{m, *,0}dW_t^0,~~Y_{T}^{m,*}=0.
\end{cases}
\end{align}
Define $\tau_t^{m,*}(p^{{\rm br}}):=\frac{\beta}{2c}Y_t^{m,*}$ for $t\in[0,T]$. Then, $\tau^{m,*}(p^{{\rm br}})=(\tau_t^{m,*}(p^{{\rm br}}))_{t\in[0,T]}\in\mathbb{U}^{\rm ad}$ is an optimal strategy for the representative control problem \eqref{mfg1}-\eqref{mfg2}. Moreover, $\tau^{m,*}$ has the following state-feedback form given by 	
$\tau_t^{m,*}(p^{{\rm br}})=\frac{\beta}{2c}(\xi_t Q_t^{m,*}+(\vartheta_t-\xi_t)m_t+\zeta_t)$ for $t\in [0, T]$, where
$(\zeta,\sigma^{\zeta})=(\zeta_t,\sigma_t^{\zeta})_{t\in[0,T]}$ obeys that
\begin{align}\label{fbsde3.2}
\begin{cases}
	\displaystyle dQ_t^{m,*}=\left[\Big(\alpha-\frac{\beta^2}{2c}\xi_t\Big)(m_t-Q_t^{m,*})+\frac{\beta ^2}{2c}(\vartheta_tm_t+\zeta_t)\right]dt+\sigma dW_t+\sigma_0dW_t^0,~Q_0^{m,*}=q_0,\\[0.5em]
	\displaystyle d\zeta_t=\left(\left(\alpha-\frac{\beta^2}{2c}\xi_t\right)\zeta_t- ap_t^{{\rm br}}\right)dt+\sigma_t^{\zeta}dW_t^0,~~\zeta_T=0,
\end{cases}
\end{align}
and the deterministic functions are given by, for $t\in[0,T]$,
\begin{align}\label{xi}
\begin{cases}
\displaystyle \xi_t=\frac{be^{(\delta_1^+-\delta_1^-)(T-t)}-b}{\delta_1^-e^{(\delta_1^+-\delta_1^-)(T-t)}-\delta_1^+},~~\delta_1^{\pm}:=-\alpha\pm\sqrt{\alpha^2+\frac{b\beta^2}{2c}},~~\xi_T=0,\\[0.6em] \displaystyle \vartheta_t=\frac{be^{(\delta_2^+-\delta_2^-)(T-t)}-b}{\delta_2^-e^{(\delta_2^+-\delta_2^-)(T-t)}-\delta_2^+},~ ~\delta_2^{\pm}:=-\frac{\alpha}{2}\pm\sqrt{\frac{\alpha^2}{4}+\frac{b\beta^2}{2c}},~~\vartheta_T=0.
\end{cases}
\end{align}
\end{lemma}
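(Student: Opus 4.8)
The plan is to read \eqref{mfg1}--\eqref{mfg2} as a (conditional) linear--quadratic stochastic control problem with a jointly concave reward, solve it by the stochastic maximum principle to obtain the open-loop optimizer, and then put this optimizer into feedback form by decoupling the resulting conditional mean-field FBSDE.

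First I would form the (maximization) Hamiltonian $H(t,q,\tau,y)=\big(\alpha(m_t-q)+\beta\tau\big)y+ap_t^{\rm br}q-\tfrac b2q^2-c\tau^2$, read off the stationarity condition $\partial_\tau H=\beta y-2c\tau=0$, which yields $\tau_t^{m,*}=\tfrac{\beta}{2c}Y_t^{m,*}$, and write the adjoint equation $dY_t=-\partial_q H\,dt+Z_t\,dW_t+Z_t^{0}\,dW_t^0$, $Y_T=0$; since $-\partial_q H=bq+\alpha y-ap_t^{\rm br}$ this is exactly the backward equation in \eqref{fbsde3.1}, and substituting $\tau^{m,*}$ into \eqref{mfg1} gives the forward one. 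Because the running reward is jointly concave in $(q,\tau)$, the drift is affine in $(q,\tau)$, and there is no terminal reward, the sufficient maximum principle applies: given any admissible $\tau$ with state $Q$, note that $Q^{m,*}-Q$ has vanishing diffusion (the control enters only the drift), apply It\^o to $Y_t^{m,*}(Q_t^{m,*}-Q_t)$, use $Y_T^{m,*}=0$, $Q_0^{m,*}=Q_0=q_0$ and $\partial_\tau H=0$ along the optimizer, and combine with the first-order subgradient inequality for the concave reward to conclude $\tilde J^F(p^{\rm br},m;\tau^{m,*})\ge\tilde J^F(p^{\rm br},m;\tau)$. Hence $\tau^{m,*}$ is optimal provided \eqref{fbsde3.1} is solvable.

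For solvability and the feedback form I would exploit the conditional mean-field structure of \eqref{fbsde3.1}. Conditioning on $\mathcal F_t^0$ (legitimate since $W\perp W^0$) yields a lower-dimensional linear FBSDE for $\big(\mathbb E[Q_t^{m,*}\mid\mathcal F_t^0],\mathbb E[Y_t^{m,*}\mid\mathcal F_t^0]\big)$ driven only by $W^0$; the affine ansatz $\mathbb E[Y_t^{m,*}\mid\mathcal F_t^0]=\vartheta_t\,\mathbb E[Q_t^{m,*}\mid\mathcal F_t^0]+\zeta_t$ decouples it, with coefficient-matching producing the scalar Riccati ODE for $\vartheta$ and the remaining terms producing the linear BSDE for $(\zeta,\sigma^\zeta)$ in \eqref{fbsde3.2}. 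For the fluctuation $Q_t^{m,*}-\mathbb E[Q_t^{m,*}\mid\mathcal F_t^0]$ the ansatz $Y_t^{m,*}-\mathbb E[Y_t^{m,*}\mid\mathcal F_t^0]=\xi_t\big(Q_t^{m,*}-\mathbb E[Q_t^{m,*}\mid\mathcal F_t^0]\big)$ gives the Riccati ODE for $\xi$; recombining (and using the consistency $\mathbb E[Q_t^{m,*}\mid\mathcal F_t^0]=m_t$ built into Definition \ref{def}) yields $Y_t^{m,*}=\xi_tQ_t^{m,*}+(\vartheta_t-\xi_t)m_t+\zeta_t$, i.e.\ the asserted feedback law, and substituting this back into \eqref{mfg1} gives the closed-loop SDE in \eqref{fbsde3.2}. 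Each Riccati ODE has the form $\dot y=-\tfrac{\beta^2}{2c}y^2+\kappa y+b$, $y_T=0$, with $\kappa\in\{\alpha,2\alpha\}$ and $b,\tfrac{\beta^2}{2c}>0$; since the quadratic coefficient is negative and $b>0$, the backward solution stays in $[r_-,0]$ with $r_-<0$ the negative stationary root, so it never explodes on $[0,T]$, and one checks directly that \eqref{xi} solves these ODEs with non-vanishing denominator because $\delta_1^-<0<\delta_1^+$ (resp.\ $\delta_2^-<0<\delta_2^+$) and the exponential factor is $\ge1$. The BSDE for $(\zeta,\sigma^\zeta)$ has bounded deterministic coefficients and $\mathbb F^0$-progressively measurable square-integrable data ($p^{\rm br}\in\mathbb U^{\rm br}$), hence a unique $\mathbb F^0$-adapted solution, and the closed-loop forward equation is a linear SDE with bounded coefficients, hence well posed.

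Finally, the standard a priori estimates for the decoupled linear SDE/BSDE give $Y^{m,*}\in\mathbb L^2_{\mathbb F}([0,T])$, whence $\tau^{m,*}=\tfrac{\beta}{2c}Y^{m,*}\in\mathbb U^{\rm ad}$, which together with the optimality step above completes the proof. I expect the crux to be the solvability step — establishing that the coupled conditional mean-field FBSDE \eqref{fbsde3.1} has a unique solution — which really reduces to showing the two Riccati ODEs do not blow up on $[0,T]$ and to carrying out the $\mathcal F^0$-conditioning cleanly; once $\xi$ and $\vartheta$ are under control, the rest is routine linear SDE/BSDE theory together with the convexity computation.
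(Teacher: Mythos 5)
Your proposal is correct and follows essentially the same route as the paper: the stochastic maximum principle yields the open-loop characterization $\tau^{m,*}=\frac{\beta}{2c}Y^{m,*}$ and the FBSDE \eqref{fbsde3.1}, and the affine ansatz (your split into conditional mean and fluctuation is just the paper's combined ansatz $Y_t^{m,*}=\xi_t(Q_t^{m,*}-m_t)+\vartheta_t m_t+\zeta_t$ written in two steps, both invoking the consistency $m_t=\mathbb{E}[Q_t^{m,*}\mid\mathcal{F}_t^0]$) produces the same Riccati ODEs for $\xi,\vartheta$ and the linear BSDE for $(\zeta,\sigma^{\zeta})$. Your added details --- the concavity-based sufficiency verification via It\^o on $Y_t^{m,*}(Q_t^{m,*}-Q_t)$ and the trapping-region argument for non-explosion of the Riccati solutions --- are correct elaborations of steps the paper leaves implicit.
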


\begin{proof}
We solve problem \eqref{mfg1}-\eqref{mfg2} by using the stochastic maximum principle (SMP). First, we introduce the reduced Hamiltonian given by, for $(t,\omega,q, y,\tau)\in[0,T]\times\Omega\times \mathbb{R}^3$,
\begin{align*}
H^{m}(t, \omega; q, y,\tau)=y(\alpha(m_t(\omega)-q)+\beta\tau)+ap_t^{{\rm br}}(\omega)q-\frac{b}{2}q^2-c\tau^2.    
\end{align*}
For an optimal strategy $\tau^{m,*}=(\tau_t^{m,*})_{t\in[0,T]}\in\mathbb{U}^{\rm ad}$ of control problem \eqref{mfg1}-\eqref{mfg2}, let $(Q^{m,*},Y^{m,*})=(Q^{m,*}_t,Y^{m,*}_t)_{t\in[0,T]}$ be the corresponding state trajectory and the adjoint process, i.e., $(Q^{m,*},Y^{m,*})$ satisfies the following FBSDEs given by 
\begin{align*}
\begin{cases}
    \displaystyle dQ_t^{m,*}=\partial_yH^m(t, \omega; Q_t^{m,*}, Y_t^{m,*}, \tau_t^{m,*})dt+\sigma dW_t+\sigma_0dW_t^0,~~Q_0^{m,*}=p_0,\\[0.6em]
    \displaystyle dY_t^{m,*}=\partial_qH^m(t, \omega; Q_t^{m,*}, Y_t^{m,*}, \tau_t^{m,*})dt+Z_t^{m,*}dW_t+Z_t^{m,*,0}dW_t^0,~~Y_T^{m,*}=0,
\end{cases}
\end{align*}
where $\partial_qH$ (resp. $\partial_yH $) is the partial derivative of $H$ w.r.t. $q$ (resp. $y$). Consequently, we obtain \eqref{fbsde3.1} by substituting $
\tau^{m,*}$ into it. 
To study the well-posedness of FBSDE \eqref{fbsde3.1}, we make an ansatz that $Y_t^{m,*}=\xi_t(Q_t^{m,*}-m_t)+\vartheta_t m_t+\zeta_t$, where $t\mapsto\xi_t$ and $t\mapsto \vartheta_t$ are deterministic functions on $[0, T]$, and $\zeta=(\zeta_t)_{t\in [0,T]}$ is an $\mathbb{F}^0$-adapted process with form $d\zeta=\mu_t^{\zeta}dt+\sigma_t^{\zeta}dW_t^0$. 
For $t\in[0,T]$, denote by $\overline{Q}_t^{m, *}$ (resp. $\overline{Y}_t^{m,*}$) the conditional expectation of $Q_t^{m, *}$ (resp. $Y_t^{m, *}$) given $\mathcal{F}_t^0$, i.e.,
\begin{align}\label{overlineQ}
\overline{Q}_t^{m,*}:=\mathbb{E}[Q_t^{m,*}|\mathcal{F}_t^0], ~~\overline{Y}_t^{m,*}:=\mathbb{E}[Y_t^{m,*}|\mathcal{F}_t^0],~~\forall t\in [0,T] .
\end{align}
Then, it holds that $\overline{Y}_t^{m,*}=\xi_t(\overline{Q}_t^{m,*}-m_t)+\vartheta_t m_t+\zeta_t$ for $t\in[0,T]$.
Applying It\^o's rule to $Y_t^{*, m}$ and using the fact that $m_t=\mathbb{E}[Q_t^{m,*}|\mathcal{F}_t^0]=\overline{Q}_t^{m,*}$ in the equilibrium, we obtain
\begin{align*}
	dY_t^{m,*}%&=\dot{\xi}_t(Q_t^{m,*}-m_t)dt+\xi_td(Q_t^{m,*}-m_t)+\dot{\vartheta}_tm_tdt+\vartheta_tdm_t+d\zeta_t\\
	&=\left(\dot{\xi}_t-\alpha \xi_t+\frac{\beta^2}{2c}\xi_t^2\right)(Q_t^{m,*}-m_t)dt+\left(\dot{\vartheta}_t+\frac{\beta^2}{2c}\vartheta_t^2\right)m_tdt+\left(\mu_t^{\zeta}+\frac{\beta^2}{2c}\vartheta_t\zeta_t\right)dt\\
    &\quad+\sigma \xi_tdW_t+(\sigma_0\vartheta_t+\sigma_t^{\zeta})dW_t^0.
\end{align*}
Comparing with $dY_t^{*, m}=((\alpha \xi_t+b)(Q_t^{*, m}-m_t)- ap_t^{{\rm br}}+bm_t+\alpha \zeta_t+\alpha \vartheta_tm_t)dt+Z_t^{m,*}dW_t+Z_t^{m, *, 0}dW_t^0$, we deduce that $Z_t^{m, *}=\sigma \xi_t$, $Z_t^{m, *, 0}=\sigma_0\vartheta_t+\sigma_t^{\zeta}$, and 
\begin{align}\label{eq:zetalinearBSDE}
	\begin{cases}
		\displaystyle \frac{d\xi_t}{dt}=-\frac{\beta^2}{2c}\xi_t^2+2\alpha \xi_t+b,~~\xi_T=0,\\[0.6em]
		\displaystyle \frac{d{\vartheta}_t}{dt}=-\frac{\beta^2}{2c}\vartheta_t^2+\alpha \vartheta_t+b,~~\vartheta_T=0,\\[0.6em]
		\displaystyle d\zeta_t=\left(\left(\alpha-\frac{\beta^2}{2c}\xi_t\right)\zeta_t- ap_t^{{\rm br}}\right)dt+\sigma_t^{\zeta}dW_t^0,~~\zeta_T=0.
	\end{cases}
\end{align}
Since the Riccati's equations in \eqref{eq:zetalinearBSDE} have unique solutions given by \eqref{xi}, the linear BSDE is solvable. This implies that the coupled FBSDE \eqref{fbsde3.1} admits a unique pair of solutions. We then obtain an equivalent state feedback representation of the optimal control and the corresponding optimal trajectory as \eqref{fbsde3.2}.
\end{proof}	   
    
We then have the following result. 
\begin{lemma}\label{mfe}
Given $p^{{\rm br}}\in\mathbb{U}^{\rm br}$, let $(Q^{m,*},Y^{m,*},Z^{m,*})=(Q_t^{m,*},Y_t^{m,*},Z_t^{m,*})_{t\in[0,T]}$ depending on $p^{{\rm br}}\in\mathbb{U}^{\rm br}$ satisfy FBSDE \eqref{fbsde3.1}.
There exists a unique continuous process $m^*\in \mathbb{L}_{\mathbb{F}^0}^2([0,T])$ satisfying the consistency condition $m_t=\mathbb{E}[Q_t^{m, *}|\mathcal{F}_t^0]$ for $t\in [0, T]$, which also has the closed-form given by
\begin{align}\label{eq:closedformfixedm}
 m_t^* & = \exp\left( \frac{\beta^2}{2c} \int_0^t \vartheta_sds \right)\nonumber\\
 &\quad\times
 \left[ 
 q_0 
 + \frac{\beta^2}{2c} \int_0^t \zeta_s \, \exp\left( -\frac{\beta^2}{2c} \int_0^s \vartheta_u\, du \right) ds 
 + \sigma_0 \int_0^t \exp\left( -\frac{\beta^2}{2c} \int_0^s \vartheta_u \, du \right) \, dW^0_s 
 \right].
\end{align}
Moreover, $\tau^*(p^{{\rm br}}):=\frac{\beta}{2c}Y_t^{m^*,*}$ with   $Y^{m^*,*}$ given by \eqref{fbsde3.1} under $m^*$ is a MFE. It also admits a closed-form $\tau_t^*(p^{{\rm br}})=\frac{\beta^2}{2c}(\xi_tQ_t^{m^*,*}+(\vartheta_t-\xi_t)m_t^*+\zeta_t)$ for $t\in[0,T]$, where $(Q^{m,*}, \zeta)=(Q_t^{m,*},\zeta_t)_{t\in[0,T]}$ is given by \eqref{fbsde3.2}. 
\end{lemma}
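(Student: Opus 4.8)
The plan is to convert the consistency fixed-point condition into an explicit scalar linear SDE for $m$, which is then uniquely solvable and admits a closed-form solution; because the model is linear in the state, no contraction or Schauder-type argument is needed. As a preliminary step I would record that the pair $(\zeta,\sigma^{\zeta})$ appearing in \eqref{fbsde3.2} solves a \emph{linear BSDE whose coefficients do not involve $m$}: the drift is governed only by the deterministic, bounded, continuous function $\xi$ from \eqref{xi} together with the driver $-a p^{\mathrm{br}}\in\mathbb{L}^2_{\mathbb{F}^0}([0,T])$. Standard linear BSDE theory therefore yields a unique solution $(\zeta,\sigma^{\zeta})\in\mathbb{L}^2_{\mathbb{F}^0}([0,T])\times\mathbb{L}^2_{\mathbb{F}^0}([0,T])$ with $\zeta$ continuous, and this $\zeta$ is the same object for every candidate mean field $m$.

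Next I would derive the fixed-point equation. For any continuous $m\in\mathbb{L}^2_{\mathbb{F}^0}([0,T])$, Lemma~\ref{response} makes the optimal state $Q^{m,*}$ solve the forward SDE in \eqref{fbsde3.2}. Taking $\mathbb{E}[\,\cdot\mid\mathcal{F}^0_t]$ there and using that $W\perp W^0$ (so the $W$-martingale part has zero $\mathcal{F}^0_t$-conditional expectation while $\sigma_0\!\int_0^\cdot dW^0$ is already $\mathbb{F}^0$-adapted), I obtain, with $\overline{Q}^{m,*}_t:=\mathbb{E}[Q^{m,*}_t\mid\mathcal{F}^0_t]$,
\[
d\overline{Q}^{m,*}_t=\Big[\big(\alpha-\tfrac{\beta^2}{2c}\xi_t\big)\big(m_t-\overline{Q}^{m,*}_t\big)+\tfrac{\beta^2}{2c}\big(\vartheta_t m_t+\zeta_t\big)\Big]dt+\sigma_0\,dW^0_t,\qquad \overline{Q}^{m,*}_0=q_0 .
\]
Imposing the equilibrium identity $m_t=\overline{Q}^{m,*}_t$ cancels the mean-reversion term and leaves the scalar linear SDE
\[
dm_t=\tfrac{\beta^2}{2c}\big(\vartheta_t m_t+\zeta_t\big)dt+\sigma_0\,dW^0_t,\qquad m_0=q_0 .
\]

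I would then conclude existence, uniqueness and the closed form. Since $\vartheta$ is bounded continuous and $\zeta\in\mathbb{L}^2_{\mathbb{F}^0}([0,T])$, the last SDE has a unique continuous $\mathbb{F}^0$-adapted solution $m^*\in\mathbb{L}^2_{\mathbb{F}^0}([0,T])$ (square-integrability follows from boundedness of the integrating factor plus $\zeta\in\mathbb{L}^2$). Any $m$ satisfying the consistency condition solves this SDE, hence equals $m^*$: uniqueness. Conversely, taking $m=m^*$, Lemma~\ref{response} produces $(Q^{m^*,*},Y^{m^*,*},Z^{m^*,*})$ solving \eqref{fbsde3.1}; subtracting the SDE for $m^*$ from the $\mathcal{F}^0_t$-conditional projection of the forward SDE in \eqref{fbsde3.2} shows $v_t:=\overline{Q}^{m^*,*}_t-m^*_t$ obeys $dv_t=-(\alpha-\tfrac{\beta^2}{2c}\xi_t)v_t\,dt$, $v_0=0$, so $v\equiv0$ and the consistency condition indeed holds: existence. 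Variation of constants with integrating factor $\exp\!\big(-\tfrac{\beta^2}{2c}\!\int_0^t\vartheta_s\,ds\big)$ then gives exactly \eqref{eq:closedformfixedm}. Finally, by Lemma~\ref{response}, $\tau^*(p^{\mathrm{br}})=\tfrac{\beta}{2c}Y^{m^*,*}$ lies in $\mathbb{U}^{\mathrm{ad}}$ and is optimal for the representative problem \eqref{mfg1}--\eqref{mfg2} with $m=m^*$, and it is a best response to itself because $m^*_t=\mathbb{E}[Q^{m^*,*}_t\mid\mathcal{F}^0_t]$; hence it is a MFE in the sense of Definition~\ref{def}, with feedback representation that of Lemma~\ref{response} evaluated at $m=m^*$ and with $(Q^{m^*,*},\zeta)$ solving \eqref{fbsde3.2}.

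The step I expect to be the main obstacle is the conditional-expectation manipulation: one must justify that $t\mapsto\mathbb{E}[Q^{m,*}_t\mid\mathcal{F}^0_t]$ is itself an It\^o process with the dynamics displayed above, which rests on the independence $W\perp W^0$ together with a conditional Fubini argument for the drift and the stochastic integrals, and on $\mathbb{E}[Q^{m,*}_s\mid\mathcal{F}^0_t]=\mathbb{E}[Q^{m,*}_s\mid\mathcal{F}^0_s]$ for $s\le t$ (valid because $Q^{m,*}_s$ is $\mathcal{F}_s$-measurable and the future increments of $W^0$ are independent of $\mathcal{F}_s$). A secondary point to keep straight is that the feedback form \eqref{fbsde3.2} of Lemma~\ref{response} is valid only along the equilibrium where $m_t=\overline{Q}^{m,*}_t$, so it must be invoked only in that regime — which is precisely how both the uniqueness and the verification arguments above use it. Everything else reduces to routine scalar linear-SDE computations.
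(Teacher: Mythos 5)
Your proposal is correct and follows essentially the same route as the paper: both reduce the consistency condition, via the decoupling ansatz of Lemma~\ref{response}, to the scalar linear SDE $dm_t=\frac{\beta^2}{2c}(\vartheta_t m_t+\zeta_t)\,dt+\sigma_0\,dW_t^0$, $m_0=q_0$ (using that $\zeta$ solves a BSDE independent of $m$), and then solve it by variation of constants to obtain \eqref{eq:closedformfixedm}. Your explicit verification step---showing $v_t:=\overline{Q}^{m^*,*}_t-m^*_t$ obeys a homogeneous linear ODE with $v_0=0$, hence the consistency condition actually holds at $m^*$---is a small addition that the paper leaves implicit, but it does not change the substance of the argument.
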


\begin{proof}
Note that the consistency condition $m_t=\mathbb{E}[Q_t^{m, *}|\mathcal{F}_t^0]$ becomes that, for $t\in[0,T]$,
\begin{align}\label{eq:consistencemm}
m_t =\overline{Q}_t^{m,*}&=q_0+\sigma_0W_t^{0} + \frac{\beta^2}{2c}\int_0^t\overline{Y}_s^{m, *}ds= q_0+\sigma_0W_t^{0} + \frac{\beta^2}{2c}\int_0^t\left(\xi_s(\overline{Q}_s^{m, *}-m_s)+\vartheta_sm_s+\zeta_s\right)ds\nonumber\\
&= q_0+\sigma_0W_t^{0} + \frac{\beta^2}{2c}\int_0^t\left(\vartheta_sm_s+\zeta_s\right)ds,
\end{align}
where we recall that $\overline{Y}^{m,*}=(\overline{Y}_t^{m,*})_{t\in[0,T]}$, $\overline{Q}^{m,*}=(\overline{Q}_t^{m,*})_{t\in[0,T]}$ and $(\xi,\vartheta,\zeta)=(\xi_t,\vartheta_t,\zeta_t)_{t\in[0,T]}$ are respectively given by \eqref{overlineQ}, \eqref{xi} and \eqref{fbsde3.2}. Then, Eq.~\eqref{eq:consistencemm} satisfied by $m=(m_t)_{t\in[0,T]}$ can be written equivalently as
\begin{align}\label{eq:SDEmtW0}
dm_t=\frac{\beta^2}{2c}\left(\vartheta_tm_t+\zeta_t\right)dt+\sigma_0dW_t^0, ~~m_0=q_0.   
\end{align}
Since $\xi$ is bounded on $[0, T]$ and $p^{{\rm br}}\in \mathbb{U}^{{\rm br}}$, the standard theory of linear BSDE yields that the linear BSDE \eqref{eq:zetalinearBSDE} has a unique solution $(\zeta, \sigma^{\zeta})=(\zeta_t, \sigma_t^{\zeta})_{t\in[0,T]}$ which is square-integrable. Obviously, $\vartheta$ is bounded on $[0,T]$ by virtue of \eqref{eq:zetalinearBSDE}. Therefore, the linear SDE \eqref{eq:SDEmtW0} admits the following explicit strong solution given by \eqref{eq:closedformfixedm}. Moreover, the explicit formula yields $\mathbb{E}[\sup_{t\in [0,T]}|m_t|^2]<\infty$, and hence $m\in \mathbb{L}_{\mathbb{F}^0}^2([0,T])$.
\end{proof}

\subsection{Approximate Nash equilibrium}

In this subsection, we aim to construct an approximate Nash equilibrium for $n$ sellers when $n$ is large. Based on the MFE obtained in subsection \ref{sec:MFE}, we define the following adjustment strategy of sellers by, for $i=1, \ldots, n$,
	\begin{align}\label{nop}
		\tau_t^{*, i}(p^{{\rm br}})=\frac{\beta}{2c}\hat{Y}_t^{*, i},\quad\forall t \in [0, T],
	\end{align}
where $(\hat{Q}^{*,i},\hat{Y}^{*,i},Z^{*,i},Z^{*,0,i})=(\hat{Q}_t^{*,i},\hat{Y}_t^{*,i},Z_t^{*,i},Z_t^{*,0,i})_{t\in[0,T]}$ satisfies the following FBSDE:
	\begin{align}\label{audynamic}
		\begin{cases}
\displaystyle d\hat{Q}_t^{*, i}=\left(\alpha (m_t^*-\hat{Q}_t^{*, i})+\frac{\beta ^2}{2c }\hat{Y}_t^{*, i}\right)dt+\sigma dW_t^i+\sigma_0 dW_t^0,  \quad \hat{Q}_0^{*,i}=q_0,\\[0.8em]
\displaystyle d\hat{Y}_t^{*, i}=\left(b \hat{Q}_t^{*, i}+\alpha  \hat{Y}_t^{*, i}- a p_t^{{\rm br}}\right)dt+Z_t^{*,i}dW_t^i+Z_t^{*,0,i}dW_t^0,\quad  \hat{Y}_T^{*, i}=0
		\end{cases}
\end{align} 
with $m^*=(m_t^{*})_{t\in[0,T]}$ being the fixed point given by \eqref{eq:closedformfixedm} in Lemma \autoref{mfe}. For $i=1,\ldots, n$, the quality dynamics $Q_t^{*,i} =(Q^{*,i})_{t\in[0,T]}$ under the strategy $\tau^{*,i}(p^{\rm br})$ defined by \eqref{nop} is
\begin{align}\label{ndynamic}
dQ_t^{*,i}=\left(\alpha\left(\overline{Q}_t^{*, (n)}-Q_t^{*,i}\right)+\frac{\beta ^2}{2c }\hat{Y}_t^{*, i}\right)dt+\sigma  dW_t^i+\sigma_0 dW_t^0, \quad  Q_0^{*,i}=q_0, 
\end{align}
where $\overline{Q}_t^{*, (n)}:=\frac{1}{n}\sum_{i=1}^n Q_t^{*, i}$ for $t\in[0,T]$. Then, we have
\begin{proposition}[Approximate Nash equilibrium for $n$ sellers]\label{ANE}
Given an arbitrary strategy $p^{\rm br} \in \mathbb{U}^{\rm br}$ of the broker, consider the $n$-player game problem \eqref{npg} for sellers. Then, the vector of strategy $\boldsymbol{\tau}^*(p^{{\rm br}})=(\tau^{*,1}(p^{{\rm br}}), \ldots, \tau^{*,n}(p^{{\rm br}}))$ with $\tau^{*,i}(p^{\rm br})$ being defined by \eqref{nop} is an $\epsilon_1=O(n^{-1})$-Nash equilibrium, i.e., for all $i=1,\ldots,n$, 
\begin{align*}
J_i^F\left(p^{{\rm br}},\boldsymbol{\tau}^*(p^{{\rm br}})\right)+\epsilon_1\geq \sup_{\tau^i\in \mathbb{U}^{\rm ad}}J_i^F\left(p^{{\rm br}},\boldsymbol{\tau}^{*,-i}(p^{{\rm br}}), \tau^i\right).    
\end{align*}
\end{proposition}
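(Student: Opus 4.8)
The plan is a decoupling (propagation-of-chaos) argument: I would bound both the equilibrium payoff $J_i^F(p^{\rm br},\boldsymbol\tau^*(p^{\rm br}))$ and the best deviation payoff $\sup_{\tau^i\in\mathbb U^{\rm ad}}J_i^F(p^{\rm br},\boldsymbol\tau^{*,-i}(p^{\rm br}),\tau^i)$ within $O(n^{-1})$ of the optimal value $V^*:=\tilde J^F(p^{\rm br},m^*;\tau^*)$ of the representative problem \eqref{mfg1}--\eqref{mfg2} (attained at the MFE by Lemmas \ref{response}--\ref{mfe}), and then subtract the two estimates. Write $\Phi(\tau;m)$ for the cost in \eqref{mfg2} evaluated at control $\tau$ and a generic square-integrable input $m$ (the representative's idiosyncratic noise being taken as $W^i$), so that $J_i^F(p^{\rm br},\boldsymbol\tau^*)=\Phi(\tau^{*,i};\overline Q^{*,(n)})$ with $\overline Q^{*,(n)}$ the empirical mean of \eqref{ndynamic}, and $J_i^F(p^{\rm br},\boldsymbol\tau^{*,-i},\tau^i)=\Phi(\tau^i;\overline Q^{(n)})$ with $\overline Q^{(n)}$ the empirical mean generated by $(\boldsymbol\tau^{*,-i},\tau^i)$. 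Three preliminary facts drive the argument: (a) since $W^1,\dots,W^n$ are i.i.d.\ and independent of $W^0$, the auxiliary pairs $(\hat Q^{*,j},\hat Y^{*,j})$ from \eqref{audynamic} are conditionally i.i.d.\ given $\mathcal F^0$, each adapted to $\sigma(W^j,W^0)$ (by the feedback representation in Lemma \ref{mfe}) and satisfying $\mathbb E[\hat Q^{*,j}_t\mid\mathcal F^0_t]=m^*_t$ by the consistency condition \eqref{eq:consistencemm}; a conditional law of large numbers then gives $\|\tfrac1n\sum_j\hat Q^{*,j}-m^*\|_{2,T}^2=O(n^{-1})$, and subtracting \eqref{audynamic} from \eqref{ndynamic} plus Gr\"onwall yields $\|\overline Q^{*,(n)}-m^*\|_{2,T}^2=O(n^{-1})$ and $\sup_i\|Q^{*,i}-\hat Q^{*,i}\|_{2,T}^2=O(n^{-1})$; (b) since \eqref{eq:JiF} carries the strictly concave penalty $-c\int_0^T(\tau^i_t)^2\,dt$ and $Q^i$ grows at most linearly in $\|\tau^i\|_{2,T}$, any deviation with $\|\tau^i\|_{2,T}$ beyond a constant $K$ (independent of $n$) is strictly worse than $\tau^{*,i}$, so one may restrict to $\|\tau^i\|_{2,T}\le K$; (c) such a deviation perturbs the empirical mean only by $\overline Q^{(n)}_t-\overline Q^{*,(n)}_t=\tfrac{\beta}{n}\int_0^t(\tau^i_s-\tau^{*,i}_s)\,ds$, of order $n^{-1}\|\tau^i-\tau^{*,i}\|_{2,T}$ in $\|\cdot\|_{2,T}$.

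Because the state equation is linear, $Q^{m,\tau}_t=\Psi^\tau_t+\Lambda_t[m]$ with $\Lambda_t[m]:=\alpha\int_0^te^{-\alpha(t-s)}m_s\,ds$, where $\Psi^\tau$ is affine in $\tau$ and independent of $m$, and $\Psi^{\tau^{*,i}}$ is $\sigma(W^i,W^0)$-measurable. The quadratic-linear structure of the running cost then gives, for inputs $m,m'$ and control $\tau$,
\begin{equation*}
\Phi(\tau;m)-\Phi(\tau;m')=\mathbb E\!\int_0^T(ap^{\rm br}_t-b\Psi^\tau_t)\,\Lambda_t[m-m']\,dt-\frac b2\,\mathbb E\!\int_0^T\bigl(\Lambda_t[m]^2-\Lambda_t[m']^2\bigr)\,dt .
\end{equation*}
For the equilibrium side I apply this with $\tau=\tau^{*,i}$, $m=\overline Q^{*,(n)}$, $m'=m^*$: the quadratic term is $O(n^{-1})$ by (a), and in the linear term $\Lambda_t[\overline Q^{*,(n)}-m^*]$ is, up to a deterministic kernel, an average of the $\hat Q^{*,j}_u-m^*_u$; each $j\ne i$ term has zero conditional mean given $\sigma(W^i,W^0)$, against which $ap^{\rm br}_t-b\Psi^{\tau^{*,i}}_t$ is measurable, so only the $j=i$ term survives and contributes $O(n^{-1})$. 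Hence $J_i^F(p^{\rm br},\boldsymbol\tau^*)=V^*+O(n^{-1})$, in particular $\ge V^*-O(n^{-1})$.

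For the deviation side, take $\tau^i$ with $\|\tau^i\|_{2,T}\le K$ and set $\rho:=\|\tau^i-\tau^{*,i}\|_{2,T}$. Applying the displayed identity with $\tau=\tau^i$, $m=\overline Q^{(n)}$, $m'=m^*$, the quadratic term is $O(n^{-1})$ (using (a), (c), and the vanishing of $\mathbb E\int_0^T\Lambda_t[m^*]\Lambda_t[\overline Q^{*,(n)}-m^*]\,dt$, which follows since $\Lambda_t[m^*]$ is $\sigma(W^0)$-measurable). Splitting $\Psi^{\tau^i}=\Psi^{\tau^{*,i}}+\Psi^{\tau^i-\tau^{*,i}}_{\mathrm{lin}}$ (the control-dependence being affine) and $\overline Q^{(n)}-m^*=(\overline Q^{(n)}-\overline Q^{*,(n)})+(\overline Q^{*,(n)}-m^*)$, the linear term is $O(n^{-1})$ from the $\Psi^{\tau^{*,i}}$-against-$(\overline Q^{*,(n)}-m^*)$ cancellation above together with the $O(n^{-1})$ size of $\overline Q^{(n)}-\overline Q^{*,(n)}$, plus a remaining piece bounded via Cauchy--Schwarz by $C\rho\,n^{-1/2}$ (the $\Psi^{\tau^i-\tau^{*,i}}_{\mathrm{lin}}$-against-$(\overline Q^{*,(n)}-m^*)$ term). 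Thus $\Phi(\tau^i;\overline Q^{(n)})\le\Phi(\tau^i;m^*)+C_1n^{-1}+C_2\rho\,n^{-1/2}$. Since $\Phi(\cdot;m^*)$ is $2c$-strongly concave (from $-c\|\tau\|_{2,T}^2$) with maximizer $\tau^{*,i}$ over the linear space $\mathbb U^{\rm ad}$, we have $\Phi(\tau^i;m^*)\le V^*-c\rho^2$; adding the two bounds and using $-c\rho^2+C_2\rho\,n^{-1/2}\le C_2^2/(4cn)$ gives $J_i^F(p^{\rm br},\boldsymbol\tau^{*,-i},\tau^i)\le V^*+O(n^{-1})$ uniformly in $\tau^i$. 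Taking the supremum and combining with the equilibrium-side bound yields $\sup_{\tau^i}J_i^F(p^{\rm br},\boldsymbol\tau^{*,-i},\tau^i)\le V^*+O(n^{-1})\le J_i^F(p^{\rm br},\boldsymbol\tau^*(p^{\rm br}))+O(n^{-1})$, i.e.\ $\epsilon_1=O(n^{-1})$.

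I expect the deviation side to be the main obstacle. A deviating seller may use the full filtration $\mathbb F$, so without care the linear cross-term $\mathbb E\int_0^T(ap^{\rm br}_t-b\Psi^{\tau^i}_t)\Lambda_t[\overline Q^{(n)}-m^*]\,dt$ can only be bounded by $O(n^{-1/2})$; the sharper $O(n^{-1})$ needs the combination of the conditional-independence cancellation (available only for the non-deviating controls, adapted to their own and the common noise) with the strong-concavity trade-off forbidding a deviation from being simultaneously near-optimal for the limit problem and large enough to exploit the $O(n^{-1/2})$ empirical fluctuation. The remaining work — the conditional propagation-of-chaos estimate in (a), the a priori bounds in (b)--(c), and the $n$-uniformity of all constants — consists of standard Gr\"onwall and moment computations.
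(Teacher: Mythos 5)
Your argument is correct in outline, but it takes a genuinely different route from the paper, and the difference matters. The paper's proof restricts to $\|\tau^i\|_{2,T}\leq C_3$ (as you do in step (b)), then inserts the auxiliary frozen-mean-field problem \eqref{auxproblem} and bounds the two differences in \eqref{main} by Cauchy--Schwarz against the $L^2$-estimates $\mathbb{E}[\sup_t|\check Q^i_t-\hat Q^i_t|^2]=O(n^{-1})$ and $\mathbb{E}[\sup_t|\hat Q^{*,i}_t-Q^{*,i}_t|^2]=O(n^{-1})$ from Lemma~\ref{lemma3.1}; as \eqref{result1}--\eqref{result2} show, this only delivers $\epsilon_1=O(n^{-1/2})$, which does not match the $O(n^{-1})$ rate announced in the statement. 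Your proof exploits the LQ structure more aggressively: the exact quadratic expansion of the payoff in the mean-field input, the conditional-independence cancellation of the first-order fluctuation term against $\sigma(W^i,W^0)$-measurable integrands, and the strong-concavity/Young trade-off $-c\rho^2+C_2\rho n^{-1/2}\leq C_2^2/(4cn)$ to absorb the only surviving $O(\rho n^{-1/2})$ cross-term coming from the deviator's full-information control. This is precisely the kind of refinement needed to obtain the stated $O(n^{-1})$, so your approach actually proves the proposition as written, whereas the paper's proof, taken literally, proves a weaker rate.

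One step deserves more care than your parenthetical suggests. The cancellation in the linear term requires $\Lambda_t[\overline Q^{*,(n)}-m^*]$ to be an average of terms that are conditionally centered given $\mathcal{F}^0$ and conditionally independent across $j$. But $\overline Q^{*,(n)}$ is built from the coupled states $Q^{*,j}$ of \eqref{ndynamic}, not the decoupled $\hat Q^{*,j}$ of \eqref{audynamic}, and replacing one by the other crudely reintroduces $O(n^{-1/2})$ errors. The fix is to average the dynamics: subtracting the averaged versions of \eqref{ndynamic} and \eqref{audynamic} gives
\begin{align*}
\overline Q^{*,(n)}_t-\tfrac1n\textstyle\sum_j\hat Q^{*,j}_t=\alpha\int_0^t\Big(\tfrac1n\textstyle\sum_j\hat Q^{*,j}_s-m^*_s\Big)ds,
\end{align*}
so that $\overline Q^{*,(n)}-m^*$ is itself a deterministic linear functional of $\tfrac1n\sum_j(\hat Q^{*,j}-m^*)$, i.e.\ of the form $\tfrac1n\sum_j G^j$ with each $G^j$ a functional of $(W^j,W^0)$ that is conditionally centered given $\mathcal{F}^0$. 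With that observation in place, the $j\neq i$ terms vanish in expectation against $ap^{\rm br}-b\Psi^{\tau^{*,i}}$ and against the $\mathcal{F}^0$-measurable $\Lambda_t[m^*]$, and the rest of your estimates (the $O(\rho/n)$ contribution of the deviation to the empirical mean, the a priori bound on $\|\tau^i\|_{2,T}$, and the uniformity of constants) are routine. Spelling this out would make the argument complete.
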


Before proving Proposition \ref{ANE}, we need the following auxiliary result. Its proof is deferred to Appendix \ref{sec:appendix} for readability.

\begin{lemma}\label{lemma3.1}
Let $ p^{{\rm br}}\in\mathbb{U}^{\rm br}$. For $i=1,\ldots,n$, recall that $Q^{*,i}=(Q_t^{*,i})_{t\in[0,T]}$ is given by \eqref{ndynamic} and $m^*=(m_t^*)_{t\in[0,T]}\in\mathbb{L}_{\mathbb{F}^0}([0,T])$ is the fixed point given by \eqref{eq:closedformfixedm} in Lemma~\ref{mfe}. Then, there exists a constant $C>0$ independent of $(i,n)$ such that $\mathbb{E}[\sup_{t\in [0,T]}|Q_t^{*,i}|^2]\leq C$. Moreover, it holds that  
 \begin{align*}
\mathbb{E}\left[\sup_{t\in [0,T]}\left|\overline{Q}_t^{*,(n)}-m_t^*\right|^2\right]=O\left(n^{-1}\right).
\end{align*}
\end{lemma}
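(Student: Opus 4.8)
The plan is to first establish a uniform $L^2$-bound on the processes $(\hat Q^{*,i},\hat Y^{*,i})$ and then propagate it to $Q^{*,i}$, and finally to obtain the $O(n^{-1})$ rate for the average by a law-of-large-numbers argument exploiting the conditional independence of the idiosyncratic noises. For the first part, I note that $(\hat Q^{*,i},\hat Y^{*,i})$ solves the linear FBSDE \eqref{audynamic}, whose coefficients involve only the deterministic-once-$m^*$-is-fixed inputs $m^*$ and $p^{\rm br}$; since $m^*\in\mathbb{L}^2_{\mathbb{F}^0}([0,T])$ with $\mathbb{E}[\sup_t|m^*_t|^2]<\infty$ (by Lemma~\ref{mfe}) and $\|p^{\rm br}\|_{2,T}\le C_T$, standard linear-FBSDE estimates — equivalently, the state-feedback representation $\hat Y^{*,i}_t=\xi_t(\hat Q^{*,i}_t-m^*_t)+\vartheta_t m^*_t+\zeta_t$ with $\xi,\vartheta$ bounded and $\zeta\in\mathbb{L}^2_{\mathbb{F}^0}$ — yield a constant $C$ independent of $(i,n)$ with $\mathbb{E}[\sup_t(|\hat Q^{*,i}_t|^2+|\hat Y^{*,i}_t|^2)]\le C$. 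Here it is important that all $i$ see the \emph{same} $m^*$, so the bound is genuinely $i$-independent; the only $i$-dependence is through the driving BM $W^i$, which enters via an isometry term that is uniformly bounded.

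Next I would control $Q^{*,i}$ given by \eqref{ndynamic}. Writing $\overline Q^{*,(n)}=\frac1n\sum_i Q^{*,i}$ and averaging \eqref{ndynamic} over $i$, the mean-reversion terms telescope: $\alpha(\overline Q^{*,(n)}_t-Q^{*,i}_t)$ averages to zero, so $\overline Q^{*,(n)}$ satisfies $d\overline Q^{*,(n)}_t=(\frac{\beta^2}{2c}\frac1n\sum_i\hat Y^{*,i}_t)\,dt+\frac{\sigma}{n}\sum_i dW^i_t+\sigma_0\,dW^0_t$ with $\overline Q^{*,(n)}_0=q_0$. Combined with the bound $\mathbb{E}[\sup_t|\hat Y^{*,i}_t|^2]\le C$ and Grönwall applied to \eqref{ndynamic} itself (treating $\overline Q^{*,(n)}$ as an input), a routine BDG-plus-Grönwall estimate gives $\mathbb{E}[\sup_t|Q^{*,i}_t|^2]\le C$ uniformly in $(i,n)$, which is the first assertion.

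For the rate, the key observation is that $\hat Q^{*,i}$ is precisely the representative-seller state \eqref{fbsde3.2} driven by $W^i$ in place of $W$ — i.e., the $\hat Q^{*,i}$ are, conditionally on $\mathcal{F}^0_T$, i.i.d. copies with conditional mean $m^*_t=\mathbb{E}[\hat Q^{*,i}_t\mid\mathcal{F}^0_t]$. Hence $\mathbb{E}[\sup_t|\frac1n\sum_i\hat Q^{*,i}_t-m^*_t|^2]=O(n^{-1})$ by a conditional $L^2$ law of large numbers (the cross terms $\mathbb{E}[(\hat Q^{*,i}_t-m^*_t)(\hat Q^{*,j}_t-m^*_t)]=0$ for $i\ne j$ using conditional independence and $\mathbb{F}^0$-measurability of $m^*$, after an $L^2$-maximal bound to handle the $\sup_t$). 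It then remains to show $\mathbb{E}[\sup_t|\overline Q^{*,(n)}_t-\frac1n\sum_i\hat Q^{*,i}_t|^2]=O(n^{-1})$: subtracting the two averaged SDEs, the difference $D^{(n)}_t:=\overline Q^{*,(n)}_t-\frac1n\sum_i\hat Q^{*,i}_t$ solves an SDE driven by $\frac{\beta^2}{2c}\frac1n\sum_i(\hat Y^{*,i}-\hat Y^{*,i})=0$ in the $\hat Y$-part but with the extra term $-\alpha D^{(n)}_t$ from $\frac1n\sum_i\alpha(\overline Q^{*,(n)}_t-Q^{*,i}_t)=0$ versus the mean-reversion in $\hat Q^{*,i}$ which reads $\alpha(m^*_t-\hat Q^{*,i}_t)$; averaging the latter gives $\alpha(m^*_t-\frac1n\sum_i\hat Q^{*,i}_t)$, which is $O(n^{-1/2})$ in $L^2$ by the LLN step just proved — so Grönwall closes the estimate at rate $n^{-1}$. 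The main obstacle is organizing this last comparison cleanly: one must be careful that the mean-reversion couplings in \eqref{ndynamic} and \eqref{audynamic} are structurally different ($\overline Q^{*,(n)}$ vs.\ $m^*$), so the natural approach is to bound $\mathbb{E}[\sup_t|Q^{*,i}_t-\hat Q^{*,i}_t|^2]$ directly by Grönwall — its SDE has drift $\alpha(\overline Q^{*,(n)}_t-m^*_t)-\alpha(Q^{*,i}_t-\hat Q^{*,i}_t)+\frac{\beta^2}{2c}(\hat Y^{\text{(new)}}-\hat Y^{*,i})$ — and to feed in the already-established $O(n^{-1})$ bound on $\mathbb{E}[\sup_t|\overline Q^{*,(n)}_t-m^*_t|^2]$; but that bound is exactly what we are proving, so one instead runs the two estimates simultaneously, or more simply derives a self-contained fixed-point/Grönwall inequality for $\mathbb{E}[\sup_t|\overline Q^{*,(n)}_t-m^*_t|^2]$ in which the $\frac1n\sum_i(\hat Q^{*,i}_t-m^*_t)$ term supplies the $O(n^{-1})$ source and the $-\alpha$ drift makes it dissipative, yielding the claim.
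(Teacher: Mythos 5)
Your plan is correct and follows essentially the same route as the paper's proof: uniform FBSDE estimates for $(\hat Q^{*,i},\hat Y^{*,i})$, the observation that the mean-reversion term telescopes when averaging \eqref{ndynamic}, the identification $\mathbb{E}[\hat Q_t^{*,i}\mid\mathcal F_t^0]=m_t^*$ via the decoupled representation, and a conditional law of large numbers delivering the $O(n^{-1})$ rate. The only (cosmetic) difference is that you apply the conditional LLN to the forward components $\hat Q^{*,i}$ and insert $\tfrac1n\sum_i\hat Q^{*,i}$ as an intermediate term, whereas the paper applies it to the backward components $\hat Y^{*,i}$ and compares $\overline Q^{*,(n)}$ with $m^*$ directly through their SDEs; both close the estimate the same way.
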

    
Now, we are in a position to prove Proposition \ref{ANE}.
\begin{proof}[Proof of Proposition \ref{ANE}]
By employing the inequality $2xy\leq \frac{1}{\varepsilon} x^2+\varepsilon y^2$ for any $\varepsilon >0$, we have
\begin{align}\label{eq}
J_i^F(p^{{\rm br}},\boldsymbol{\tau}^{*,-i}(p^{{\rm br}}), \tau^i)%&=\mathbb{E}\left[\int_0^T\left(p_t^{{\rm br}}a \check{Q}_t^i-\frac{b}{2}(\check{Q}_t^i)^2-c  (\tau_t^i)^2\right)dt\right]\nonumber\\
\leq \varepsilon\mathbb{E}\left[\int_0^T|p_t^{{\rm br}}|^2dt\right]+\left(\frac{1}{4\varepsilon}-\frac{b  }{2}\right)\mathbb{E}\left[\int_0^T|\check{Q}_t^i|^2dt\right]-c  \mathbb{E}\left[\int_0^T|\tau_t^i|^2dt\right],
\end{align}
where, for $i=1,\ldots,n$, the process $\check{Q}^i=(\check{Q}_t^i)_{t\in[0,T]}$ satisfies that
\begin{equation}\label{eq:checkQ}
\left\{
\begin{aligned}
   d \check{Q}_t^i &=\left[\alpha \left(\frac{1}{n}\sum_{k=1}^n\check{Q}_t^k-\check{Q}_t^i\right)+\beta  \tau_t^i\right]dt+\sigma dW_t^i+\sigma_0dW_t^0,~~\check{Q}_0^i=q_0, \\[0.6em]
  d \check{Q}_t^j &=\left[\alpha\left(\frac{1}{n}\sum_{k=1}^n\check{Q}_t^k-\check{Q}_t^j\right)+\beta\tau_t^{*,j}\right]dt+\sigma d W_t^j+\sigma_0dW_t^0,~~\check{Q}_0^j=q_0,~~j\neq i.
\end{aligned}
\right.
\end{equation}
Then, by using the standard estimate argument, there exists a constant $C > 0$ depending on $T$ only such that, for $t\in[0,T]$,
\begin{align}\label{Qk}
\begin{cases}
	\displaystyle \mathbb{E}\left[|\check{Q}_t^i|^2\right]\leq C\left\{1+ \int_0^t\mathbb{E}\left[|\check{Q}_s^i|^2\right]ds+\int_0^t\frac{1}{n}\sum_{i=1}^k\mathbb{E}\left[|\check{Q}_s^k|^2\right]ds+\mathbb{E}\left[\int_0^T|\tau_t^i|^2dt\right]\right\},\\[0.6em]
	\displaystyle \mathbb{E}\left[|\check{Q}_t^j|^2\right]\leq C\left\{1+ \int_0^t\mathbb{E}\left[|\check{Q}_s^j|^2\right]ds+\int_0^t\frac{1}{n}\sum_{i=1}^k\mathbb{E}\left[|\check{Q}_s^k|^2\right]ds\right\},~~j\neq i.
\end{cases}
\end{align}
By averaging the above inequalities, we have
\begin{equation*}
	\frac{1}{n}\sum_{k=1}^n\mathbb{E}\left[|\check{Q}_t^k|^2\right]\leq C\left\{1+\frac{1}{n}\mathbb{E}\left[\int_0^t|\tau_s^i|^2ds\right]+\int_0^t\frac{1}{n}\sum_{k=1}^n\mathbb{E}\left[|\check{Q}_s^k|^2\right]ds\right\}.
\end{equation*}
Using the Gronwall’s lemma and the estimate \eqref{Qk}, we deduce $\sup_{t\in[0,T]}\mathbb{E}\left[|\check{Q}_t^i|^2\right]\leq C(1+\Vert\tau^i\Vert_{2,T}^2)$. Substituting this estimate into \eqref{eq}, it yields that 
\begin{align*}
J_i^F\left(p^{{\rm br}},\boldsymbol{\tau}^{*,-i}(p^{{\rm br}}), \tau^i\right)\leq C_1-\left(c-\frac{CT}{4\varepsilon}+\frac{b CT}{2}\right)\left\|\tau^i\right\|_{2,T}^2
\end{align*}
with $C_1:=\varepsilon \Vert p^{{\rm br}}\Vert_{2, T}^2+\frac{CT}{4\varepsilon}-\frac{bCT}{2}$. Hence, 
we may take $\varepsilon>0$ large enough such that $c-\frac{CT}{4\varepsilon}+\frac{bCT}{2}>0$. Then, for $\tau^i\in \mathbb{U}^{\rm ad}$, if there exists a constant $C_2$ such that $J_i^F(p^{{\rm br}}, \boldsymbol{\tau}^{*,-i}, \tau^i)\geq C_2$, consequently $\Vert \tau^i \Vert_{2,T}^2=\mathbb{E}[\int_0^T|\tau_t^i|^2dt]\leq \frac{C_1-C_2}{c-\frac{CT}{4\varepsilon}+\frac{bCT}{2}}=:C_3$. Therefore, it suffices to prove the proposition under $\Vert\tau^i\Vert_{2,T}^2\leq C_3$ for some positive $C_3$ depends on $T>0$ only. To this purpose, let us introduce the following auxiliary control problem given by
\begin{align}\label{auxproblem}
\begin{cases}
\displaystyle \sup_{\tau^i\in \mathbb{U}^{\rm ad}}\tilde{J}_i^F(p^{{\rm br}}, \tau^i)=\sup_{\tau^i\in \mathbb{U}^{\rm ad}}\mathbb{E}\left[\int_0^T\left(ap_t^{{\rm br}}\hat{Q}_t^i-\frac{b}{2}(\hat{Q}_t^i)^2-c(\tau_t^i)^2\right)dt\right],	\\[0.6em]
\displaystyle d \hat{Q}_t^i=\left(\alpha(m_t^*-\hat{Q}_t^i)+\beta\tau_t^i\right)dt+\sigma d W_t^i+\sigma_0dW_t^0,~~ \hat{Q}_0^i=q_0.		
\end{cases}
\end{align}
Following the similar argument used in the proof of Lemma \ref{response}, we obtain the optimal strategy $\hat{\tau}_t^{*,i}=\tau_t^{*,i}$ for $t\in [0, T]$. Then, we have 
\begin{align}\label{main}
&\sup_{\tau^i\in \mathbb{U}^{\rm ad}}J_i^F\left(p^{{\rm br}},\boldsymbol{\tau}^{*,-i}(p^{{\rm br}}), \tau^i\right)-J_i^F\left(p^{{\rm br}},\boldsymbol{\tau}^*(p^{{\rm br}})\right)\leq \sup_{\tau^i\in \mathbb{U}^{\rm ad}}\left( J_i^F(p^{{\rm br}},\boldsymbol{\tau}^{*,-i}(p^{{\rm br}}), \tau^i)-\tilde{J}_i^F(p^{{\rm br}}, \tau^i)\right)\nonumber\\
&\qquad\qquad+\left(\tilde{J}_i^F(p^{{\rm br}}, \hat{\tau}^{*,i}(p^{{\rm br}}))-J_i^F(p^{{\rm br}},\boldsymbol{\tau}^*(p^{{\rm br}}))\right).
\end{align}
Let us consider the 1st term on RHS of \eqref{main}. Employing the Cauchy-Schwarz inequality and the inequality $(x-y)^2-x^2\leq y^2+2|x||y|$, one has
\begin{align}\label{term1}&J_i^F(p^{{\rm br}},\boldsymbol{\tau}^{*,-i}(p^{{\rm br}}), \tau^i)-\tilde{J}_i^F(p^{{\rm br}}, \tau^i)=\mathbb{E}\left[\int_0^T\left(ap_t^{{\rm br}}(\check{Q}_t^i-\hat{Q}_t^i)+\frac{b}{2}(\hat{Q}_t^i)^2-\frac{b}{2}(\check{Q}_t^i)^2\right)dt\right]\nonumber\\
&\quad\leq a\left\{\mathbb{E}\left[\int_0^T|p_t^{{\rm br}}|^2dt\right]\mathbb{E}\left[\int_0^T|\check{Q}_t^i-\hat{Q}_t^i|^2dt\right]\right\}^{\frac{1}{2}}\nonumber\\
&\qquad+\frac{b}{2}\left\{\mathbb{E}\left[\int_0^T|\check{Q}_t^i-\hat{Q}_t^i|^2dt\right]+2\mathbb{E}\left[\int_0^T|\check{Q}_t^i-\hat{Q}_t^i|^2dt\right]^{\frac{1}{2}}\mathbb{E}\left[\int_0^T|\hat{Q}_t^i|^2dt\right]^{\frac{1}{2}}\right\}.
\end{align}
We also have that 
\begin{align}\label{checkhatQ}
&\mathbb{E}\left[|\check{Q}_t^i-\hat{Q}_t^i|^2\right]\leq C\left\{ \int_0^t\mathbb{E}\left[|\check{Q}_s^i-\hat{Q}_s^i|^2\right]ds+\int_0^t\mathbb{E}\left[\left|\frac{1}{n}\sum_{k=1}^n\check{Q}_s^k-m_s^*\right|^2\right]ds\right\}\\
&\quad \leq C\left\{ \int_0^t\mathbb{E}\left[|\check{Q}_s^i-\hat{Q}_s^i|^2\right]ds+\frac{1}{n}\sum_{k=1}^n\int_0^t\mathbb{E}\left[|\check{Q}_s^k-Q_s^{*,k}|^2\right]ds+\int_0^t\mathbb{E}\left[|\overline{Q}_s^{*,(n)}-m_s^*|^2\right]ds\right\}.\nonumber
\end{align}
It follows from \eqref{ndynamic} and \eqref{eq:checkQ} that, for $i=1,\ldots,n$,
\begin{align*}
\begin{cases}
\displaystyle\mathbb{E}\left[|\check{Q}_t^i-Q_t^{*,i}|^2\right]	\leq C\left\{ \int_0^t\mathbb{E}\left[|\check{Q}_s^i-Q_s^{*,i}|^2\right]ds+\frac{1}{n}\sum_{k=1}^n\int_0^t\mathbb{E}\left[|\check{Q}_s^k-Q_s^{*,k}|^2\right]ds+\mathbb{E}\left[\int_0^t|\tau_s^{*,i}-\tau_s^i|^2ds\right]\right\},\\[0.6em]
\displaystyle\mathbb{E}\left[|\check{Q}_t^j-Q_t^{*,j}|^2\right]\leq C\left\{ \int_0^t\mathbb{E}\left[|\check{Q}_s^j-Q_s^{*,j}|^2\right]ds+\frac{1}{n}\sum_{k=1}^n\int_0^t\mathbb{E}\left[|\check{Q}_s^k-Q_s^{*,k}|^2\right]ds\right\},~~j\neq i.
\end{cases}
\end{align*}
By averaging the above inequalities, it results in that
\begin{align*}
&\frac{1}{n}\sum_{k=1}^n\mathbb{E}\left[|\check{Q}_t^k-Q_t^{*,k}|^2\right]\leq C\left\{\frac{1}{n}\mathbb{E}\left[\int_0^T|\tau_s^{*,i}(p^{{\rm br}})-\tau_s^i|^2ds\right]+\int_0^t\frac{1}{n}\sum_{k=1}^n\mathbb{E}\left[|\check{Q}_s^k-Q_s^{*,k}|^2\right]ds\right\}\nonumber\\
&\qquad\leq C\left\{\frac{1}{n}\mathbb{E}\left[\int_0^T(|\tau_s^{*,i}(p^{{\rm br}})|^2+|\tau_s^i|^2)ds\right]+\int_0^t\frac{1}{n}\sum_{k=1}^n\mathbb{E}\left[|\check{Q}_s^k-Q_s^{*,k}|^2\right]ds\right\}.
\end{align*}
The Gronwall's lemma yields that
$\frac{1}{n}\sum_{k=1}^n\sup_{t\in [0,T]}\mathbb{E}\left[\left|\check{Q}_t^k-Q_t^{*,k}\right|^2\right]=O\left(n^{-1}\right).$
This implies from \eqref{checkhatQ} that $\sup_{t\in [0,T]}\mathbb{E}\left[|\check{Q}_t^i-\hat{Q}_t^i|^2\right]=O(n^{-1})$. On the other hand, the condition $\Vert \tau^i\Vert_{2, T}^2\leq C_3$ leads to $\sup_{t\in [0,T]}\mathbb{E}[|\hat{Q}_t^i|^2]\leq C$ for some constant $C>0$ depending on $T$ only.  Thus, we obtain from \eqref{term1} that
\begin{align}\label{result1}
J_i^F\left(p^{{\rm br}},\boldsymbol{\tau}^{*,-i}(p^{{\rm br}}), \tau^i\right)-\tilde{J}_i^F\left(p^{{\rm br}}, \tau^i\right)\leq O\left(n^{-\frac{1}{2}}\right).
\end{align}
For the 2nd term on the RHS of \eqref{main}, we have
\begin{align*}
&\tilde{J}_i^F\left(p^{{\rm br}}, \hat{\tau}^{*,i}(p^{{\rm br}})\right)-J_i^F\left(p^{{\rm br}},\boldsymbol{\tau}^*(p^{{\rm br}})\right)=\mathbb{E}\left[\int_0^T\left(ap_t^{{\rm br}}(\hat{Q}_t^{*,i}-Q_t^{*,i})+\frac{b}{2}(Q_t^{*,i})^2-\frac{b}{2}(\hat{Q}_t^{*,i})^2\right)dt\right]\\
&\quad\leq\left\{a\mathbb{E}\left[\int_0^T|p_t^{{\rm br}}|^2dt\right]\mathbb{E}\left[\int_0^T|\hat{Q}_t^{*,i}-Q_t^{*,i}|^2dt\right]\right\}^{\frac{1}{2}}\\
&\qquad+\frac{b}{2}\left\{\mathbb{E}\left[\int_0^T|\hat{Q}_t^{*,i}-Q_t^{*,i}|^2dt\right]+2\mathbb{E}\left[\int_0^T|\hat{Q}_t^{*,i}-Q_t^{*,i}|^2dt\right]^{\frac{1}{2}}\mathbb{E}\left[\int_0^T|Q_t^{*,i}|^2dt\right]^{\frac{1}{2}}\right\}.
\end{align*}
From \eqref{audynamic} and \eqref{ndynamic}, it follows that 
\begin{align}\label{eqdifference}
\mathbb{E}\left[\left|\hat{Q}_t^{*,i}-Q_t^{*,i}\right|^2\right]\leq C\left\{\int_0^t\mathbb{E}\left[\left|\overline{Q}_s^{*,(n)}-m_s^*\right|^2\right]ds+\int_0^t\mathbb{E}\left[\left|\hat{Q}_s^{*,i}-Q_s^{*,i}\right|^2\right]ds\right\}.
\end{align}
Under Lemma \ref{lemma3.1}, we can deduce
$\sup_{t\in [0,T]}\mathbb{E}\left[|\hat{Q}_t^{*,i}-Q_t^{*,i}|^2\right]=O(n^{-1})$. It yields that
\begin{align}\label{result2}
\tilde{J}_i^F\left(p^{{\rm br}}, \hat{\tau}^{*,i}(p^{{\rm br}})\right)-J_i^F\left(p^{{\rm br}},\boldsymbol{\tau}^*(p^{{\rm br}})\right)\leq O\left(n^{-\frac{1}{2}}\right).
\end{align}
As a result, the desired result follows from \eqref{result1} and \eqref{result2}.
\end{proof}

\section{Optimal Control Problem for Leaders}\label{sec:4}

This section analyzes the optimal control problem for both the sub-leader (broker) and the leader (buyer) in the hierarchy.

\subsection{Optimal control problem for the sub-leader}	

Given $p^{{\rm bu}}\in\mathbb{U}^{\rm bu}$ and $\boldsymbol{\tau}^*=(\tau^{*,1}, \ldots, \tau^{*,n})\in(\mathbb{U}^{\rm ad})^n$ given by \eqref{nop}, we investigate the optimal control problem confronted by the broker, which is formulated by
\begin{align}\label{brp}
&\sup_{p^{{\rm br}}\in\mathbb{U}^{\rm br}}J^S\left(p^{{\rm bu}}, p^{{\rm br}}, \boldsymbol{\tau}^*\right)\nonumber\\
&\qquad=\sup_{p^{{\rm br}}\in\mathbb{U}^{\rm br}}\mathbb{E}\left[\int_0^T\left(p_t^{{\rm bu}}\frac{\nu}{n}\sum_{i=1}^nQ_t^i-\frac{1}{n}\sum_{i=1}^np_t^{{\rm br}}aQ_t^i-\frac{\kappa}{n}\sum_{i=1}^n( Q_t^i)^2-\frac{(p_t^{{\rm br}})^2}{2}\right)dt\right],
\end{align}
where $2\kappa\geq a^2$ which is used to guarantee the concavity of the objective function w.r.t. $p^{{\rm br}}$. Since we have adopted the approximate Nash equilibrium strategies $\boldsymbol{\tau}^*=(\tau^{*,1}, \ldots, \tau^{*,n})\in(\mathbb{U}^{\rm ad})^n$ implemented by sellers for optimal control problem for leaders, we shall use the simplified notation $Q^i=(Q_t^i)_{t\in[0,T]}$ to represent $Q^{*,i}$ under $\tau^{*,i}$ which satisfies \eqref{ndynamic} for notational convenience, i.e., for $i=1,\ldots,n$,
\begin{align}\label{fbsde4.1}
\begin{cases}
\displaystyle dQ_t^i=\left(\alpha(\overline{Q}_t^{(n)}-Q_t^i)+\frac{\beta^2}{2c}\hat{Y}_t^i\right)dt+\sigma dW_t^i+\sigma_0dW_t^0,~~Q_0^i=q_0,\\[0.6em]
\displaystyle d\hat{Y}_t^i=\left(b\hat{Q}_t^i+\alpha\hat{Y}_t^i- ap_t^{{\rm br}}\right)dt+Z_t^idW_t^i+Z_t^{0,i}dW_t^0,~~\hat{Y}_T^i=0,\\[0.6em]
\displaystyle d\hat{Q}_t^i=\left(\alpha\left(\overline{\hat{Q}_t^i}-\hat{Q}_t^i\right)+\frac{\beta^2}{2c}\hat{Y}_t^i\right)dt+\sigma dW_t^i+\sigma_0dW_t^0,~~\hat{Q}_0^i=q_0.
\end{cases}
\end{align}  

The primary difficulties in solving this problem stem from the curse of dimensionality and complex coupling structures. With $n$ followers, the system involves $3n$ state variables, making direct analytical or numerical methods infeasible for large $n$. Moreover, the problem exhibits multiple layers of coupling, includes mean-field coupling in the dynamics of $Q^i$, aggregate coupling in the objective, and centralize control coupling through the control variable $p^{{\rm br}}$ influence on all state dynamics. This complex structure leads to a high-dimensional, fully coupled FBSDE system, which is analytically intractable.  Instead of tackling this original problem \eqref{brp}-\eqref{fbsde4.1} directly, we introduce the following auxiliary problem with the corresponding objective functional given by
\begin{align}\label{aux1}
\sup_{p^{{\rm br}}\in\mathbb{U}^{\rm br}}\tilde{J}^S\left(p^{{\rm bu}}, p^{{\rm br}}\right):=\sup_{p^{{\rm br}}\in\mathbb{U}^{\rm br}}\mathbb{E}\left[\int_0^T\left(\nu p_t^{{\rm bu}} Q_t-ap_t^{{\rm br}}Q_t-\kappa Q_t^2-\frac{1}{2}(p_t^{{\rm br}})^2\right)dt\right],
\end{align}
subject to 
\begin{align}\label{eq:stateSta0}
\begin{cases}
    \displaystyle dQ_t=\left(\alpha\left(\overline{Q}_t-Q_t\right)+\frac{\beta^2}{2c}Y_t\right)dt+\sigma dW_t+\sigma_0dW_t^0,~~Q_0=q_0,\\[0.8em]
    \displaystyle dY_t=\left(bQ_t+\alpha Y_t- ap_t^{{\rm br}}\right)dt+Z_tdW_t+Z_t^0dW_t^0,~~Y_T=0.
\end{cases}
\end{align}
The following result provides the solution to the optimal control problem \eqref{aux1}-\eqref{eq:stateSta0}:
\begin{lemma}\label{pbr}
Given $p^{{\rm bu}}\in\mathbb{U}^{\rm bu}$, there exists an optimal pricing strategy $p^{{\rm br},*}\in\mathbb{U}^{\rm br}$ to problem \eqref{aux1}-\eqref{eq:stateSta0}, which is given by
\begin{align}\label{asymptotic1}
    p_t^{{\rm br},*}=-a\left(\overline{Q}_t^*+\overline{V}_t^*\right),~~ \forall t\in [0, T], 
\end{align}
where the process $(Q^*,Y^*,Z^*,Z^{*,0},U^*,\Phi^*,\Phi^{*,0},V^*)=(Q_t^*,Y_t^*,Z_t^*,Z_t^{*,0},U_t^*,\Phi_t^*,\Phi_t^{*,0},V_t^*)_{t\in[0,T]}$ satisfies the following FBSDE:
\begin{align}\label{fbsde4.2}
\begin{cases}
\displaystyle dQ_t^*=\left(\alpha\left(\overline{Q}_t^*-Q_t^*\right)+\frac{\beta^2}{2c}Y_t^*\right)dt+\sigma dW_t+\sigma_0dW_t^0,~~Q_0^*=q_0,\\[0.8em]
\displaystyle dY_t^*=\left(bQ_t^*+\alpha Y_t^*+a^2\left(\overline{Q}_t^*+\overline{V}_t^*\right)\right)dt+Z_t^*dW_t+Z_t^{*,0}dW_t^0,~~Y_T^*=0,\\[0.8em]
\displaystyle dU_t^*=\left(2\kappa Q_t^*-bV_t^*+\alpha U_t^*-\nu p_t^{{\rm bu}}-a^2\left(\overline{Q}_t^*+\overline{V}_t^*\right)-\alpha \overline{U}_t^*\right)dt+\Phi_t^* dW_t\\[0.8em]
\displaystyle\qquad\quad+\Phi_t^{*, 0}dW_t^0,~~U_T^*=0,\\[0.8em]
\displaystyle dV_t^*=\left(-\alpha V_t^*-\frac{\beta^2}{2c}U_t^*\right)dt,~~V_0^*=0.
\end{cases}
\end{align}
\end{lemma}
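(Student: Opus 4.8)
The plan is to apply the stochastic maximum principle to the auxiliary control problem \eqref{aux1}--\eqref{eq:stateSta0}, treating it as an optimal control problem over the linear FBSDE state system with control $p^{\rm br}\in\mathbb{U}^{\rm br}$. Because the forward state $Q_t$ is already coupled with the backward component $Y_t$ (which itself depends on the control through its drift), the system \eqref{eq:stateSta0} is a controlled FBSDE, so I would use the version of the SMP for FBSDE-constrained control problems: introduce adjoint processes for both equations — call them $U$ (adjoint to the forward equation for $Q$, a backward process) and $V$ (adjoint to the backward equation for $Y$, a forward process with $V_0=0$ since $Y_0$ is free) — write the Hamiltonian, and derive the stationarity condition in $p^{\rm br}$.

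Concretely, I would proceed as follows. \textbf{Step 1 (perturbation and variational system).} Fix the optimal $p^{{\rm br},*}$, perturb it to $p^{{\rm br},*}+\epsilon\, \pi$ for an admissible direction $\pi$, and linearize \eqref{eq:stateSta0} to obtain the variational FBSDE in $(\delta Q,\delta Y,\delta Z,\delta Z^0)$; note the conditional-mean term $\overline{Q}_t=\mathbb{E}[Q_t\mid\mathcal F_t^0]$ produces a conditional-mean term in the variational dynamics. \textbf{Step 2 (adjoint equations).} Introduce $(U,\Phi,\Phi^0)$ solving a BSDE driven by $\partial_Q$ of the running cost and drift coefficients, and $V$ solving a (degenerate, drift-only) forward SDE driven by $\partial_Y$ of the coefficients; because of the conditional-mean coupling, the adjoint equations pick up the $\alpha\overline U_t$ and $a^2(\overline Q_t^*+\overline V_t^*)$ terms exactly as displayed in \eqref{fbsde4.2} — this is the standard McKean--Vlasov/conditional mean-field adjoint structure where differentiating the $\overline{Q}$ term transfers a conditional expectation onto the adjoint. \textbf{Step 3 (duality identity).} Apply Itô's product rule to $d(\delta Q_t\, U_t^*)$ and $d(\delta Y_t\, V_t^*)$, integrate over $[0,T]$, take expectations, and use the terminal/initial conditions ($U_T^*=0$, $\delta Y_T=0$, $V_0^*=0$, $\delta Q_0=0$) to cancel the boundary terms; this collapses $\frac{d}{d\epsilon}\tilde J^S\big|_{\epsilon=0}$ to $\mathbb{E}\big[\int_0^T(\partial_{p^{\rm br}}\text{running cost} + aV_t^* - p_t^{{\rm br},*}\cdot\text{(coeff)})\,\delta p_t\,dt\big]$ after moving the $-ap^{\rm br}$ drift term in $Y$ through the duality. \textbf{Step 4 (stationarity and projection).} The Gateaux derivative must be nonpositive for all admissible perturbations; since the cost is strictly concave in $p^{\rm br}$ (coefficient $-\tfrac12$ on $(p^{\rm br})^2$), the unconstrained maximizer is $p_t^{{\rm br},*}=-a(\overline Q_t^*+\overline V_t^*)$, and I would check this candidate lies in $\mathbb{U}^{\rm br}$ (i.e. it is $\mathbb F^0$-progressively measurable — which holds because $\overline Q^*$ and $\overline V^*$ are conditional expectations given $\mathcal F_t^0$ — and satisfies the $\|\cdot\|_{2,T}\le C_T$ bound, which follows from a priori estimates on the FBSDE). \textbf{Step 5 (existence/well-posedness of \eqref{fbsde4.2}).} Argue that the coupled conditional mean-field FBSDE \eqref{fbsde4.2} is solvable — e.g. by taking conditional expectations to get a lower-dimensional FBSDE for $(\overline Q^*,\overline Y^*,\overline U^*,\overline V^*)$ that can be decoupled via Riccati-type ODEs (consistent with the strategy announced in the introduction), then recovering the full fluctuation processes — so that the candidate strategy is well-defined.

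The main obstacle I expect is \textbf{Step 3 combined with the conditional mean-field terms}: getting the duality bookkeeping exactly right so that the cross terms generated by $\overline Q_t$ in the state dynamics and by $\overline U_t$, $\overline V_t$ in the adjoint dynamics cancel cleanly — one must use the tower property $\mathbb{E}[\,\overline{X}_t\,\delta Y_t\,]=\mathbb{E}[\,\overline{X}_t\,\overline{\delta Y_t}\,]$ repeatedly and be careful that the $-\alpha\overline U_t$ and $-a^2(\overline Q_t^*+\overline V_t^*)$ terms in the $U^*$-equation are precisely what is needed to absorb the conditional-mean contributions from differentiating the $\alpha(\overline Q_t-Q_t)$ drift. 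A secondary technical point is verifying admissibility of the feedback $-a(\overline Q_t^*+\overline V_t^*)$, which is circular with the well-posedness in Step 5; I would resolve this by first establishing solvability of \eqref{fbsde4.2} (hence integrability of $\overline Q^*,\overline V^*$) and only then closing the loop by substituting the feedback back and confirming it reproduces the $Y^*$-drift $a^2(\overline Q_t^*+\overline V_t^*)$ shown in \eqref{fbsde4.2}.
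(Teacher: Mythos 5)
Your proposal follows essentially the same route as the paper's proof: a convex perturbation of the control, the adjoint pair $(U^*,V^*)$ with the conditional mean-field terms, duality via It\^o's product rule applied to $U_t^*(\delta Q)_t$ and $V_t^*(\delta Y)_t$, concavity under $2\kappa\ge a^2$ to handle the second-order term, and the first-order condition yielding $p_t^{{\rm br},*}=-a(\overline Q_t^*+\overline V_t^*)$, with well-posedness of \eqref{fbsde4.2} deferred to a Riccati-based decoupling of the conditioned system (which the paper likewise carries out in the following subsection rather than inside this lemma). No gaps; the tower-property bookkeeping you flag as the main obstacle is exactly how the paper collapses $X_1$ to an expression conditioned on $\mathcal F_t^0$.
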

    
\begin{proof}	
Let $p^{{\rm br},*}\in\mathbb{U}^{\rm br}$ be the optimal control of problem \eqref{aux1}-\eqref{eq:stateSta0} and $(Y^*, Z^*,Z^{*,0},Q^*)=(Y_t^*, Z_t^*,Z_t^{*,0}, Q_t^*)_{t\in [0, T]}$ be the corresponding optimal state satisfying the FBSDE \eqref{fbsde4.2}. For any $p^{{\rm br}}\in\mathbb{U}^{\rm br}$ and $\varepsilon>0$, denote by $p^{{\rm br}, \varepsilon}:=p^{{\rm br},*}+\varepsilon \delta p^{{\rm br}}$ with $\delta p^{{\rm br}}:=p^{{\rm br}}-p^{{\rm br},*}$. Let $( Y^{\varepsilon}, Z^{\varepsilon},Z^{\varepsilon,0},Q^{\varepsilon})$ be the state trajectories corresponding to the perturbed strategy $p^{{\rm br}, \varepsilon}\in\mathbb{U}^{\rm br}$. Set $\delta Y:=\frac{1}{\varepsilon}(Y^{\varepsilon}-Y^*)$, $\delta Z:=\frac{1}{\varepsilon}(Z^{\varepsilon}-Z^*)$, $\delta Z^0:=\frac{1}{\varepsilon}(Z^{\varepsilon,0}-Z^{*,0})$ and $\delta Q:=\frac{1}{\varepsilon}(Q^{\varepsilon}-Q^*)$. Then, we have 
\begin{align*}
\begin{cases}
\displaystyle d(\delta Q)_t=\left(\alpha \overline{(\delta Q)}_t-\alpha(\delta Q)_t+\frac{\beta^2}{2c}(\delta Y)_t\right)dt,~~(\delta Q)_0=0,\\[0.8em]
\displaystyle d(\delta Y)_t=\left(b(\delta Q)_t+\alpha (\delta Y)_t-a(\delta p^{{\rm br}})\right)dt+(\delta Z)_tdW_t+(\delta Z^0)_tdW_t^0,~~(\delta Y)_T=0.\\
\end{cases}
\end{align*}
By applying It\^o's formula to $U_t^*(\delta Q)_t$ from $t=0$ to $t=T$ to have that
\begin{align}\label{ito1}
0&=\mathbb{E}[U_T^*(\delta Q)_T-U_0^*(\delta Q)_0]=\mathbb{E}\Bigg[\int_0^T\bigg(U_t^*\left(\alpha \overline{(\delta Q)}_t-\alpha (\delta Q)_t+\frac{\beta^2}{2c}(\delta Y)_t\right)+(\delta Q)_t\big(-\nu p_t^{{\rm bu}}+\alpha U_t^*\nonumber\\
&\qquad\quad-bV_t^*+2\kappa Q_t^*-a^2(\overline{Q}_t^*+\overline{V}_t^*)-\alpha \overline{U}_t^*\big)\bigg)dt\Bigg].
\end{align}
Similarly, we obtain
\begin{align}\label{ito2}
0%&=\mathbb{E}[V_T^*(\delta Y)_T-V_0^*(\delta Y)_0]\nonumber\\
&=\mathbb{E}\left[\int_0^T\left(V_t^*\left(b(\delta Q)_t+\alpha (\delta Y)_t-a\delta p_t^{{\rm br}}\right)+(\delta Y)_t\left(-\alpha V_t^*-\frac{\beta^2}{2c} U_t^*\right)\right)dt\right].
\end{align}
As a consequence, it holds that 
{\small\begin{align*}
&\tilde{J}^S\left(p^{{\rm bu}}, p^{{\rm br}, \varepsilon}\right)-\tilde{J}^S\left(p^{{\rm bu}}, p^{{\rm br},*}\right)\nonumber\\
&=\mathbb{E}\left[\int_0^T\bigg(\nu p_t^{{\rm bu}}(Q_t^*+\varepsilon (\delta Q)_t)-a(p_t^{{\rm br},*}+\varepsilon \delta p_t^{{\rm br}})(Q_t^*+\varepsilon (\delta Q)_t)-\kappa(Q_t^*+\varepsilon (\delta Q)_t)^2-\frac{1}{2}(p_t^{{\rm br},*}+\varepsilon \delta p_t^{{\rm br}})^2\bigg)dt\right]\nonumber\\
&\quad-\mathbb{E}\left[\int_0^T\bigg(\nu p_t^{{\rm bu}}Q_t^*-ap_t^{{\rm br},*}Q_t^*-\kappa(Q_t^*)^2-\frac{1}{2}(p_t^{{\rm br},*})^2\bigg)dt\right]=:-\varepsilon X_1-\varepsilon^2 X_2,
\end{align*}}where, the expectations $X_1$ and $X_2$ are defined by
\begin{align}\label{x1-2}
X_1&:=\mathbb{E}\left[\int_0^T\left(-\nu p_t^{{\rm bu}}(\delta Q)_t+a(\delta p_t^{{\rm br}}Q_t^*+p_t^{{\rm br},*}(\delta Q)_t)+2\kappa (\delta Q)_t Q_t^*+p^{{\rm br},*}\delta p_t^{{\rm br}}\right)dt\right],\nonumber\\
X_2&:=\mathbb{E}\left[\int_0^T\left( \kappa\left((\delta Q)_t\right)^2+\frac{1}{2}(\delta p_t^{{\rm br}})^2+a\delta p_t^{{\rm br}}(\delta Q)_t\right)dt\right].
\end{align}
By recalling \eqref{aux1}, we have that $\tilde{J}^S(p^{{\rm bu}}, p^{{\rm br}})$ is concave in $p^{{\rm br}}$ under the condition $ 2\kappa\geq a^2$. This yields that $X_2\geq 0$. In fact, for any nonnegative $\lambda_1,\lambda_2$ satisfying $\lambda_1+\lambda_2=1$, it follows that %
\begin{align*}
0&\geq \lambda_1\tilde{J}^S\left(p^{{\rm bu}}, p^{{\rm br}, \varepsilon}\right)+\lambda_2\tilde{J}^S\left(p^{{\rm bu}}, p^{{\rm br},*}\right)-\tilde{J}^S\left(p^{{\rm bu}}, \lambda_1p^{br, \varepsilon}+\lambda_2p^{{\rm br},*}, \tau\right)\\
&=-\lambda_1\lambda_2\mathbb{E}\left[\int_0^T\left(\kappa((\delta Q)_t)^2+\frac{1}{2}(\delta p^{{\rm br}})^2+\delta p_t^{{\rm br}}a(\delta Q)_t\right)dt\right]=-\lambda_1\lambda_2X_2.
\end{align*}
Due to the optimality of $p^{{\rm br},*}$, we have $\tilde{J}^S(p^{{\rm bu}}, p^{{\rm br}, \varepsilon})-\tilde{J}^S(p^{{\rm bu}}, p^{{\rm br},*})\leq 0$. Thus, it follows from \eqref{ito1}, \eqref{ito2} and \eqref{x1-2} that
\begin{align*}
X_1&=\mathbb{E}\left[\int_0^T\left(a(\delta p_t^{{\rm br}}+(\delta Q)_t)(p^{{\rm br},*}+a\left( \overline{Q}_t^*+ \overline{V}_t^*)\right)+\alpha \left((\delta Q)_t\overline{U}_t^*-\overline{(\delta Q)}_tU_t^*\right)\right)dt\right]\\
&=\int_0^T\mathbb{E}\left[\mathbb{E}\left[\left(a(\delta p_t^{{\rm br}}+(\delta Q)_t)(p^{{\rm br},*}+a( \overline{Q}_t^*+ \overline{V}_t^*))+\alpha ((\delta Q)_t\overline{U}_t^*- \overline{(\delta Q)}_tU_t^*)\right)\bigg|\mathcal{F}_t^0\right]\right]dt\\
&=\int_0^T\mathbb{E}\left[\left(a(\delta p_t^{{\rm br}}+\overline{(\delta Q)}_t)(p^{{\rm br},*}+a( \overline{Q}_t^*+ \overline{V}_t^*))\right)\right]dt.
\end{align*}
By the arbitrariness of $\varepsilon>0$, we have $X_1=0$. Thus, we deduce that \eqref{asymptotic1} is the optimal solution due to the arbitrariness of $p^{{\rm br}}\in\mathbb{U}^{\rm br}$.
\end{proof}

\subsection{The solvability of FBSDE \eqref{fbsde4.2}}

We now apply the decoupling technique based on the four-step scheme to study the solvability of FBSDE \eqref{fbsde4.2}. To this purpose, taking conditional expectations on both sides of \eqref{fbsde4.2}, we have
    \begin{align}\label{fbsde4.2conditional}
    	\begin{cases}
    		\displaystyle d\overline{Q}_t^*=\frac{\beta^2}{2c}\overline{Y}_t^*dt+\sigma_0dW_t^0, ~~ \overline{Q}_0^*=q_0,\\[0.8em]
    		\displaystyle d\overline{Y}_t^*=\left((b+a^2)\overline{Q}_t^*+a^2\overline{V}_t^*+\alpha \overline{Y}_t^*\right)dt+Z_t^{*,0}dW_t^0,~~ \overline{Y}_T^*=0,\\[0.8em]
    		\displaystyle d\overline{U}_t^*=\left((2\kappa-a^2) \overline{Q}_t^*-b\overline{V}_t^*-\nu p_t^{{\rm bu}}-a^2\overline{V}_t^*\right)dt+\Phi_t^{*, 0}dW_t^0,~~ \overline{U}_T^*=0,\\[0.8em]
    		\displaystyle d\overline{V}_t^*=\left(-\alpha \overline{V}_t^*-\frac{\beta^2}{2c}\overline{U}_t^*\right)dt,~~ \overline{V}_0^*=0.
    	\end{cases}
    \end{align}
Denote by $\boldsymbol{X}:=(\overline{Q}^*, \overline{V}^*)$ and $\boldsymbol{Y}:=(\overline{Y}^*, \overline{U}^*)$. Then, Eq.~\eqref{fbsde4.2conditional} can be read as: 
    \begin{align}\label{overlinex}
    	\begin{cases}
    		\displaystyle d\boldsymbol{X}_t=(A_1\boldsymbol{X}_t+B_1 \boldsymbol{Y}_t)dt+DdW_t^0,~~ \boldsymbol{X}_0=(q_0,0)^{\top},\\[0.6em]
    		\displaystyle d\boldsymbol{Y}_t=(A_2 \boldsymbol{X}_t+B_2\boldsymbol{Y}_t+{\rm C}_t)dt+{\rm D}_tdW_t^0,~~\boldsymbol{Y}_T=(0,0)^{\top},
    	\end{cases}	
    \end{align}
    where the matrices of coefficients are given by
    \begin{align}\label{eq:matrxi0000}
    	A_1&=\begin{pmatrix}
    	0& 0\\
    	0&-\alpha
    \end{pmatrix},~~ B_1=\begin{pmatrix}
    	\frac{\beta^2}{2c}&0\\
    	0&-\frac{\beta^2}{2c}
    \end{pmatrix},~~ A_2=\begin{pmatrix}
    b+a^2&a^2\\
    2\kappa-a^2&-b-a^2
\end{pmatrix},\nonumber\\ 
B_2&=\begin{pmatrix}
\alpha &0\\
0&0
\end{pmatrix},~~ 
{\rm C}_t=\begin{pmatrix}
0\\-\nu p_t^{{\rm bu}}
\end{pmatrix},~~ D=\begin{pmatrix}
    	\sigma_0\\
    	0
    \end{pmatrix},~~ {\rm D}_t=\begin{pmatrix}
    	Z_t^{*,0}\\
    	\Phi_t^{*,0}
    \end{pmatrix}.
\end{align}
Note that both ${\rm C}_t$ and ${\rm D}_t$ for $t\in[0,T]$ are random. Let us consider the following relation between $\boldsymbol{X}$ and $\boldsymbol{Y}$ as $\boldsymbol{Y}_t=-f_t\boldsymbol{X}_t+\ell_t,\quad\forall t\in[0,T], $
where $t\mapsto f_t$ is a deterministic matrix-valued function on $[0,T]$ and $(\ell, \sigma^{\ell})=(\ell_t, \sigma^{\ell})_{t\in [0,T]}$ are $\mathbb{F}^0$-adapted processes satisfying $d\ell_t=\mu_t^{\ell}dt+\sigma_t^{\ell}dW_t^0$ with $\ell_T=0$.  By applying It\^o's rule to $\boldsymbol{Y}_t$ and then comparing coefficients, we have  ${\rm D}_t=f_tD+\sigma_t^{\ell}$ and
\begin{align}\label{ft}
\frac{df_t}{dt}=-f_tA_1+f_tB_1f_t+B_2f_t-A_2,\quad f_T=\boldsymbol{0}_2.
\end{align}
    If the matrix Riccati differential equation (RDE) \eqref{ft} admits a unique solution,  then the following decoupled FBSDE \eqref{ellt} is solvable.  
    \begin{align}\label{ellt}
    \begin{cases}
        \displaystyle d\boldsymbol{X}_t=\left((A_1-B_1f_t)\boldsymbol{X}_t+B_1\ell_t\right)dt+DdW_t^0,~~\boldsymbol{X}_0=(q_0,0)^{\top},\\[0.4em]
    	\displaystyle d\ell_t=\left((f_tB_1+B_2)\ell_t+{\rm C}_t\right)dt+\sigma_t^{\ell}dW_t^0,~~ \ell_T=(0,0)^{\top}.
    \end{cases}
    \end{align}
After obtaining $\overline{Q}^*, \overline{Y}^*, \overline{U}^*$ and $\overline{V}^*$, plugging them into FBSDE \eqref{fbsde4.2}, it becomes a standard linear FBSDE with random coefficients. Observed that the first two equations form a coupled FBSDE with random coefficients: 
\begin{align}\label{eq:BSDE_QY}
\begin{cases}
\displaystyle dQ_t^*=\left(-\alpha Q_t^*+\frac{\beta^2}{2c}Y_t^*\right)dt+\alpha\overline{Q}_t^*dt+\sigma dW_t+\sigma_0dW_t^0,~~Q_0^*=q_0,\\[0.6em]
\displaystyle dY_t^*=\left(bQ_t^*+\alpha Y_t^*\right)dt+a^2\left(\overline{Q}_t^*+\overline{V}_t^*\right)dt+Z_t^*dW_t+Z_t^{*,0}dW_t^0, ~~ Y_T^*=0.
\end{cases}
\end{align}
The FBSDE \eqref{eq:BSDE_QY} is solvable in the sense that there exists a pair of $\mathbb{F}^0$-adapted processes $(\gamma, \sigma^{\gamma})=(\gamma_t, \sigma_t^{\gamma})_{t\in[0,T]}$ satisfying 
\begin{align}\label{gamma}
d\gamma_t=\left(-\frac{\beta^2}{2c}\xi_t+\alpha\right)\gamma_tdt+\left(a^2(\overline{Q}_t^*+\overline{V}_t^*)-\alpha \xi_t\overline{Q}_t^*\right)dt+\sigma_t^{\gamma}dW_t^0, \quad \gamma_T=0,
\end{align}
such that $Y_t^*=\xi_tQ_t^*+\gamma_t$ for $t\in[0,T]$ with $\xi$ given by \eqref{xi}.  Then, substitute $Q^*$ into the 3rd equation in \eqref{fbsde4.2} to have that
    \begin{align}
    	\begin{cases}
    		\displaystyle dU_t^*=\left(-bV_t^*+\alpha U_t^*\big)dt+\big(2\kappa Q_t^*-\nu p_t^{{\rm bu}}-a^2\left(\overline{Q}_t^*+\overline{V}_t^*\right)-\alpha \overline{U}_t^*\right)dt\\[0.6em]
            \displaystyle\qquad\qquad+\Phi_t^* dW_t+\Phi_t^{*, 0}dW_t^0,~~U_T^*=0,\\[0.6em]
    		\displaystyle dV_t^*=\left(-\alpha V_t^*-\frac{\beta^2}{2c}U_t^*\right)dt,~~V_0^*=0.
    	\end{cases}
    \end{align}
    There exists a deterministic function $t\to\phi_t$ and a pair of $\mathbb{F}^0$-adapted processes $(\chi,\sigma^{\chi})=(\chi_t, \sigma_t^{\chi})_{t\in [0,T]}$ such that $U_t^*=\phi_tV_t^*+\chi_t$ for $t\in [0, T]$ satisfying
    \begin{align}
    	d\phi_t&=\left(\frac{\beta^2}{2c} \phi_t^2+2\alpha\phi_t-b\right)dt,~~\phi_T=0.\label{phi}\\ d\chi_t&=\left(\alpha+\frac{\beta^2}{2c}\phi_t\right)\chi_tdt+\left(2\kappa Q_t^*-\nu p_t^{{\rm bu}}-a^2\left(\overline{Q}_t^*+\overline{V}_t^*\right)-\alpha \overline{U}_t^*\right)dt+\sigma_t^{\chi}dW_t^0, ~~ \chi_T=0.\label{phi2}
    \end{align}
Let $C>0$ be a genic constant depending on $T>0$ only, which may be different from line to line.   
By virtue of \eqref{ellt} together with the standard estimates of BSDE and Gronwall's lemma, we have $\sup_{t\in [0,T]}\mathbb{E}[|\ell_t|^2]\leq Ce^{CT}$. Recall $\boldsymbol{X}=(\boldsymbol{X}_t)_{t\in[0,T]}$ given in the 1st equation in \eqref{ellt}. Then, using the Burkholder-Davis-Gundy (BDG) inequality, one has
\begin{align*}
\mathbb{E}\left[\sup_{t\in [0 ,T]}\left|\boldsymbol{X}_t\right|^2\right]\leq C\left(1+\int_0^T\mathbb{E}\left[\sup_{s\in [0, t]}|\boldsymbol{X}_s|^2\right]dt\right).
\end{align*}
It follows from the Gronwall's lemma again that $\mathbb{E}[\sup_{t\in [0,T]}|\boldsymbol{X}_t|^2]\leq Ce^{CT}$. Note that, it can be rewritten as $p_t^{{\rm br},*}=-a\boldsymbol{1}_2^{\top}\boldsymbol{X}_t$ for $t\in[0,T]$. Consequently, $\Vert p^{{\rm br},*} \Vert_{2,T}^2\leq C$, and hence $p^{{\rm br},*}\in \mathbb{U}^{\rm br}$. 
Thus, the existence and uniqueness of solutions to FBSDE \eqref{fbsde4.2} follows from the existence of a unique solution of RDE \eqref{ft}. In other words, once $f$ is known, the existence of a solution to $\ell=(\ell_t)_{t\in[0,T]}$ will immediately follow.
However, since the corresponding RDE \eqref{ft} is not symmetric, it might not admit a unique solution. For asymmetric RDEs, certain sufficient conditions can guarantee the existence of solutions. Based on these considerations, we propose the following lemma, whose proof can be derived from Theorem 4.3 in \cite{MY99} and Lemma 3 in \cite{MB18}.
\begin{lemma}\label{condition1}
Let $\boldsymbol{0}_2$ and $\mathbf{I}_2$ be $2 \times 2$ zero matrix and identity matrix respectively, and recall $A_i,B_i$ for $i=1,2$ given in \eqref{eq:matrxi0000}. If it holds that $\det \left\{ \begin{pmatrix} \mathbf{I}_2 &  \boldsymbol{0}_2\end{pmatrix} e^{M(t-T)} \begin{pmatrix}   \mathbf{I}_2\\ \boldsymbol{0}_2 \end{pmatrix} \right\} > 0$ for $t\in[0,T]$, where $M=	\begin{pmatrix}
A_1 & -B_1\\
-A_2 & B_2 
\end{pmatrix}$, then the RDE \eqref{ft} has a unique solution given by
\begin{align}\label{eq:solutionf}
 f_t = \left[ \begin{pmatrix} \boldsymbol{0}_2 & \mathbf{I}_2 \end{pmatrix} e^{M(t-T)} \begin{pmatrix}\mathbf{I}_2\\ \boldsymbol{0}_2  \end{pmatrix} \right] \left[ \begin{pmatrix}   \mathbf{I}_2&\boldsymbol{0}_2 \end{pmatrix} e^{M(t-T)} \begin{pmatrix} \mathbf{I}_2 \\ \boldsymbol{0}_2 \end{pmatrix} \right]^{-1},~~\forall t\in[0,T].   
\end{align}
%satisfing $f_T=\boldsymbol{0}$.
\end{lemma}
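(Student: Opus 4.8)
The plan is to reduce the asymmetric matrix Riccati differential equation \eqref{ft} to a \emph{linear} matrix ODE via the classical factorization underlying the four-step scheme, and then to read off the explicit formula. First I would seek a solution of \eqref{ft} in the factored form $f_t = \Lambda_t P_t^{-1}$, where $t\mapsto P_t$ and $t\mapsto\Lambda_t$ are $2\times 2$ matrix-valued functions on $[0,T]$. Differentiating and substituting into \eqref{ft}, the quadratic term $f_tB_1f_t$ is absorbed, and matching coefficients shows it suffices to impose the linear system $\dot P_t = A_1 P_t - B_1\Lambda_t$, $\dot\Lambda_t = -A_2 P_t + B_2\Lambda_t$, that is, $\frac{d}{dt}\begin{pmatrix}P_t\\\Lambda_t\end{pmatrix}=M\begin{pmatrix}P_t\\\Lambda_t\end{pmatrix}$ with $M$ exactly the matrix in the statement; the terminal condition $f_T=\boldsymbol{0}_2$ is met by the choice $P_T=\mathbf{I}_2$, $\Lambda_T=\boldsymbol{0}_2$. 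Integrating backward from $T$ yields $\begin{pmatrix}P_t\\\Lambda_t\end{pmatrix}=e^{M(t-T)}\begin{pmatrix}\mathbf{I}_2\\\boldsymbol{0}_2\end{pmatrix}$, whence $P_t=\begin{pmatrix}\mathbf{I}_2 & \boldsymbol{0}_2\end{pmatrix}e^{M(t-T)}\begin{pmatrix}\mathbf{I}_2\\\boldsymbol{0}_2\end{pmatrix}$ and $\Lambda_t=\begin{pmatrix}\boldsymbol{0}_2 & \mathbf{I}_2\end{pmatrix}e^{M(t-T)}\begin{pmatrix}\mathbf{I}_2\\\boldsymbol{0}_2\end{pmatrix}$, and \eqref{eq:solutionf} for $f_t=\Lambda_t P_t^{-1}$ follows immediately.

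The second step is to check that this candidate is genuinely defined on the whole interval. The only possible obstruction is the invertibility of $P_t$; but $P_t=\begin{pmatrix}\mathbf{I}_2 & \boldsymbol{0}_2\end{pmatrix}e^{M(t-T)}\begin{pmatrix}\mathbf{I}_2\\\boldsymbol{0}_2\end{pmatrix}$ is precisely the matrix appearing in the hypothesis $\det\{\cdot\}>0$, so that assumption is exactly what guarantees $P_t^{-1}$ exists for every $t\in[0,T]$ (note $\det P_T=\det\mathbf{I}_2=1$, consistent with the sign of the determinant near $T$). Granting this, a direct differentiation confirms that $f_t=\Lambda_t P_t^{-1}$ is $C^1$ on $[0,T]$ and solves \eqref{ft}. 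This is the step where I would invoke Theorem 4.3 of \cite{MY99}, which packages exactly this ``linearize, then impose a nondegeneracy condition on a fundamental solution'' criterion for Riccati equations coming from (possibly non-symmetric) FBSDE decoupling, together with Lemma 3 of \cite{MB18}, which records the corresponding determinant condition in a mean-field LQ setting parallel to ours.

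For uniqueness I would use the standard continuation argument: the right-hand side of \eqref{ft} is polynomial, hence locally Lipschitz, in $f$, so there is a unique maximal solution running backward from $t=T$; the explicit $f_t=\Lambda_t P_t^{-1}$ is one such solution, defined on all of $[0,T]$, and any other solution agrees with it on a neighbourhood of $T$ and therefore, by connectedness of $[0,T]$ and local uniqueness, on the entire interval. Combining the two steps gives that \eqref{ft} has the unique solution \eqref{eq:solutionf}; feeding it back into \eqref{ellt} turns that system into a genuinely decoupled linear FBSDE with bounded coefficients, which completes the solvability argument for \eqref{fbsde4.2} outlined before the lemma.

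The main obstacle --- indeed the only substantive point --- is upgrading ``local solution of the RDE'' to ``solution on all of $[0,T]$'': asymmetric Riccati equations may blow up in finite time, and the determinant hypothesis is precisely the clean criterion ruling this out. Everything else is bookkeeping: the coefficient matching in the factorization, reading off the block structure of $e^{M(t-T)}$, and the routine Lipschitz uniqueness. Accordingly I would keep the write-up brief, citing \cite{MY99} and \cite{MB18} for the general nondegeneracy theorem and only spelling out how $M$, the terminal data $(P_T,\Lambda_T)=(\mathbf{I}_2,\boldsymbol{0}_2)$, and the block projections specialize to the coefficients in \eqref{eq:matrxi0000} of our problem.
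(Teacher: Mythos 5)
Your proposal is correct and is exactly the standard linearization (Radon's lemma) argument that the paper delegates to Theorem 4.3 of \cite{MY99} and Lemma 3 of \cite{MB18}: factor $f_t=\Lambda_tP_t^{-1}$, solve the associated linear Hamiltonian system $\frac{d}{dt}(P_t^{\top},\Lambda_t^{\top})^{\top}=M(P_t^{\top},\Lambda_t^{\top})^{\top}$ backward from $(P_T,\Lambda_T)=(\mathbf{I}_2,\boldsymbol{0}_2)$, and use the determinant hypothesis to keep $P_t$ invertible on all of $[0,T]$, with uniqueness from local Lipschitz continuity of the quadratic right-hand side. Since the paper states the lemma without writing out a proof, your write-up simply supplies the details it leaves to the cited references; no gap.
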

	
The following theorem establishes the asymptotic optimality of the proposed strategy \eqref{asymptotic1}.
\begin{theorem}[Asymptotically optimal strategy for broker]
Given $p^{{\rm bu}}\in\mathbb{U}^{\rm bu}$, recall $p^{{\rm br},*}$ given by \eqref{asymptotic1} in Lemma~\ref{pbr}. Then, $p^{{\rm br},*}\in\mathbb{U}^{\rm br}$ is asymptotically optimal to problem \eqref{brp}-\eqref{fbsde4.1} in the sense that
there exists $\epsilon_2=O(n^{-\frac{1}{2}})>0$ such that 
\begin{align}\label{main1}
J^S\left(p^{{\rm bu}},p^{{\rm br},*}, \boldsymbol{\tau}^*(p^{{\rm br},*})\right)+\epsilon_2\geq \sup_{p^{{\rm br}}\in\mathbb{U}^{\rm br}}J^S\left(p^{{\rm bu}},p^{{\rm br}}, \boldsymbol{\tau}^*(p^{{\rm br}})\right),
\end{align}
where $\boldsymbol{\tau}^*(p^{{\rm br}})$ for $p^{{\rm br}}\in\mathbb{U}^{\rm br}$ is given by \eqref{nop}.
\end{theorem}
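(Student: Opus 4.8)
The overall strategy is a sandwich argument at the level of value functions. I will show that the $n$-player payoff $J^{S}(p^{{\rm bu}},p^{{\rm br}},\boldsymbol{\tau}^*(p^{{\rm br}}))$ is within $O(n^{-1/2})$ of the decoupled payoff $\tilde{J}^{S}(p^{{\rm bu}},p^{{\rm br}})$ of \eqref{aux1}, \emph{uniformly} over $p^{{\rm br}}\in\mathbb{U}^{{\rm br}}$. Granting this uniform comparison and using that $p^{{\rm br},*}$ maximizes $\tilde{J}^{S}(p^{{\rm bu}},\cdot)$ by Lemma~\ref{pbr},
\[
\sup_{p^{{\rm br}}\in\mathbb{U}^{{\rm br}}}J^{S}\big(p^{{\rm bu}},p^{{\rm br}},\boldsymbol{\tau}^*(p^{{\rm br}})\big)\le\tilde{J}^{S}(p^{{\rm bu}},p^{{\rm br},*})+O(n^{-1/2})\le J^{S}\big(p^{{\rm bu}},p^{{\rm br},*},\boldsymbol{\tau}^*(p^{{\rm br},*})\big)+O(n^{-1/2}),
\]
where the first inequality applies the comparison at the maximizer and the second applies it once more at $p^{{\rm br},*}$. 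This is exactly \eqref{main1} with $\epsilon_2=O(n^{-1/2})$.

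\textbf{Step 1: well-posedness and a law identification.} Under the determinant condition of Lemma~\ref{condition1}, the RDE \eqref{ft} is uniquely solvable, hence so are \eqref{ellt} and the FBSDE \eqref{fbsde4.2}; together with the estimates following \eqref{ellt} this makes $p^{{\rm br},*}$ well defined with $p^{{\rm br},*}\in\mathbb{U}^{{\rm br}}$. Next, for a fixed $p^{{\rm br}}\in\mathbb{U}^{{\rm br}}$, taking $\mathbb{E}[\,\cdot\mid\mathcal{F}_t^0]$ in the state equation \eqref{eq:stateSta0} produces exactly the linear conditional FBSDE that characterizes the follower fixed point $(m^*,\overline{Y}^{m^*,*})$ in Lemma~\ref{mfe}; by its uniqueness $\mathbb{E}[Q_t\mid\mathcal{F}_t^0]=m_t^*$, so \eqref{eq:stateSta0} reduces to \eqref{fbsde3.1} with $m=m^*$, i.e.\ precisely to the equation \eqref{audynamic} governing each pair $(\hat{Q}^i,\hat{Y}^i)$. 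Hence $\hat{Q}^1,\dots,\hat{Q}^n$ are exchangeable, each with $\mathbb{E}[\hat{Q}_t^i\mid\mathcal{F}_t^0]=m_t^*$ and with the law of the representative state $Q$. Since $p^{{\rm bu}},p^{{\rm br}}$ are $\mathbb{F}^0$-adapted, these facts turn $\tilde{J}^{S}(p^{{\rm bu}},p^{{\rm br}})$ into the exact $n$-sample functional obtained from \eqref{brp} by replacing every $Q_t^i$ with $\hat{Q}_t^i$. The whole comparison is thereby reduced to estimating $Q^i-\hat{Q}^i$.

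\textbf{Step 2: the mean-field estimate and cost comparison.} Subtracting the $\hat{Q}^i$-equation from the $Q^i$-equation in \eqref{fbsde4.1}, the diffusion terms $\sigma\,dW_t^i+\sigma_0\,dW_t^0$ cancel, so $Q^i-\hat{Q}^i$ is absolutely continuous with $|Q_t^i-\hat{Q}_t^i|^2\le C\int_0^t(|\overline{Q}_s^{(n)}-m_s^*|^2+|Q_s^i-\hat{Q}_s^i|^2)\,ds$. Averaging over $i$, inserting the bound $\mathbb{E}[\sup_t|\overline{Q}_t^{(n)}-m_t^*|^2]=O(n^{-1})$ from Lemma~\ref{lemma3.1}, and invoking Gronwall's lemma yields $\frac1n\sum_i\mathbb{E}[\sup_t|Q_t^i-\hat{Q}_t^i|^2]=O(n^{-1})$; crucially the constants depend on $p^{{\rm br}}$ only through $\|p^{{\rm br}}\|_{2,T}\le C_T$ and are therefore uniform over $\mathbb{U}^{{\rm br}}$, and Lemma~\ref{lemma3.1} also supplies $\frac1n\sum_i\mathbb{E}[\sup_t|Q_t^i|^2]\le C$ and $\frac1n\sum_i\mathbb{E}[\sup_t|\hat{Q}_t^i|^2]\le C$. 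By Step~1, $J^{S}(p^{{\rm bu}},p^{{\rm br}},\boldsymbol{\tau}^*(p^{{\rm br}}))-\tilde{J}^{S}(p^{{\rm bu}},p^{{\rm br}})$ is the time integral of the three terms $\nu p_t^{{\rm bu}}\frac1n\sum_i(Q_t^i-\hat{Q}_t^i)$, $-a p_t^{{\rm br}}\frac1n\sum_i(Q_t^i-\hat{Q}_t^i)$ and $-\kappa\frac1n\sum_i(Q_t^i-\hat{Q}_t^i)(Q_t^i+\hat{Q}_t^i)$; applying the Cauchy--Schwarz inequality together with $\|p^{{\rm bu}}\|_{2,T},\|p^{{\rm br}}\|_{2,T}\le C_T$ and the above moment bounds, each term is $O(n^{-1/2})$ with a constant depending only on $T$ and the model parameters. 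This is precisely the uniform estimate used in the opening display.

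\textbf{Main obstacle.} The delicate point is the \emph{uniformity in $p^{{\rm br}}$} of the $O(n^{-1/2})$ bound: because $\sup_{p^{{\rm br}}}$ appears on both sides of \eqref{main1}, one must verify that every constant entering Lemma~\ref{lemma3.1} and the propagation-of-chaos step depends on the broker's price only via $\|p^{{\rm br}}\|_{2,T}$, which is controlled by $C_T$ on $\mathbb{U}^{{\rm br}}$. A closely related subtlety is the law identification $\mathbb{E}[Q_t\mid\mathcal{F}_t^0]=m_t^*$ for the auxiliary state: this is what collapses the decoupled functional \eqref{aux1} into an \emph{exact} $n$-sample average, so that only the single error $Q^i-\hat{Q}^i$ has to be estimated, rather than a chain of nested mean-field and control approximations.
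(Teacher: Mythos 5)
Your proposal is correct and follows essentially the same route as the paper: compare $J^{S}(p^{\rm bu},p^{\rm br},\boldsymbol{\tau}^*(p^{\rm br}))$ uniformly in $p^{\rm br}$ with the auxiliary mean-field functional whose maximizer is $p^{{\rm br},*}$ (Lemma~\ref{pbr}), then control the gap via the estimate $\mathbb{E}[\int_0^T|Q_t^i-\hat Q_t^i|^2dt]=O(n^{-1})$ from Lemma~\ref{lemma3.1}, Gronwall, and Cauchy--Schwarz. The only cosmetic difference is that you reduce to the single representative problem \eqref{aux1} by an equality-in-law argument, whereas the paper introduces $n$ per-agent auxiliary problems $\tilde J_i^S$ and identifies their common optimizer with $p^{{\rm br},*}$ through uniqueness of the conditional FBSDE systems \eqref{fbsde4.2conditional} and \eqref{QIconditional} --- the same mathematical content.
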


\begin{proof} 
To prove the theorem, let us introduce another auxiliary stochastic control problem as:
\begin{align}\label{eq:auxiliaryproblemthm0}
\sup_{p^{{\rm br}}\in\mathbb{U}^{\rm br}}\tilde{J}_i^S\left(p^{{\rm bu}}, p^{{\rm br}}\right)=\sup_{p^{{\rm br}}\in\mathbb{U}^{\rm br}}\mathbb{E}\left[\int_0^T\left(\nu 
 p_t^{{\rm bu}}\hat{Q}_t^i-p_t^{{\rm br}}a\hat{Q}_t^i-\kappa (\hat{Q}_t^i)^2-\frac{1}{2}(p_t^{{\rm br}})^2\right)dt\right],
\end{align}
subject to 
\begin{align*}
\begin{cases}
\displaystyle d\hat{Q}_t^i=\left(\alpha\left(\overline{\hat{Q}_t^i}-\hat{Q}_t^i\right)+\frac{\beta^2}{2c}\hat{Y}_t^i\right)dt+\sigma dW_t^i+\sigma_0dW_t^0, ~~\hat{Q}_0^i=q_0,\\[0.8em]
 \displaystyle d\hat{Y}_t^i=\left(b\hat{Q}_t^i+\alpha \hat{Y}_t^i- ap_t^{{\rm br}}\right)dt+Z_t^idW_t^i+Z_t^{0,i}dW_t^0,~~\hat{Y}_T^i=0.
\end{cases}
\end{align*}
Using the similar argument in the proof of Lemma~~\ref{pbr}, we can get that the optimal solution of problem \eqref{eq:auxiliaryproblemthm0} is given by $-a(\overline{\hat{Q}}_t^{*,i}+\overline{V}_t^{*,i})$ for $t\in[0,T]$, where the processes 
     \begin{align}\label{QIconditional}
    	\begin{cases}
    		\displaystyle d\overline{\hat{Q}_t^{*,i}}=\frac{\beta^2}{2c}\overline{\hat{Y}_t^{*,i}}dt+\sigma_0dW_t^0, ~~ \overline{\hat{Q}_0^{*,i}}=q_0,\\[0.8em]
    		\displaystyle d\overline{\hat{Y}_t^{*,i}}=\left((b+a^2)\overline{\hat{Q}_t^{*,i}}+a^2\overline{V}_t^{*,i}+\alpha \overline{\hat{Y}_t^{*,i}}\right)dt+Z_t^{*,0,i}dW_t^0,~~ \overline{\hat{Y}_T^{*,i}}=0,\\[0.8em]
    		\displaystyle d\overline{U}_t^{*,i}=\left((2\kappa-a^2) \overline{\hat{Q}_t^{*,i}}-b\overline{V}_t^{*,i}-\nu p_t^{{\rm bu}}-a^2\overline{V}_t^{*,i}\right)dt+\Phi_t^{*,0,i}dW_t^0,~~ \overline{U}_t^{*,i}=0,\\[0.8em]
    		\displaystyle d\overline{V}_t^{*,i}=\left(-\alpha \overline{V}_t^{*,i}-\frac{\beta^2}{2c}\overline{U}_t^{*,i}\right)dt,~~ \overline{V}_0^{*,i}=0.
    	\end{cases}
    \end{align}
Utilizing Lemma \ref{condition1}, the FBSDE system \eqref{QIconditional} is solvable. By the standard uniqueness result for FBSDEs (see e.g., \cite{MY99}), the solutions of the two systems \eqref{fbsde4.2conditional} and \eqref{QIconditional} are almost surely equal. Thus, we have $\sup_{p^{{\rm br}}\in \mathbb{U}^{{\rm br}}}\tilde{J}_i^S(p^{{\rm bu}}, p^{{\rm br}})=\tilde{J}_i^S(p^{{\rm bu}}, p^{{\rm br},*})$.	We have from \eqref{main1} that
{\small\begin{align}\label{main4}
&\sup_{p^{{\rm br}}\in\mathbb{U}^{\rm br}}J^S\left(p^{{\rm bu}},p^{{\rm br}}, \boldsymbol{\tau }^*(p^{{\rm br}})\right)-J^S\left(p^{{\rm bu}},p^{{\rm br},*}, \boldsymbol{\tau}^*(p^{{\rm br},*})\right)\leq \sup_{p^{{\rm br}}\in\mathbb{U}^{\rm br}}\left(J^S(p^{{\rm bu}},p^{{\rm br}}, \boldsymbol{\tau }^*(p^{{\rm br}}))-\frac{1}{n}\sum_{i=1}^n\tilde{J}_i^S(p^{{\rm bu}},p^{{\rm br}})\right)\nonumber \\
&\qquad\qquad\qquad +\left(\frac{1}{n}\sum_{i=1}^n\tilde{J}_i^S(p^{{\rm bu}},p^{{\rm br},*})-J^S(p^{{\rm bu}},p^{{\rm br},*}, \boldsymbol{\tau}^*(p^{{\rm br},*}))\right). 
\end{align}}Consider the 1st term on the RHS of \eqref{main4}, we have
{\small\begin{align*}
&J^S\left(p^{{\rm bu}},p^{{\rm br}}, \boldsymbol{\tau }^*(p^{{\rm br}})\right)-\frac{1}{n}\sum_{i=1}^n\tilde{J}_i^S(p^{{\rm bu}},p^{{\rm br}})\\
&=\mathbb{E}\left[\int_0^T\left(\frac{\nu}{n}\sum_{i=1}^np_t^{{\rm bu}}(Q_t^i-\hat{Q}_t^i)-\frac{a}{n}\sum_{i=1}^np_t^{{\rm br}}(Q_t^i-\hat{Q}_t^i)-\frac{\kappa}{n}\sum_{i=1}^n\left(( Q_t^i)^2-(\hat{Q}_t^i)^2\right)\right)dt\right]\\
&\leq \frac{C}{n}\sum_{i=1}^n\left\{\left(\Vert p^{{\rm bu}}\Vert_{2,T}+\Vert p^{{\rm br}}\Vert_{2,T}+\left( \mathbb{E}\left[\int_0^T|\hat{Q}_t^i|^2dt\right]\right)^{\frac{1}{2}}\right)\left( \mathbb{E}\left[\int_0^T|Q_t^i-\hat{Q}_t^i|^2dt\right]\right)^{\frac{1}{2}} +\mathbb{E}\left[\int_0^T|Q_t^i-\hat{Q}_t^i|^2dt\right]\right\}.
\end{align*}}
Note that, from Lemma \ref{lemma3.1}, it follows that $\mathbb{E}[\int_0^T|Q_t^i-\hat{Q}_t^i|^2dt]=O(n^{-1})$ for any $p^{{\rm br}}\in\mathbb{U}^{\rm br}$. Then, we have, for any $p^{{\rm br}}\in\mathbb{U}^{\rm br}$,
\begin{align}\label{eq:JtildeS10}
J^S\left(p^{{\rm bu}},p^{{\rm br}}, \boldsymbol{\tau }^*(p^{{\rm br}})\right)-\frac{1}{n}\sum_{i=1}^n\tilde{J}_i^S\left(p^{{\rm bu}},p^{{\rm br}}\right)\leq O\left(n^{-\frac{1}{2}}\right).
\end{align}
Regarding the 2nd term on RHS of \eqref{main4}, we have
\begin{align*}
&\frac{1}{n}\sum_{i=1}^n\tilde{J}_i^S(p^{{\rm bu}}, p^{{\rm br},*})-J^S(p^{{\rm bu}},p^{{\rm br},*})\\
&=\mathbb{E}\left[\int_0^T\left(\frac{\nu}{n}\sum_{i=1}^np_t^{{\rm bu}}(\hat{Q}_t^{*,i}-Q_t^{*,i})-\frac{a}{n}\sum_{i=1}^np_t^{{\rm br},*}(\hat{Q}_t^{*,i}-Q_t^{*,i})-\frac{\kappa}{n}\sum_{i=1}^n(( \hat{Q}_t^{*,i})^2-(Q_t^{*,i})^2)\right)dt\right]\\	
	     		&\leq \frac{C}{n}\sum_{i=1}^n\Bigg\{\left(\left\Vert p^{{\rm bu}}\right\Vert_{2,T}+\left\Vert p^{{\rm br},*}\right\Vert_{2,T}+\left( \mathbb{E}\left[\int_0^T|Q_t^{*,i}|^2dt\right]\right)^{\frac{1}{2}}\right)\left( \mathbb{E}\left[\int_0^T|\hat{Q}_t^{*,i}-Q_t^{*,i}|^2dt\right]\right)^{\frac{1}{2}}\\
	     		&\quad +\mathbb{E}\left[\int_0^T|\hat{Q}_t^{*,i}-Q_t^{*, i}|^2dt\right]\Bigg\},
     		\end{align*}
where $Q^{*,i}=(Q_t^{*,i})_{t\in[0,T]}$ satisfies the state dynamics \eqref{ndynamic} under $p_t^{{\rm br},*}$.
By Lemma \ref{lemma3.1} and \eqref{eqdifference}, we also have $\mathbb{E}[\int_0^T|Q_t^{*,i}-\hat{Q}_t^{*,i}|^2dt]=O(n^{-1})$. As a consequence
\begin{align}\label{eq:JtildeS20}
\frac{1}{n}\sum_{i=1}^n\tilde{J}_i^S\left(p^{{\rm bu}},p^{{\rm br},*}\right)-J^S\left(p^{{\rm bu}},p^{{\rm br},*}, \boldsymbol{\tau }^*(p^{{\rm br},*})\right)\leq O\left(n^{-\frac{1}{2}}\right).
\end{align}
Then, the desired result follows from \eqref{eq:JtildeS10} and \eqref{eq:JtildeS20}.
 \end{proof}
 
\subsection{Optimal control problem for the leader}
In this subsection, our goal is to solve the optimal control problem for the leader (i.e., the buyer), with the objective of maximizing the profit function given by
\begin{align}\label{bup}
\sup_{p^{{\rm bu}}\in\mathbb{U}^{\rm bu}}J^L(p^{{\rm bu}})=\sup_{p^{{\rm bu}}\in\mathbb{U}^{\rm bu}}\mathbb{E}\left[\int_0^T\left(-\frac{1}{2}(p_t^{{\rm bu}})^2+\frac{\lambda}{n}\sum_{i=1}^nQ_t^i-\frac{\rho}{n}\sum_{i=1}^n( Q_t^i)^2-p_t^{{\rm bu}}\frac{\nu }{n}\sum_{i=1}^nQ_t^i\right)dt\right],
\end{align}
where $2\rho\geq\nu^2$ which is used to guarantee the concavity of objective function. We next apply the asymptotic pricing strategy of the broker $p^{{\rm br},*}$ given by \eqref{asymptotic1} obtained in the previous subsection in the above optimal control problem. This implies that the control problem \eqref{bup} works with the underlying state processes given by
\begin{align}\label{statebu}
dQ_t^i&=\left(\alpha(\overline{Q}_t^{(n)}-Q_t^i)+\frac{\beta^2}{2c}\hat{Y}_t^i\right)dt+\sigma dW_t^i+\sigma_0dW_t^0,~~ Q_0^i=q_0,\nonumber\\
 d\hat{Q}_t^i&=\left(\alpha(\overline{\hat{Q}_t^i}-\hat{Q}_t^i)+\frac{\beta^2}{2c}\hat{Y}_t^i\right)dt+\sigma dW_t^i+\sigma_0dW_t^0,~~ \hat{Q}_0^i=q_0,\nonumber\\
d\hat{Y}_t^i&=\left(b\hat{Q}_t^i+\alpha \hat{Y}_t^i+a^2(\overline{\hat{Q}_t^i}+\overline{V}_t^i)\right)dt+Z_t^idW_t^i+Z_t^{0,i}dW_t^0,~~ \hat{Y}_T^i=0,\\
d\overline{U}_t^i&=\left(2\kappa \overline{\hat{Q}_t^i}-b\overline{V}_t^i-\nu p_t^{{\rm bu}}-a^2(\overline{\hat{Q}_t^i}+\overline{V}_t^i)\right)dt+\Phi_t^{0,i}dW_t^0,~~ \overline{U}_T^i=0,\nonumber\\
d\overline{V}_t^i&=\left(-\alpha \overline{V}_t^i-\frac{\beta^2}{2c}\overline{U}_t^i\right)dt,~~ \overline{V}_0^i=0,\nonumber
\end{align}
where $\overline{\hat{Q}_t^i}=\mathbb{E}[\hat{Q}_t^i|\mathcal{F}_t^0]$ and similarly for $\overline{\hat{Y}_t^i}$. Note that $\hat{Y}^i$ and $\overline{U}^i$ depend directly on $p^{{\rm br},*}$, while the dynamics of $Q^i$, $\hat{Q}^i$ and $\overline{V}^i$ are consequently also influenced through the coupling of the entire FBSDE system.

To overcome the complexity arising from high dimensionality of the control problem \eqref{bup} in large-scale settings (i.e., as $n \to \infty$), we introduce the following representative agent approximation:
	\begin{equation}\label{aux2}
		\sup_{p^{{\rm bu}}\in\mathbb{U}^{\rm bu}}\tilde{J}^L(p^{{\rm bu}})=\sup_{p^{{\rm bu}}\in\mathbb{U}^{\rm bu}}\mathbb{E}\left[\int_0^T\left(-\frac{1}{2}(p_t^{{\rm bu}})^2+\lambda Q_t-\rho Q_t^2-p_t^{{\rm bu}}\nu Q_t\right)dt\right],
	\end{equation}
	subject to the following state constraints:
	\begin{align}\label{aux2state}
		\begin{cases}
			\displaystyle dQ_t=\left(\alpha\left(\overline{Q}_t-Q_t\right)+\frac{\beta^2}{2c}Y_t\right)dt+\sigma dW_t+\sigma_0dW_t^0,~~Q_0=q_0,\\[0.6em]
			\displaystyle dY_t=\left(bQ_t+\alpha Y_t+a^2(\overline{Q}_t+\overline{V}_t)\right)dt+Z_tdW_t+Z_t^0dW_t^0,~~Y_T=0,\\[0.6em]
			\displaystyle d\overline{U}_t=\left(2\kappa \overline{Q}_t-b\overline{V}_t-\nu p_t^{{\rm bu}}-a^2(\overline{Q}_t+\overline{V}_t)\right)dt+\Phi_t^0dW_t^0,~~ \overline{U}_T=0,\\[0.6em]
			\displaystyle d\overline{V}_t=\left(-\alpha \overline{V}_t-\frac{\beta^2}{2c}\overline{U}_t\right)dt,~~\overline{V}_0=0.
		\end{cases}
	\end{align}
Then, the solvability of the representative problem \eqref{aux2}-\eqref{aux2state} is provided in the following result. The proof of the following lemma is analogous to that of Lemma \ref{pbr}, and hence we omit it.
\begin{lemma}\label{prop4.2}
Let $p_t^{{\rm bu},*}=-\nu (\overline{Q}_t^*+\overline{L}_t^*)$ for $t\in[0,T]$, where the underlying processes are given by
\begin{align}\label{fbsde4.3}
dQ_t^*&=\left(\alpha(\overline{Q}_t^*-Q_t^*)+\frac{\beta^2}{2c}Y_t^*\right)dt+\sigma dW_t+\sigma_0dW_t^0,~~Q_0^*=q_0,\nonumber\\ dY_t^*&=\left(bQ_t^*+\alpha Y_t^*+a^2(\overline{Q}_t^*+\overline{V}_t^*)\right)dt+Z_t^*dW_t+Z_t^{*,0}dW_t^0,~~ Y_T^*=0,\nonumber\\
d\overline{U}_t^*&=\left(2\kappa \overline{Q}_t^*-b\overline{V}_t^*+\nu^2 (\overline{Q}_t^*+\overline{L}_t^*)-a^2(\overline{Q}_t^*+\overline{V}_t^*)\right)dt+\Phi_t^{*,0}dW_t^0,~~ \overline{U}_T^*=0,\nonumber\\
d\overline{V}_t^*&=\left(-\alpha \overline{V}_t^*-\frac{\beta^2}{2c}\overline{U}_t^*\right)dt,~~\overline{V}_T^*=0,\\
dR_t^*&=\left(2\rho Q_t^*+\alpha R_t^*-bK_t^*-2\kappa \overline{L}_t^*-\lambda -\nu^2\overline{Q}_t^*-\alpha \overline{R}_t^*-a^2\overline{K}_t^*+(a^2-\nu^2)\overline{L}_t^*\right)dt\nonumber\\
&\quad+\Psi_t^{*,0}dW_t^0,~~R_T^*=0,\nonumber\\
dK_t^*&=\left(-\frac{\beta^2}{2c}R_t^*-\alpha K_t^*\right)dt,~~K_0^*=0;~~d\overline{L}_t^*=\frac{\beta^2}{2c}\overline{G}_t^*dt,~~\overline{L}_0^*=0,\nonumber\\
d\overline{G}_t^*&=(-a^2\overline{K}_t^*+(b+a^2)\overline{L}_t^*+\alpha \overline{G}_t^*)dt,~~\overline{G}_T^*=0.\nonumber
\end{align}
Then, $p^{{\rm bu},*}=(p_t^{{\rm bu},*})_{t\in[0,T]}\in\mathbb{U}^{\rm br}$ is an optimal control to the representative problem \eqref{aux2}-\eqref{aux2state}.
\end{lemma}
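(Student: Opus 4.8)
The plan is to mirror the convex perturbation argument of Lemma~\ref{pbr}, now with four state equations and four adjoints. Suppose $p^{{\rm bu},*}\in\mathbb{U}^{\rm bu}$ is optimal for \eqref{aux2}--\eqref{aux2state}, with associated state $(Q^*,Y^*,\overline{U}^*,\overline{V}^*)$ and martingale integrands $(Z^*,Z^{*,0},\Phi^{*,0})$. For $p^{{\rm bu}}\in\mathbb{U}^{\rm bu}$ and $\varepsilon>0$ set $p^{{\rm bu},\varepsilon}:=p^{{\rm bu},*}+\varepsilon\,\delta p^{{\rm bu}}$ with $\delta p^{{\rm bu}}:=p^{{\rm bu}}-p^{{\rm bu},*}$, and let $(\delta Q,\delta Y,\delta\overline{U},\delta\overline{V})$ be the corresponding difference quotients of the states. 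Since \eqref{aux2state} is affine in $(Q,Y,\overline{U},\overline{V},p^{{\rm bu}})$ with control-independent diffusion coefficients, this tuple solves a linear conditional mean-field FBSDE driven only by $\delta p^{{\rm bu}}$ (which enters solely the $\delta\overline{U}$-equation, through $-\nu\,\delta p^{{\rm bu}}$), with homogeneous boundary data $\delta Q_0=\delta\overline{V}_0=0$ and $\delta Y_T=\delta\overline{U}_T=0$. Expanding the quadratic functional gives $\tilde J^L(p^{{\rm bu},\varepsilon})-\tilde J^L(p^{{\rm bu},*})=-\varepsilon X_1-\varepsilon^2X_2$, where $X_2=\mathbb{E}\big[\int_0^T(\rho(\delta Q_t)^2+\tfrac12(\delta p_t^{{\rm bu}})^2+\nu\,\delta p_t^{{\rm bu}}\,\delta Q_t)\,dt\big]\ge0$ precisely because $2\rho\ge\nu^2$, the same concavity observation used in Lemma~\ref{pbr}.

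The crux is to show $X_1=0$. Take the adjoint processes $(R^*,\Psi^{*,0})$, $K^*$, $\overline{L}^*$ and $\overline{G}^*$ to be those appearing in \eqref{fbsde4.3}: $R^*$ (with $R_T^*=0$) is the costate of the forward $Q^*$-equation, $K^*$ (with $K_0^*=0$) of the backward $Y^*$-equation, $\overline{L}^*$ (with $\overline{L}_0^*=0$) of the $\overline{U}^*$-equation, and $\overline{G}^*$ (with $\overline{G}_T^*=0$) of the $\overline{V}^*$-equation; their drifts are exactly those prescribed by the Hamiltonian of the constrained problem and are displayed in \eqref{fbsde4.3}. Apply It\^{o}'s formula to $R_t^*\,\delta Q_t$, $K_t^*\,\delta Y_t$, $\overline{L}_t^*\,\delta\overline{U}_t$ and $\overline{G}_t^*\,\delta\overline{V}_t$ on $[0,T]$ and take expectations: every boundary term vanishes, since one factor of each product is zero at each endpoint, and the stochastic integrals have zero mean. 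Summing the four identities, the terms carrying $\delta Q,\delta Y,\delta\overline{U},\delta\overline{V}$ cancel against the right-hand sides of the variational dynamics, and --- after replacing $\delta Q_t$ by $\mathbb{E}[\delta Q_t\mid\mathcal{F}_t^0]$ beneath the $\mathcal{F}_t^0$-measurable factors via the tower property, exactly as in Lemma~\ref{pbr} --- one is left with $X_1=\int_0^T\mathbb{E}[(p_t^{{\rm bu},*}+\nu(\overline{Q}_t^*+\overline{L}_t^*))\,\delta p_t^{{\rm bu}}]\,dt$. Since $p^{{\rm bu},*}$ is optimal, $-\varepsilon X_1-\varepsilon^2X_2\le0$ for all $\varepsilon\in(0,1)$; using $X_2\ge0$, the a priori bound established below (which places $p^{{\rm bu},*}$ in the interior of $\mathbb{U}^{\rm bu}$), and the arbitrariness of $\delta p^{{\rm bu}}$, we conclude $X_1=0$ and hence $p_t^{{\rm bu},*}=-\nu(\overline{Q}_t^*+\overline{L}_t^*)$ for a.e.\ $t$. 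Substituting this feedback into \eqref{aux2state} reproduces the closed-loop system \eqref{fbsde4.3}; conversely, the same identity together with $X_2\ge0$ shows that this candidate is a global maximizer of the concave functional $\tilde J^L$.

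The genuinely nontrivial step --- which I expect to be the main obstacle, the rest being lengthy but routine bookkeeping --- is the well-posedness of \eqref{fbsde4.3} and the admissibility $p^{{\rm bu},*}\in\mathbb{U}^{\rm bu}$. Conditioning \eqref{fbsde4.3} on $\mathcal{F}^0$ collapses it to an eight-dimensional coupled linear forward--backward system, with ``initial'' block $(\overline{Q}^*,\overline{V}^*,\overline{L}^*,\overline{K}^*)$ and ``terminal'' block $(\overline{Y}^*,\overline{U}^*,\overline{R}^*,\overline{G}^*)$, a source term proportional to $p^{{\rm bu}}$, and a \emph{non-symmetric} decoupling Riccati equation; its solvability has to be obtained by the four-step scheme of Section~\ref{sec:4}, i.e.\ via a matrix-valued solution of an asymmetric RDE justified by a determinant condition of the type in Lemma~\ref{condition1}, together with a linear $\mathbb{F}^0$-adapted residual. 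Once $\overline{Q}^*$ and $\overline{L}^*$ --- hence $p^{{\rm bu},*}$ --- are identified, the remaining full-noise equations for $(Q^*,Y^*)$ and $(R^*,K^*)$ decouple through the scalar Riccati functions $\xi$ and $\phi$ from \eqref{xi} and \eqref{phi}, and the usual BSDE estimates combined with Gronwall's lemma yield $\mathbb{E}[\sup_{t\in[0,T]}(|\overline{Q}_t^*|^2+|\overline{L}_t^*|^2)]<\infty$, whence $\|p^{{\rm bu},*}\|_{2,T}\le C_T$ and $p^{{\rm bu},*}\in\mathbb{U}^{\rm bu}$. With these ingredients the argument goes through along the lines of Lemma~\ref{pbr}.
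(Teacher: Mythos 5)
Your proposal is correct and follows essentially the same route as the paper: the paper explicitly omits this proof as ``analogous to that of Lemma~\ref{pbr}'', and your convex-perturbation/duality argument with the four adjoint pairs, the concavity condition $2\rho\geq\nu^2$ giving $X_2\geq 0$, and the identification $X_1=0$ yielding the feedback form is exactly that adaptation. Your well-posedness discussion likewise matches what the paper carries out immediately after the lemma statement, via the conditional eight-dimensional system \eqref{fbsde4.4}, the asymmetric Riccati equation \eqref{rde2} under the determinant condition of Lemma~\ref{condition2}, and the scalar Riccati functions $\xi$ and $\phi$ for the residual full-noise equations.
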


We next handle the solvability of coupled FBSDE \eqref{fbsde4.3}. To do it, we first focus on the equation  after taking the conditional expectation on  $\mathcal{F}_t^0$ on both sides of \eqref{fbsde4.3}, which is given by, for $t\in[0,T]$,
\begin{align}\label{fbsde4.4}
\begin{cases}
\displaystyle d\overline{Q}_t^*=\frac{\beta^2}{2c}\overline{Y}_t^*dt+\sigma_0dW_t^0,~~\overline{Q}_0^*=q_0, \\[0.6em]
\displaystyle d\overline{Y}_t^*=\left((b+a^2)\overline{Q}_t^*+a^2\overline{V}_t^*+\alpha \overline{Y}_t^*\right)dt+Z_t^{*,0}dW_t^0,  ~~\overline{Y}_T^*=0,\\[0.6em]
\displaystyle d\overline{U}_t^*=\left((2\kappa+\nu^2-a^2) \overline{Q}_t^*-b\overline{V}_t^*+\nu^2\overline{L}_t^*-a^2\overline{V}_t^*\right)dt+\Phi_t^{*,0}dW_t^0, \quad \overline{U}_T^*=0,\\[0.6em]
\displaystyle d\overline{V}_t^*=\left(-\alpha \overline{V}_t^*-\frac{\beta^2}{2c}\overline{U}_t^*\right)dt, ~~ \overline{V}_0^*=0,\\[0.6em]
\displaystyle d\overline{R}_t^*=\left((2\rho-\nu^2) \overline{Q}_t^*-(b+a^2)\overline{K}_t^*-\lambda+(a^2-\nu^2-2\kappa)\overline{L}_t^*\right)dt+\Psi_t^{*,0}dW_t^0, ~~\overline{R}_T^*=0, \\[0.6em]
\displaystyle d\overline{K}_t^*=\left(-\frac{\beta^2}{2c}\overline{R}_t^*-\alpha \overline{K}_t^*\right)dt, ~~ \overline{K}_0^*=0;~~d\overline{L}_t^*=\frac{\beta^2}{2c}\overline{G}_t^*dt, ~~\overline{L}_0^*=0.
\end{cases}
\end{align}
Denote by $\mathbb{X}_t=(\overline{Q}_t^*, \overline{V}_t^*, \overline{K}_t^*, \overline{L}_t^*)^{\top}$ and $\mathbb{Y}_t=(\overline{Y}_t^*, \overline{U}_t^*, \overline{R}_t^*, \overline{G}_t^*)^{\top}$ for $t\in[0,T]$. Then, the FBSDE system \eqref{fbsde4.4} can be written as: 
\begin{align}\label{eq:babbXY}
\begin{cases}
\displaystyle d\mathbb{X}_t=(\mathbb{A}_1\mathbb{X}_t+\mathbb{B}_1\mathbb{Y}_t)dt+\mathbb{D}dW_t^0, ~~ \mathbb{X}_0=(q_0,0,0,0)^{\top},\\[0.4em]
\displaystyle d\mathbb{Y}_t=(\mathbb{A}_2\mathbb{X}_t+\mathbb{B}_2\mathbb{Y}_t+\mathbb{C})dt+\widetilde{\mathbb{D}}_tdW_t^0,  \quad \mathbb{Y}_T=(0,0,0,0)^{\top},
\end{cases}	
\end{align}
where the coefficients are given by
\begin{align*}
\mathbb{A}_1& = \diag \left(0,-\alpha,-\alpha,0\right), \quad  \mathbb{B}_1 = \diag \left(\frac{\beta^2}{2c},-\frac{\beta^2}{2c},-\frac{\beta^2}{2c},\frac{\beta^2}{2c}\right), \quad \mathbb{B}_2 = \diag \left(\alpha,0,0,\alpha\right),\\
        		\mathbb{A}_2 &=
        		\begin{pmatrix}
        			b+a^2 &a^2  & 0 & 0 \\
        			2\kappa+\nu^2 - a^2& -b-a^2  & 0 & \nu^2 \\
        			2\rho -\nu^2& 0 & -b-a^2 & -2\kappa+a^2 - \nu^2 \\
        			0 & 0 & -a^2& b+a^2
        		\end{pmatrix}, \\%\quad \\
        	   % \mathbb{B}_2=\begin{pmatrix}
        	    	% \alpha&0&0&0\\
        	    	% 0& 0&0&0\\
        	    	% 0&0&0&0\\
        	    	% 0&0&0&\alpha
        	    %\end{pmatrix},\\
        		\mathbb{C}& = (0 , 0 , -\lambda , 0)^{\top},\quad
        		\mathbb{D} =(\sigma_0,
        			0,0,0)^{\top}, \quad\widetilde{\mathbb{D}}_t=(Z_t^{*,0},\Phi_t^{*,0},\Psi_t^{*,0},0)^{\top}.
        	\end{align*}
Here, we emphasize that the matrix $\widetilde{\mathbb{D}}_t$ for $t\in[0,T]$ is random.    Consider the form $\mathbb{Y}_t=-g_t\mathbb{X}_t+\psi_t$ with $t\to g_t$ being a deterministic matrix-valued function and $(\psi, \sigma^{\psi})=(\psi_t, \sigma_t^{\psi})_{t\in [0,T]}$ being a pair of $\mathbb{F}^0$-adapted processes satisfying $d\psi_t=\mu_t^{\psi}dt+\sigma_t^{\psi}dW_t^0$.
Applying It\^o's rule to $\mathbb{Y}_t$, and comparing the drift and diffusion terms, we have $\widetilde{\mathbb{D}}_t=-g_t\mathbb{D}+\sigma_t^{\psi}$ for $t\in[0,T]$, while 
\begin{align}\label{rde2}
d{g}_t=\left(-g_t\mathbb{A}_1+g_t\mathbb{B}_1g_t+g_t\mathbb{B}_2-\mathbb{A}_2\right)dt,~~g_T=\boldsymbol{0}_4,
\end{align}
and $d\psi_t=((g_t\mathbb{B}_1+\mathbb{B}_2)\psi_t+\mathbb{C})dt+\sigma_t^{\psi}dW_t^0$ with $\psi_T=(0,0,0,0)^{\top}$. Thus, $(g,\psi)=(g_t,\psi_t)_{t\in[0,T]}$ satisfies a linear FBSDE with random coefficients after plugging $(\mathbb{X}, \mathbb{Y})$ into \eqref{fbsde4.3}. Consequently, the system \eqref{fbsde4.3} can be decomposed into four separated systems of equations in which two of them are undetermined. The 1st and 2nd equations form a coupled FBSDE, which has a unique solution satisfying $Y_t^*=\xi_t Q_t^*+\gamma_t$ with $t\mapsto\xi_t$ and $t\mapsto\gamma_t$ being given by \eqref{xi} and \eqref{gamma}, respectively. 
Plugging $Q_t^*$ and $\overline{L}_t^*$ into the 5th equation, and assuming $R_t^*=\phi_tK_t^*+\eta_t$, we have 
\begin{align*}
d\eta_t=\frac{\beta^2}{2c}\phi_t\eta_tdt+(2\rho Q_t^*-\nu^2\overline{Q}_t^*-2\kappa \overline{L}_t^*-\lambda-a^2\overline{K}_t^*+(a^2-\nu^2)\overline{L}_t^*)dt+\sigma_t^{\eta}dW_t^0,~~ \eta_T=0,
\end{align*}
and $t\to\phi_t$ is given by \eqref{phi}. Recall $p_t^{{\rm bu},*}=-\nu (\overline{Q}_t^*+\overline{L}_t^*)$ for $t\in[0,T]$, which is given in Lemma~\ref{prop4.2}. Hence, $\Vert p^{{\rm bu},*}\Vert_{2, T}^2\leq C\mathbb{E}[\sup_{t\in [0 ,T]}|\mathbb{X}_t|^2]$ with $C>0$ being the constant depending on $T$ only. This implies $p^{{\rm bu},*}\in \mathbb{U}^{{\rm bu}}$.
Similar to Lemma \ref{condition1}, we have the following lemma which provides a condition which can ensure the solvability of $\eqref{rde2}$:
\begin{lemma}\label{condition2}
Let $\mathbb{M}=\begin{pmatrix}
\mathbb{A}_1& -\mathbb{B}_1\\
-\mathbb{A}_2	&\mathbb{B}_2 
\end{pmatrix}$ and $\boldsymbol{0}_4$ (resp. $\mathbf{I}_4$) be $4 \times 4$ zero matrix (resp. identity matrix). Assume $\det \left\{ \begin{pmatrix}\mathbf{I}_4& \boldsymbol{0}_4   \end{pmatrix} e^{\mathbb{M}(t-T)} \begin{pmatrix}  \mathbf{I}_4\\\boldsymbol{0}_4\end{pmatrix} \right\} > 0$ for $t \in [0, T]$. Then, the RDE in \eqref{rde2} has a unique solution given by
\begin{align*}
g_t = \left[ \begin{pmatrix} \boldsymbol{0}_4 & \mathbf{I}_4 \end{pmatrix} e^{\mathbb{M}(t-T)} \begin{pmatrix}   \mathbf{I}_4\\ \boldsymbol{0}_4 \end{pmatrix} \right] \left[ \begin{pmatrix}\mathbf{I}_4 &\boldsymbol{0}_4   \end{pmatrix} e^{\mathbb{M}(t-T)} \begin{pmatrix} \mathbf{I}_4 \\ \boldsymbol{0}_4 \end{pmatrix} \right]^{-1},~~\forall t\in[0,T].
\end{align*}
\end{lemma}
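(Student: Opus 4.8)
The plan is to establish Lemma~\ref{condition2} by the same linearization device used for Lemma~\ref{condition1}, namely, to turn the asymmetric matrix Riccati differential equation \eqref{rde2} into a linear terminal-value problem governed by the Hamiltonian-type matrix $\mathbb{M}$. First I would introduce the pair of $4\times 4$ matrix-valued functions $(P_t,\Lambda_t)_{t\in[0,T]}$ solving
\begin{align*}
\frac{d}{dt}\begin{pmatrix}P_t\\ \Lambda_t\end{pmatrix}=\mathbb{M}\begin{pmatrix}P_t\\ \Lambda_t\end{pmatrix},\qquad \begin{pmatrix}P_T\\ \Lambda_T\end{pmatrix}=\begin{pmatrix}\mathbf{I}_4\\ \boldsymbol{0}_4\end{pmatrix},
\end{align*}
whose explicit solution is $\begin{pmatrix}P_t\\ \Lambda_t\end{pmatrix}=e^{\mathbb{M}(t-T)}\begin{pmatrix}\mathbf{I}_4\\ \boldsymbol{0}_4\end{pmatrix}$. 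Reading off the upper and lower blocks gives $P_t=\begin{pmatrix}\mathbf{I}_4 & \boldsymbol{0}_4\end{pmatrix}e^{\mathbb{M}(t-T)}\begin{pmatrix}\mathbf{I}_4\\ \boldsymbol{0}_4\end{pmatrix}$ and $\Lambda_t=\begin{pmatrix}\boldsymbol{0}_4 & \mathbf{I}_4\end{pmatrix}e^{\mathbb{M}(t-T)}\begin{pmatrix}\mathbf{I}_4\\ \boldsymbol{0}_4\end{pmatrix}$, which are exactly the two bracketed matrices appearing in the claimed formula for $g_t$; hence the standing hypothesis $\det\{P_t\}>0$ on $[0,T]$ (compatible with $\det P_T=1$) guarantees that $P_t$ is invertible for every $t\in[0,T]$, and I would then set $g_t:=\Lambda_tP_t^{-1}$, which is precisely the closed-form expression in the statement.

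The next step is to verify that $g_t=\Lambda_tP_t^{-1}$ solves \eqref{rde2}. This is a short direct computation: differentiating, using $\tfrac{d}{dt}P_t^{-1}=-P_t^{-1}\dot P_tP_t^{-1}$ together with the block equations $\dot P_t=\mathbb{A}_1P_t-\mathbb{B}_1\Lambda_t$ and $\dot\Lambda_t=-\mathbb{A}_2P_t+\mathbb{B}_2\Lambda_t$, and substituting $\Lambda_tP_t^{-1}=g_t$, one recovers the Riccati equation \eqref{rde2} with the terminal condition $g_T=\Lambda_TP_T^{-1}=\boldsymbol{0}_4$. For uniqueness I would invoke the standard ODE argument: the right-hand side of \eqref{rde2} is a polynomial in $g$, hence locally Lipschitz, so the terminal-value problem admits a unique maximal solution, and the only obstruction to extending it over all of $[0,T]$ is a finite blow-up time — which is excluded here precisely because $P_t$ remains nonsingular, so that $g_t=\Lambda_tP_t^{-1}$ is the unique solution on $[0,T]$. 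This is the point at which I would cite Theorem~4.3 of \cite{MY99} and Lemma~3 of \cite{MB18}, exactly as for Lemma~\ref{condition1}.

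I expect the genuinely delicate issue to be the reason the determinant hypothesis cannot be dropped: because $\mathbb{M}$ is not of the symmetric Hamiltonian form arising from an LQ control problem with a symmetric running cost, no monotonicity or comparison principle is available to rule out Riccati blow-up a priori, so the condition $\det\{P_t\}>0$ is exactly what plays the role of such a guarantee; I would also note that, since $t\mapsto\det P_t$ is continuous and $\det P_T=1$, this condition holds automatically when $T$ is small enough. Everything afterwards is routine: once $g$ is known on $[0,T]$, the coefficient $g_t\mathbb{B}_1+\mathbb{B}_2$ is bounded and the driver $\mathbb{C}$ is square-integrable, so the linear FBSDE for $(\mathbb{X}_t,\psi_t)$ is solvable by standard linear BSDE theory together with the Gronwall estimate already recorded before the lemma, which in turn decouples the full system \eqref{fbsde4.3} through $Y_t^{*}=\xi_tQ_t^{*}+\gamma_t$ and $R_t^{*}=\phi_tK_t^{*}+\eta_t$ and yields $p^{{\rm bu},*}\in\mathbb{U}^{\rm bu}$.
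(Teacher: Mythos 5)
Your proof is correct and is precisely the standard Radon linearization argument that the paper invokes implicitly for this lemma by pointing back to Lemma~\ref{condition1} and the cited results of Ma--Yong and Moon--Basar, so you and the paper take essentially the same route (yours is just written out in full). One small remark: your block equations yield the Riccati equation with the term $\mathbb{B}_2 g_t$ rather than the $g_t\mathbb{B}_2$ literally printed in \eqref{rde2}; since the derivation of \eqref{rde2} from the ansatz $\mathbb{Y}_t=-g_t\mathbb{X}_t+\psi_t$ and the stated closed form both correspond to $\mathbb{B}_2 g_t$, this is a typo in the displayed equation rather than a gap in your argument.
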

Then, we have
\begin{theorem}[Asymptotically optimal strategy for buyer]\label{thm:pubustar}
The process $p_t^{{\rm bu},*}=(p_t^{{\rm bu},*})_{t\in[0,T]}\in \mathbb{U}^{\rm bu}$ given in Lemma~\ref{prop4.2} is  asymptotically optimal in the sense that
there exists $\epsilon_3=O(n^{-\frac{1}{2}})>0$ such that 
\begin{align*}
J^L\left(p^{{\rm bu},*}\right)+\epsilon_3\geq \sup_{p^{{\rm bu}}\in\mathbb{U}^{\rm bu}}J^L\left(p^{{\rm bu}}\right).
\end{align*}
\end{theorem}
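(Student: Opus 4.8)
The plan is to reproduce, at the level of the buyer, the two-step comparison scheme already used for the broker. For each $i=1,\ldots,n$ introduce the per-agent auxiliary control problem
\[
\sup_{p^{\rm bu}\in\mathbb{U}^{\rm bu}}\tilde{J}_i^L(p^{\rm bu})=\sup_{p^{\rm bu}\in\mathbb{U}^{\rm bu}}\mathbb{E}\left[\int_0^T\left(-\tfrac12(p_t^{\rm bu})^2+\lambda\hat{Q}_t^i-\rho(\hat{Q}_t^i)^2-p_t^{\rm bu}\nu\hat{Q}_t^i\right)dt\right],
\]
subject to the $i$-th decoupled subsystem $(\hat{Q}^i,\hat{Y}^i,\overline{U}^i,\overline{V}^i)$ of \eqref{statebu}. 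Following the variational argument of Lemma~\ref{prop4.2} (perturb $p^{\rm bu}$, introduce the adjoint processes, apply It\^o's formula, and use the concavity implied by $2\rho\ge\nu^2$) shows that the optimizer of $\tilde{J}_i^L$ is $-\nu(\overline{\hat{Q}}_t^{*,i}+\overline{L}_t^{*,i})$, where the conditional-mean state variables together with the adjoint conditional means solve a forward--backward system with coefficients that do not depend on $i$ and coincide with \eqref{fbsde4.4}. By Lemma~\ref{condition2} that system has a unique solution; consequently the optimizer of every $\tilde{J}_i^L$ equals the representative optimizer $p^{\rm bu,*}$ of Lemma~\ref{prop4.2}, so that $\sup_{p^{\rm bu}}\frac1n\sum_{i=1}^n\tilde{J}_i^L(p^{\rm bu})=\frac1n\sum_{i=1}^n\tilde{J}_i^L(p^{\rm bu,*})$.

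Next I would split, exactly as in \eqref{main4},
\[
\sup_{p^{\rm bu}}J^L(p^{\rm bu})-J^L(p^{\rm bu,*})\le\sup_{p^{\rm bu}}\Big(J^L(p^{\rm bu})-\tfrac1n\sum_{i=1}^n\tilde{J}_i^L(p^{\rm bu})\Big)+\Big(\tfrac1n\sum_{i=1}^n\tilde{J}_i^L(p^{\rm bu,*})-J^L(p^{\rm bu,*})\Big),
\]
which reduces the statement to controlling $J^L(p^{\rm bu})-\frac1n\sum_i\tilde{J}_i^L(p^{\rm bu})$ uniformly in $p^{\rm bu}\in\mathbb{U}^{\rm bu}$. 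Writing out this difference and applying the Cauchy--Schwarz inequality together with $(x-y)^2-x^2\le y^2+2|x|\,|y|$, it is dominated by a constant times $\frac1n\sum_i\big(\|p^{\rm bu}\|_{2,T}+\|Q^i\|_{2,T}+\|\hat{Q}^i\|_{2,T}\big)\|Q^i-\hat{Q}^i\|_{2,T}+\frac1n\sum_i\|Q^i-\hat{Q}^i\|_{2,T}^2$. Since $\|p^{\rm bu}\|_{2,T}\le C_T$ by definition of $\mathbb{U}^{\rm bu}$ and since the uniform-in-$(i,n)$ bounds $\mathbb{E}[\sup_t|Q_t^i|^2]\vee\mathbb{E}[\sup_t|\hat{Q}_t^i|^2]\le C$ follow from the standard FBSDE estimates used in Lemma~\ref{lemma3.1}, everything comes down to proving $\mathbb{E}[\int_0^T|Q_t^i-\hat{Q}_t^i|^2dt]=O(n^{-1})$ uniformly in $i$.

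For that last estimate I would use that, in \eqref{statebu}, $Q^i$ and $\hat{Q}^i$ are driven by the \emph{same} adjoint process $\hat{Y}^i$, so their difference solves $d(Q_t^i-\hat{Q}_t^i)=[\alpha(\overline{Q}_t^{(n)}-\overline{\hat{Q}_t^i})-\alpha(Q_t^i-\hat{Q}_t^i)]\,dt$ with null initial value; Gronwall's lemma then gives $\sup_t\mathbb{E}[|Q_t^i-\hat{Q}_t^i|^2]\le C\,\mathbb{E}[\int_0^T|\overline{Q}_s^{(n)}-\overline{\hat{Q}_s^i}|^2ds]$. Because the conditional-mean subsystem solved by $(\overline{\hat{Q}^i},\overline{\hat{Y}^i},\overline{U}^i,\overline{V}^i)$ has $i$-independent coefficients and, by the solvability conditions (Lemma~\ref{condition1}, resp.\ Lemma~\ref{condition2}), a unique solution, the process $\overline{\hat{Q}_t}:=\overline{\hat{Q}_t^i}$ is common and the centred variables $\hat{Q}_t^i-\overline{\hat{Q}_t}$ are conditionally i.i.d.\ given $\mathcal{F}_t^0$; decomposing $\overline{Q}_t^{(n)}-\overline{\hat{Q}_t}=\frac1n\sum_k(Q_t^k-\hat{Q}_t^k)+\frac1n\sum_k(\hat{Q}_t^k-\overline{\hat{Q}_t})$, the second term has second moment $O(n^{-1})$ by the conditional law of large numbers, while the first is handled by averaging the per-$i$ inequalities and a Gronwall argument exactly as in the proof of Proposition~\ref{ANE} and \eqref{eqdifference}. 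This yields $\frac1n\sum_k\sup_t\mathbb{E}[|Q_t^k-\hat{Q}_t^k|^2]=O(n^{-1})$, hence $\mathbb{E}[\int_0^T|Q_t^i-\hat{Q}_t^i|^2dt]=O(n^{-1})$, and feeding this back (together with the same computation at $p^{\rm bu,*}$) bounds both terms of the split by $O(n^{-1/2})$, so $\epsilon_3=O(n^{-1/2})$.

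The main obstacle I anticipate is the propagation-of-chaos estimate $\mathbb{E}[\int_0^T|\overline{Q}_t^{(n)}-\overline{\hat{Q}_t^i}|^2dt]=O(n^{-1})$: one has to simultaneously exploit uniqueness of the coupled conditional-mean forward--backward system (so that $\overline{\hat{Q}^i}$ genuinely does not depend on $i$) and control the fluctuation of the empirical average of the $n$ idiosyncratically-driven forward components, all inside a fully coupled FBSDE whose coefficients already carry both the buyer's control and the broker's embedded asymptotic response. A subsidiary but non-trivial point is confirming that the variational computation of Lemma~\ref{prop4.2}, carried out per index $i$, really produces the single $i$-independent optimizer $p^{\rm bu,*}$, which again rests on the solvability hypothesis of Lemma~\ref{condition2}.
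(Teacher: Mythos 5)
Your proposal is correct and follows essentially the same route as the paper: the per-agent auxiliary problems $\tilde{J}_i^L$, the identification of their common optimizer with $p^{{\rm bu},*}$ via the uniqueness of the conditional-mean FBSDE \eqref{fbsde4.4}, the two-term split as in \eqref{main5}, the Cauchy--Schwarz bound, and the reduction to $\mathbb{E}[\int_0^T|Q_t^i-\hat{Q}_t^i|^2dt]=O(n^{-1})$ are exactly the paper's steps. The only cosmetic difference is that you re-derive the propagation-of-chaos estimate (cancellation of the common $\hat{Y}^i$, Gronwall, and the conditional law of large numbers) where the paper simply invokes Lemma~\ref{lemma3.1} together with the computation in \eqref{eqdifference}; the content is identical.
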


\begin{proof}
Let $C>0$ be a genic constant depending on $T>0$ only, which may be different from line to line. By using the standard estimate for $\mathbb{X}$ and $\mathbb{Y}$ satisfying \eqref{eq:babbXY}, we arrive at $\mathbb{E}[\sup_{t\in [0,T]}(|\mathbb{X}_t|^2+|\mathbb{Y}_t|^2)]\leq C$. 
To prove the theorem, we introduce the following auxiliary control problem given by, for $i=1,\ldots,n$,
\begin{align}\label{aux3}
\sup_{p^{{\rm bu}}\in\mathbb{U}^{\rm bu}}\tilde{J}_i^L(p^{{\rm bu}}):=\sup_{p^{{\rm bu}}\in\mathbb{U}^{\rm bu}}\mathbb{E}\left[\int_0^T\left(-\frac{1}{2}(p_t^{{\rm bu}})^2+\lambda \hat{Q}_t^i-\rho (\hat{Q}_t^i)^2-\nu p_t^{{\rm bu}} \hat{Q}_t^i\right)dt\right],
\end{align}
where the process $\hat{Q}^i=(\hat{Q}_t^i)_{t\in[0,T]}$ satisfies the dynamics \eqref{statebu}.
 Using an argument similar to that in the proof of Lemma~\ref{prop4.2}, together with the uniqueness result of the solution to FBSDE \eqref{fbsde4.4}, we have $\sup_{p^{{\rm bu}}\in\mathbb{U}^{\rm bu}}$ $\tilde{J}_i^L(p^{{\rm bu}})=\tilde{J}_i^L(p^{{\rm bu},*})$. As a result, one has
\begin{align}\label{main5}
&\sup_{p^{{\rm bu}}\in\mathbb{U}^{\rm bu}}J^L\left(p^{{\rm bu}})-J^L(p^{{\rm bu},*}\right)\leq \sup_{p^{{\rm bu}}\in\mathbb{U}^{\rm bu}}\left(J^L\left(p^{{\rm bu}}\right)-\frac{1}{n}\sum_{i=1}^n\tilde{J}_i^L\left(p^{{\rm bu}}\right)\right)\nonumber\\
&\qquad\qquad+\left(\frac{1}{n}\sum_{i=1}^n\sup_{p^{{\rm bu}}\in\mathbb{U}^{\rm bu}}\tilde{J}_i^L\left(p^{{\rm bu},*}\right)-J^L\left(p^{{\rm bu},*}\right)\right).
\end{align}
Consider the 1st term on RHS of the above display \eqref{main5}, we have
{\small\begin{align*}
&J^L\left(p^{{\rm bu}}\right)-\frac{1}{n}\sum_{i=1}^n\tilde{J}_i^L\left(p^{{\rm bu}}\right)=\mathbb{E}\left[\int_0^T\left(p_t^{{\rm bu}}\frac{\nu}{n}\sum_{i=1}^n(\hat{Q}_t^i-Q_t^i)+\frac{\lambda}{n}\sum_{i=1}^n(\hat{Q}_t^i-Q_t^i)-\frac{\rho}{n}\sum_{i=1}^n((Q_t^i)^2-(\hat{Q}_t^i)^2)\right)dt\right]\\
&\quad\leq \frac{C}{n}\sum_{i=1}^n\left\{\left(\Vert p^{{\rm bu}}\Vert_{2,T}+\left( \mathbb{E}\left[\int_0^T|Q_t^i|^2dt\right]\right)^{\frac{1}{2}}\right)\left( \mathbb{E}\left[\int_0^T|Q_t^i-\hat{Q}_t^i|^2dt\right]\right)^{\frac{1}{2}} +\mathbb{E}\left[\int_0^T|Q_t^i-\hat{Q}_t^i|^2dt\right]\right\}.
\end{align*}}Note that $\mathbb{E}[\int_0^T|Q_t^i-\hat{Q}_t^i|^2dt]=O(n^{-1})$ for any $p^{{\rm bu}}\in\mathbb{U}^{\rm bu}$, by using Lemma \ref{lemma3.1}. As a result, it holds that
$J^L\left(p^{{\rm bu}}\right)-\frac{1}{n}\sum_{i=1}^n\tilde{J}_i^L\left(p^{{\rm bu}}\right)\leq O\left(n^{-\frac{1}{2}}\right).$

Regarding the 2nd term on RHS of \eqref{main5},
let $\hat{Q}^{*,i}=(\hat{Q}_t^{*,i})_{t\in[0,T]}$ be the process obtained from $\hat{Q}^i$ in \eqref{statebu} by replacing  $p^{{\rm bu}}$ with its optimal counterpart $p^{{\rm bu},*}$. Then, it can be deduced that
\begin{align*}
&\frac{1}{n}\sum_{i=1}^n\tilde{J}_i^L\left(p^{{\rm bu},*}\right)-J^L\left(p^{{\rm bu},*}\right)\\
&=\mathbb{E}\left[\int_0^T\left(\frac{\nu}{n}\sum_{i=1}^n(p_s^{{\rm bu},*}(\hat{Q}_t^{*,i}-Q_t^{*,i})+\frac{\lambda}{n}\sum_{i=1}^n(\hat{Q}_t^{*,i}-Q_t^{*, i})-\frac{\rho}{n}\sum_{i=1}^n((\hat{Q}_t^{*,i})^2-(Q_t^{*,i})^2)\right)dt\right]\\
&\leq \frac{C}{n}\sum_{i=1}^n\Bigg\{\left(\left\|p^{{\rm bu},*}\right\|_{2,T}+\left( \mathbb{E}\left[\int_0^T|Q_t^{*,i}|^2dt\right]\right)^{\frac{1}{2}}\right)\left( \mathbb{E}\left[\int_0^T|Q_t^{*, i}-\hat{Q}_t^{*,i}|^2dt\right]\right)^{\frac{1}{2}}\\
&\quad +\mathbb{E}\left[\int_0^T|Q_t^{*,i}-\hat{Q}_t^{*,i}|^2dt\right]\Bigg\}.
\end{align*}
Similarly to the proof of the estimate of the first term, we have $\mathbb{E}[\int_0^T|Q_t^{*,i}-\hat{Q}_t^{*,i}|^2dt]=O(n^{-1})$. As a consequence,  we have $\frac{1}{n}\sum_{i=1}^n\tilde{J}_i^L(p^{{\rm bu},*})-J^L(p^{{\rm bu},*})\leq O(n^{-\frac{1}{2}})$. Thus, it results in $\sup_{p^{{\rm bu}}\in\mathbb{U}^{\rm bu}}J^L(p^{{\rm bu}})-J^L(p^{{\rm bu},*}))\leq O(n^{-\frac{1}{2}})$. Therefore, the proof of the theorem is complete.
\end{proof}

\section{Numerical Analysis}\label{sec:numerical}

In this section, we conduct numerical simulations to demonstrate key characteristics of our model. The model parameters used in these simulations are detailed in Table \ref{table:para}.
	\begin{table}[htbp]
		\centering
		\caption{The chosen model parameter values.}
		\label{table:para}
		\begin{tabular}{ccc}
			\toprule
			\textbf{Model Parameters} & \textbf{Financial Meaning} & \textbf{Values} \\
			\midrule
			$\alpha$ & adjustment speed of quality toward average level & $0.12$ \\
			$\beta$ &   adjustment effect & $0.4$ \\
			$\sigma$ & idiosyncratic volatility  & $0.5$ \\
			$\sigma_0 $ & common volatility & $0.2$ \\
			$q_0$ & initial quality value &$-1.0$\\
			$a$ & quantity-quality coefficient of datasets & $0.5$ \\
			$b$ & marginal cost coefficient & $0.4$ \\
			$c$ & adjustment effort cost coefficient &$0.03$\\
			$\kappa $ & manufacture cost parameter& $0.3$ \\
			$\rho $ & marginal utility coefficient& $0.25$ \\
			$\lambda$ &quadratic utility parameter&$0.6$\\
			$\nu$& quantity-quality coefficient of data products & $0.7$\\
			$T$& time horizon & $1.0$\\
			$n$ &the volume of sellers &$30$\\
			\bottomrule
		\end{tabular}
	\end{table}
We note that the parameter values are chosen primarily for illustrative purposes rather than precise calibration, as our goal is to showcase the model's key properties. With the selected parameters, conditions $2\kappa \geq a^2$ and $2\rho \geq \nu^2$ are satisfied.
	\begin{figure}[h]
		\centering
		\begin{subfigure}[b]{0.45\textwidth}
			\centering
			\includegraphics[width=\textwidth]{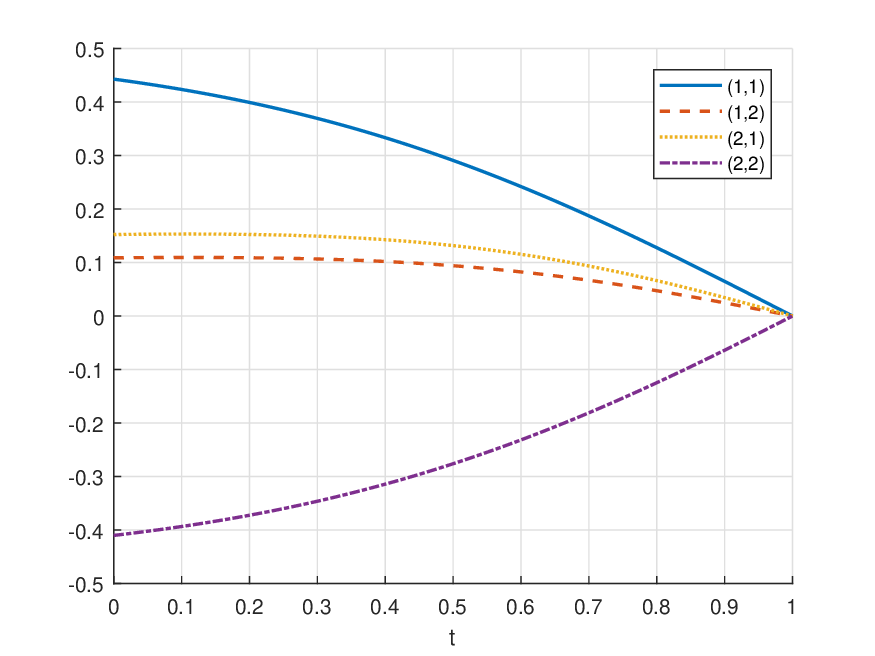}
			\caption{The solutions of RDE \eqref{ft}.}
			%\label{fig:sub1}
		\end{subfigure}
		\begin{subfigure}[b]{0.45\textwidth}
			\centering
			\includegraphics[width=\textwidth]{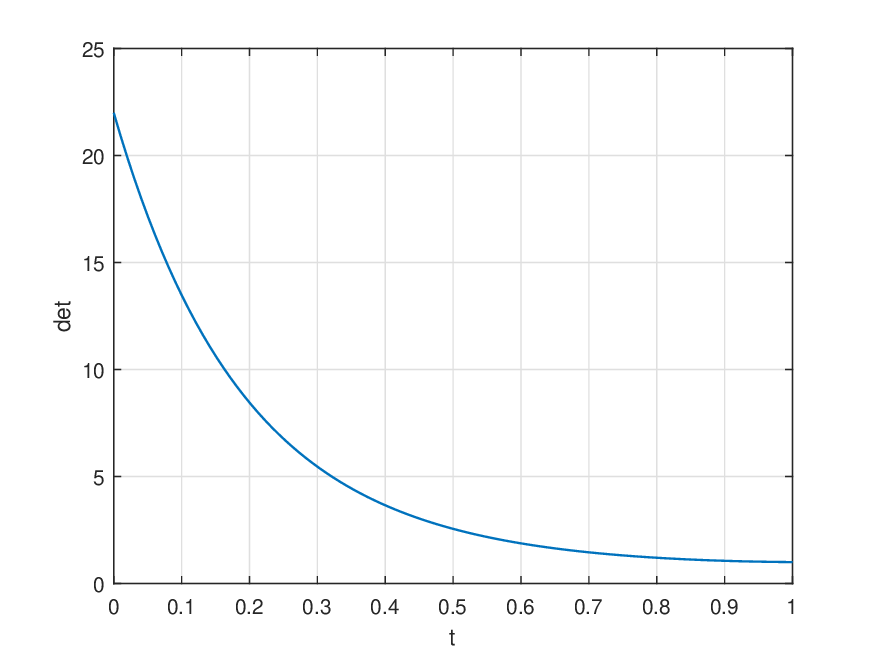}
			\caption{The determinant value in Lemma \ref{condition2}.}
			%\label{fig:sub2}
		\end{subfigure}
		\caption{The solvability of Riccati equations.}
		\label{fig:rde}
	\end{figure}
    It can be verified that, with the aforementioned parameters, the sufficient conditions outlined in Lemma \ref{condition1} and \ref{condition2} are satisfied. The solution of the RDE \eqref{ft}, denoted by $f \in \mathbb{R}^{3\times 3}$, is illustrated in Figure \ref{fig:rde}-a and the determinant value in Lemma \ref{condition2} is depicted in Figure \ref{fig:rde}-b.

	\begin{figure}[h]
		\centering
		\begin{subfigure}[b]{0.32\textwidth}
			\centering
			\includegraphics[width=\textwidth]{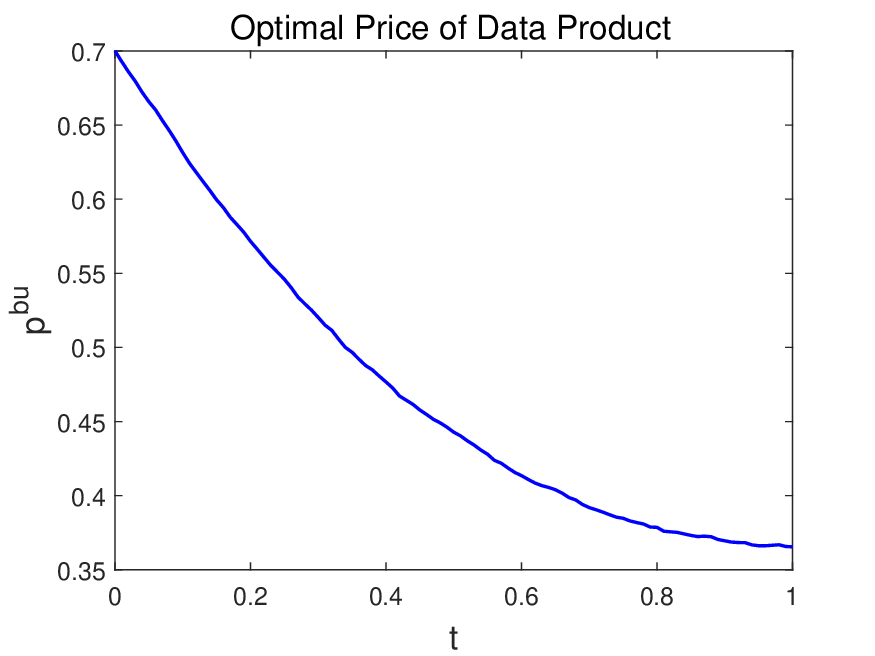}
			%\caption{A}
			%\label{fig:sub1}
		\end{subfigure}
		\begin{subfigure}[b]{0.32\textwidth}
			\centering
			\includegraphics[width=\textwidth]{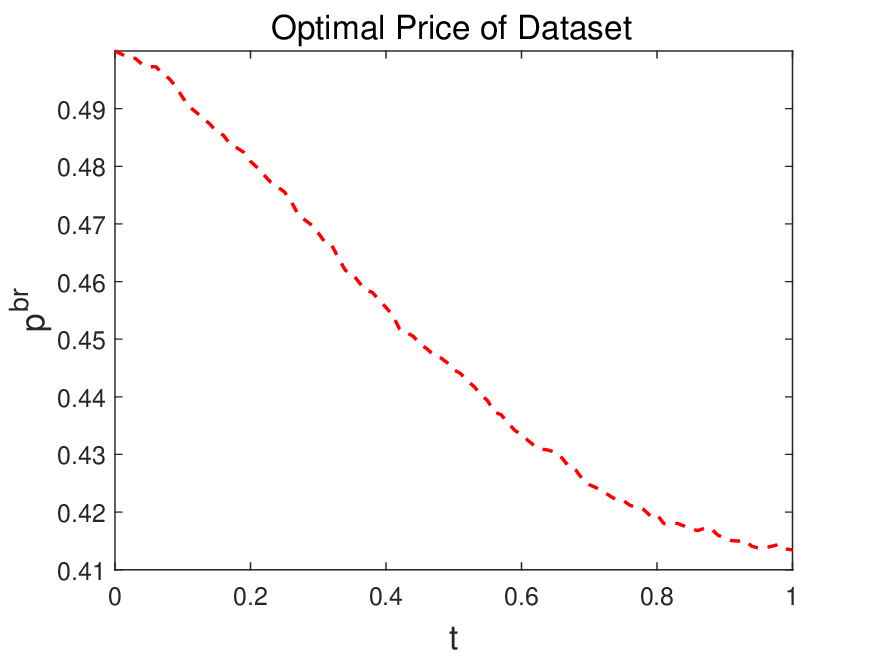}
			%\caption{B}
			%\label{fig:sub2}
		\end{subfigure}
		\begin{subfigure}[b]{0.32\textwidth}
			\centering
			\includegraphics[width=\textwidth]{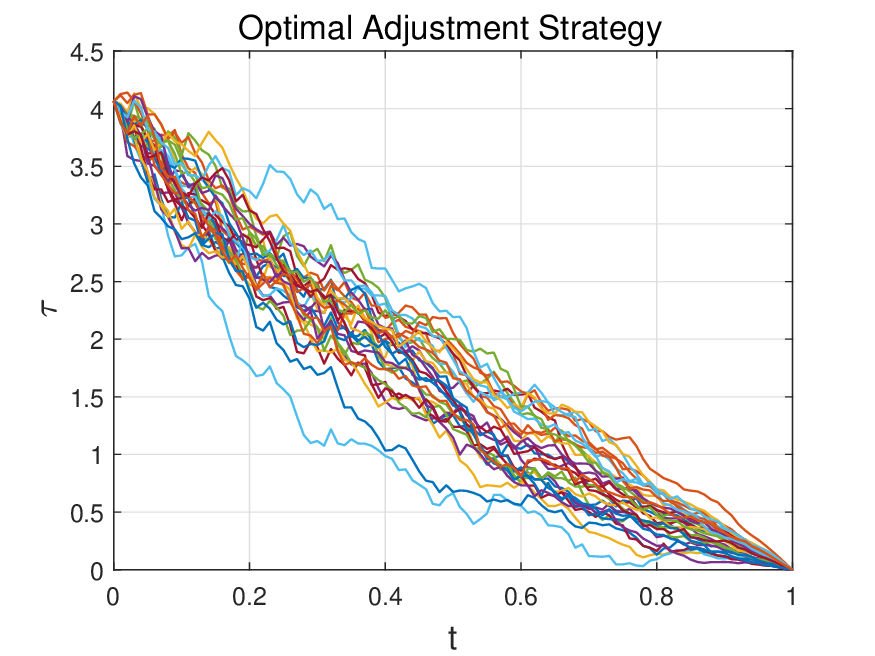}
			%\caption{C}
			%\label{fig:sub3}
		\end{subfigure}
		\caption{The trajectories of prices and adjustment strategies. }
		\label{fig:three}
	\end{figure} 
	
	In Figure \ref{fig:three}, we plot simulated price trajectories $p^{{\rm bu}}$, $p^{{\rm br}}$, starting from the initial time (corresponding to $t = 0$), up to the time $T$, together with the adjustment strategy $\boldsymbol{\tau}$ for $n = 30$ sellers.
	Based on the observation of the trajectories of prices and adjustment strategies in Figure \ref{fig:three}, the following features can be observed: all three trajectories display a downward trend.
	Compared to $p^{{\rm bu}}$, the curve of $p^{{\rm br}}$ has experienced a more significant decline. While the trajectories of individual sellers exhibit different levels of volatility—a reflection of their unique, idiosyncratic shocks—they share a co-movement in their overall trend. This common directional tendency is a direct manifestation of the mean-reverting force. It indicates that despite diverse short-term fluctuations, the long-term evolution of all sellers is anchored and pulled towards a common equilibrium level by the same market mechanism.
	
	It can be seen from Figure \ref{fig:three} that, in the early stage, the buyer pays a high price to stimulate the supply of high-quality data. In response to this high offer, the broker also increases his willing-to-pay price to the sellers. In response to the high purchase price, the seller has made great efforts to adjust the data quality.
	As the inherent quality decays, the buyer is essentially facing a diminishing return on quality. To achieve a balance between cost savings and profit maintenance, the buyer takes first to decrease his bid. This signal of price reduction was passed down step by step.  The broker receives the decline in the buyer's price, which lowers the  bid to the seller. As a result, sellers will also reduce their investment in quality. When making efforts decline, the quality will not collapse immediately. The smooth change of the state variable (quality) forces the control variables (price/effort) to also change smoothly. As prices continue to drop, the marginal benefit of further price reduction decreases, while the potential risk of quality loss  remains relatively unchanged. Therefore, the impetus for price reduction weakens and the rate of decline naturally slows down.
	
	Subsequently, we perform a sensitivity analysis on the main parameters of the model.  We select three distinct parameters from the objective functions of three parties and examine the impact of these parameters on the strategy of three parties. 
	\begin{figure}[h]
		\centering
		\begin{subfigure}[b]{0.32\textwidth}
			\centering
			\includegraphics[width=\textwidth]{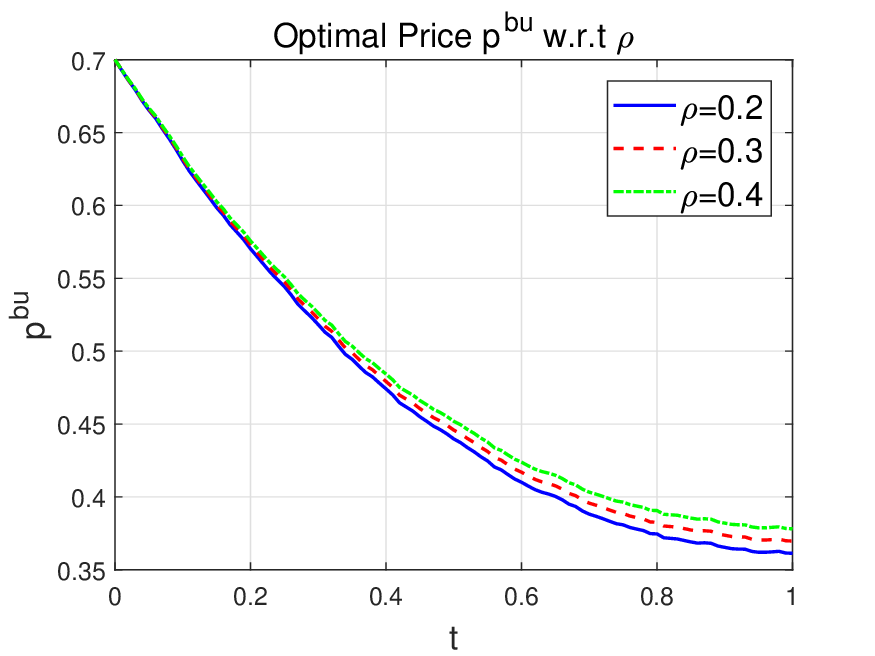}
			\caption{}
			%\label{fig:sub1}
		\end{subfigure}
		\begin{subfigure}[b]{0.32\textwidth}
			\centering
			\includegraphics[width=\textwidth]{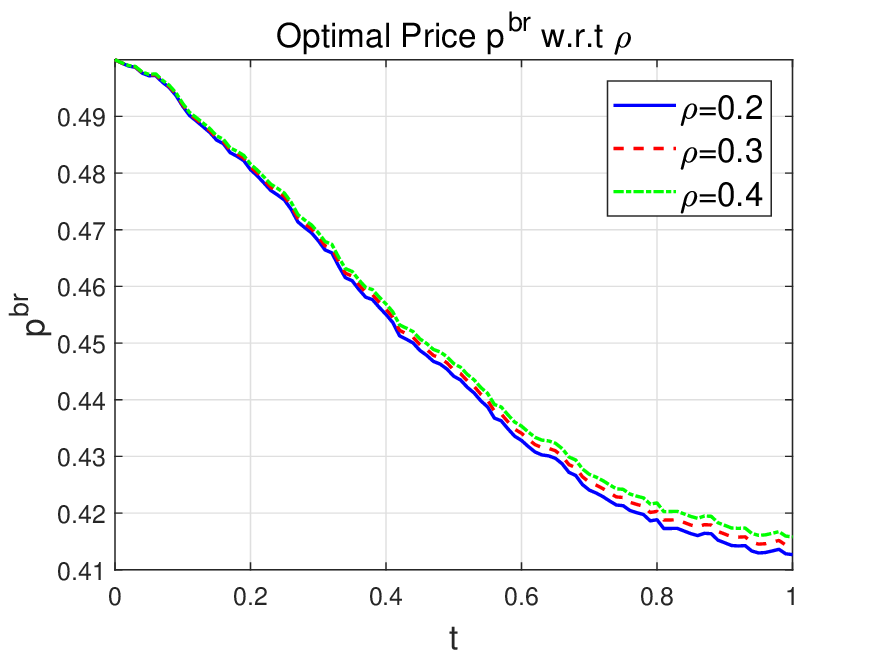}
			\caption{}
			%	\label{fig:sub2}
		\end{subfigure}
		\begin{subfigure}[b]{0.32\textwidth}
			\centering
			\includegraphics[width=\textwidth]{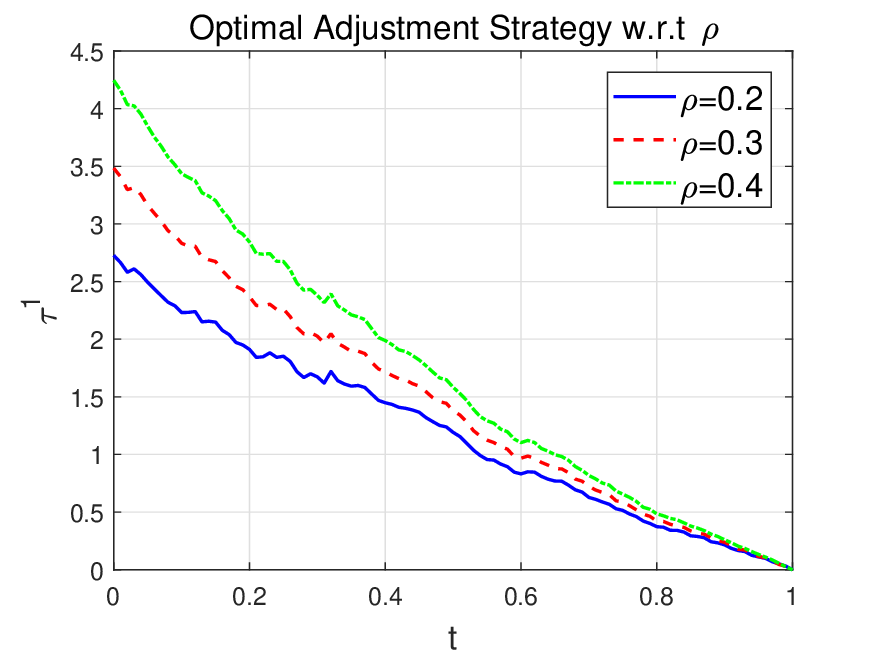}
			\caption{}
			%	\label{fig:sub3}
		\end{subfigure}
		\caption{The strategies of pricing and adjustment w.r.t  marginal utility coefficient $\rho$.}
		\label{fig:rho}
	\end{figure}
	
Figure \ref{fig:rho} presents the effect of marginal utility coefficient $\rho$ on the strategies of three parties. The graphs were all simulated with other parameters as specified in Table \ref{table:para} while $\rho$ takes different values. The influence of leaders upon sub-leader and followers is highlighted here. We observe that as the value of $\rho$ increases, the first two graphs exhibit a downward trend to varying degrees. In other words, the increase in $\rho$ has raised the prices. Note that an increase in $\rho$ indicates that the buyer's aversion to uneven, unreliable, or unstable data quality has grown. The buyer realizes that if the offer is too low, the sellers will not have sufficient motivation to maintain and improve the data quality, resulting in a low overall quality level and high volatility, which will seriously damage the buyer's utility. At this time, his optimal strategy is to increase the data product price to brokers. This signal propagates down the chain, compelling the broker to correspondingly increase the data set price. A higher price $p^{{\rm br}}$ will bring stronger marginal returns to sellers, prompting them to invest more resources  to enhance and maintain a higher $Q^i$. When  most sellers thereby improve their quality levels, the minimum quality level is raised and the variance of the overall quality will decrease.

	Figure \ref{fig:kappa} depicts how $\kappa$ affect three parties' strategies. $\kappa$ represents the coefficient of the marginal cost of the process by which the broker produces the original data set into a data product. It directly affects the pricing strategy of broker. We can see that as $\kappa$ increases, the price $p^{{\rm br}}$ rises correspondingly. It can be interpreted as for fixed $p^{{\rm bu}}$ set by the buyer, the broker's optimal response is to raise its price $p^{{\rm br}}$ to obtain higher-quality data to cover its increased operational costs. Sellers, responding to the higher $p^{{\rm br}}$, increase their adjustment effort $\tau$, leading to a higher aggregate supply. The buyer, foreseeing this chain of events, recognizes that its initial price will now result in a larger-than-desired market quality. Due to the concavity of its utility function, a higher supply reduces marginal utility. Therefore, to balance the oversupply market and maintain utility, the buyer's optimal  strategy is to lower its price $p^{{\rm bu}}$. The buyer's objective function has no direct relationship with $\kappa$, but it will have an indirect impact through equilibrium. This reflects the typical Stackelberg behavior of leader (buyer) adjusting strategies by expecting the equilibrium results of sub-games.
	
	\begin{figure}[h]
		\centering
		\begin{subfigure}[b]{0.32\textwidth}
			\centering
			\includegraphics[width=\textwidth]{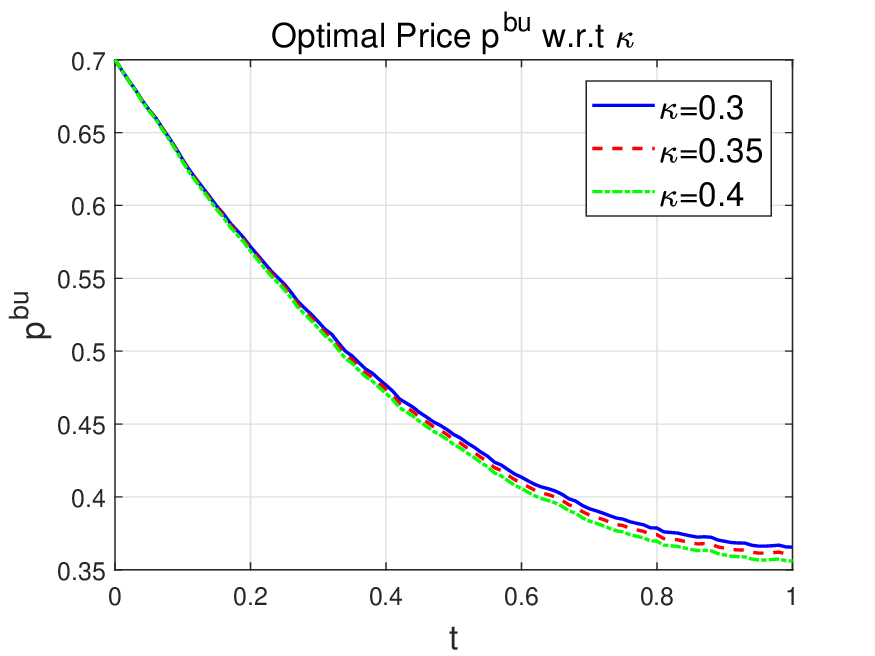}
			%\caption{A}
			%\label{fig:sub1}
		\end{subfigure}
		\begin{subfigure}[b]{0.32\textwidth}
			\centering
			\includegraphics[width=\textwidth]{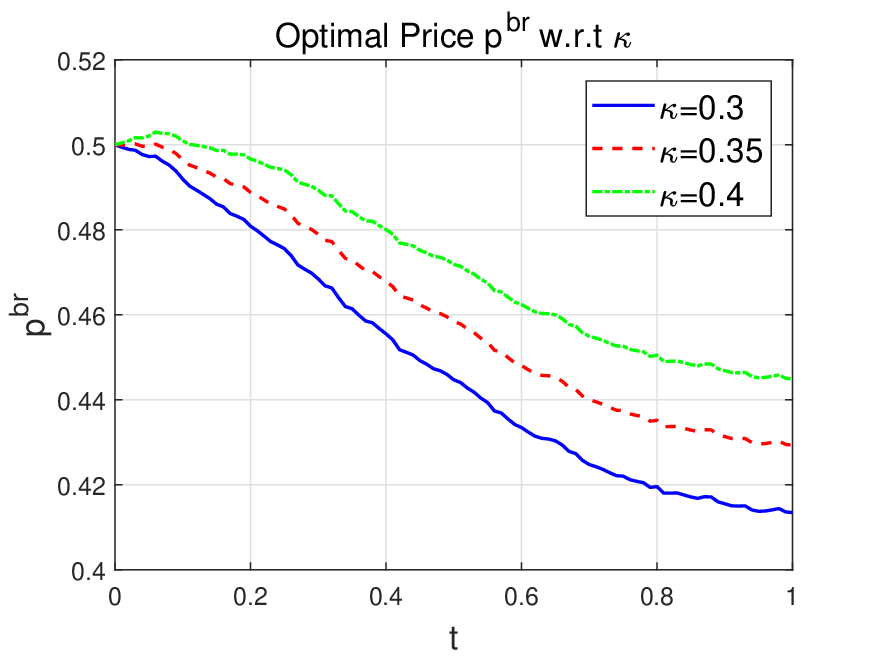}
			%\caption{B}
			%\label{fig:sub2}
		\end{subfigure}
		\begin{subfigure}[b]{0.32\textwidth}
			\centering
			\includegraphics[width=\textwidth]{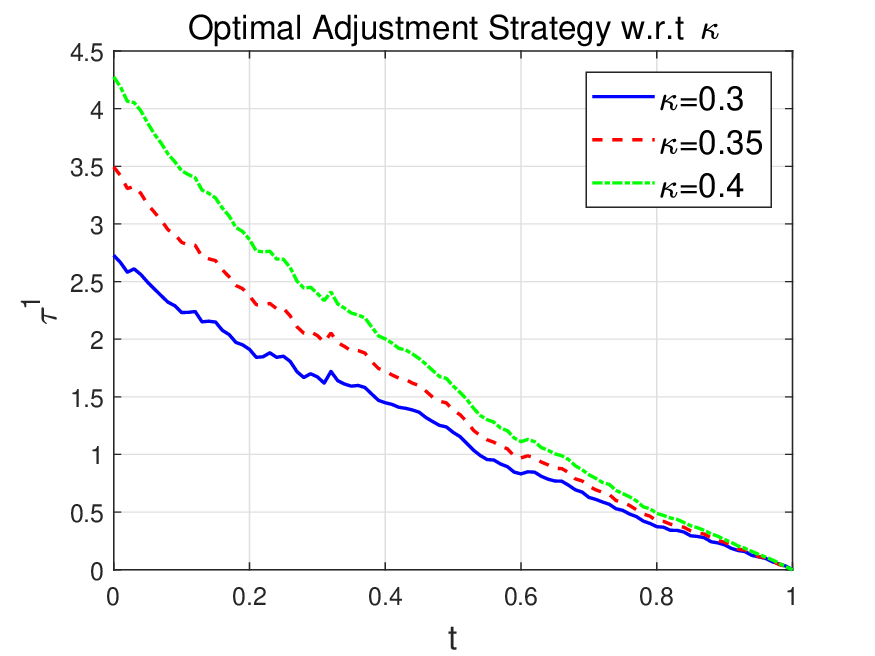}
			%\caption{C}
			%\label{fig:sub3}
		\end{subfigure}
		\caption{The strategies of pricing and adjustment w.r.t manufacture cost parameter $\kappa$.}
		\label{fig:kappa}
	\end{figure}
	
	As is shown in Figure \ref{fig:c}, the curve of the strategies of pricing and adjustment changing with $c$ is given. The parameter $c$ represents the cost coefficient for the sellers' adjustment effort $\tau^i$ in their cost term $c(\tau^i)^2$. Since we are considering homogeneous sellers, an increase in $c$ directly led to the decline of $\tau^i$. A higher $c$ significantly raises the marginal cost of adjustment for sellers. This causes a decrease in their optimal adjustment effort  $\tau^i$, as it becomes more expensive to maintain previous activity levels. Anticipating the sellers' reduced responsiveness, the broker and buyer aim to stimulate supply by offering a higher price $p^{{\rm br}}$ and $p^{{\rm bu}}$. 
	\begin{figure}[h]
		\centering
		\begin{subfigure}[b]{0.32\textwidth}
			\centering
			\includegraphics[width=\textwidth]{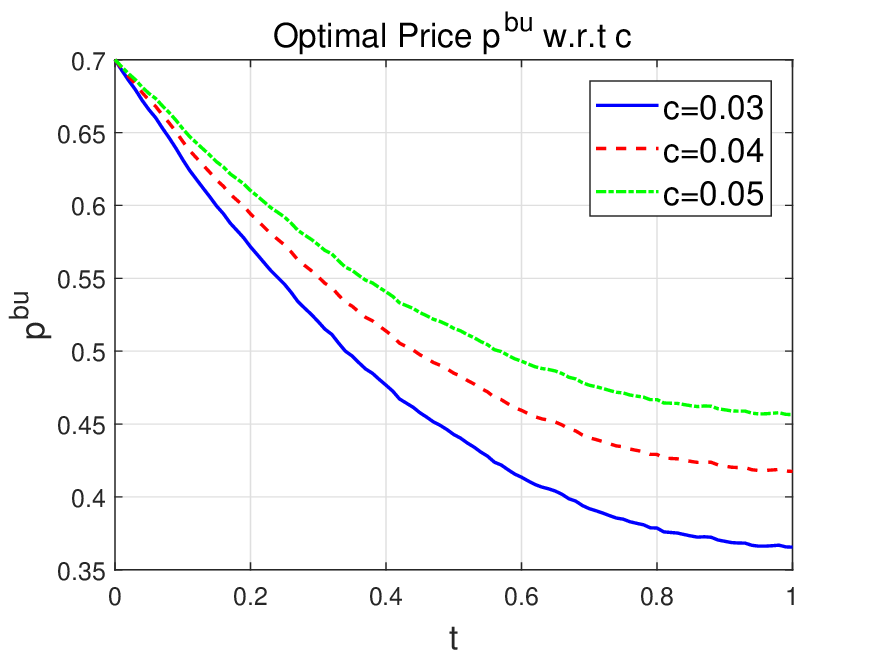}
			%	\caption{A}
			%	\label{fig:sub1}
		\end{subfigure}
		\begin{subfigure}[b]{0.32\textwidth}
			\centering
			\includegraphics[width=\textwidth]{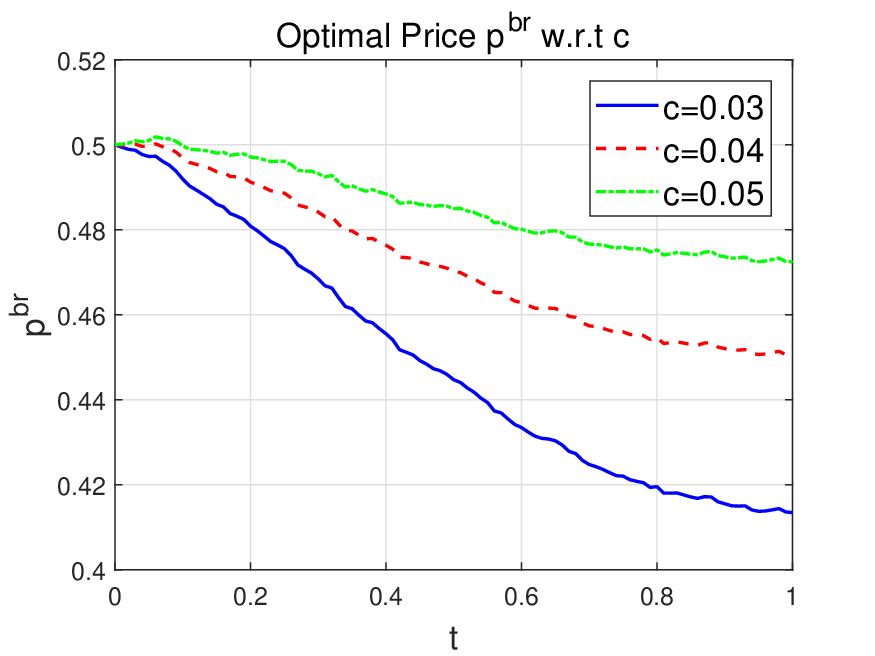}
			%	\caption{B}
			%	\label{fig:sub2}
		\end{subfigure}
		\begin{subfigure}[b]{0.32\textwidth}
			\centering
			\includegraphics[width=\textwidth]{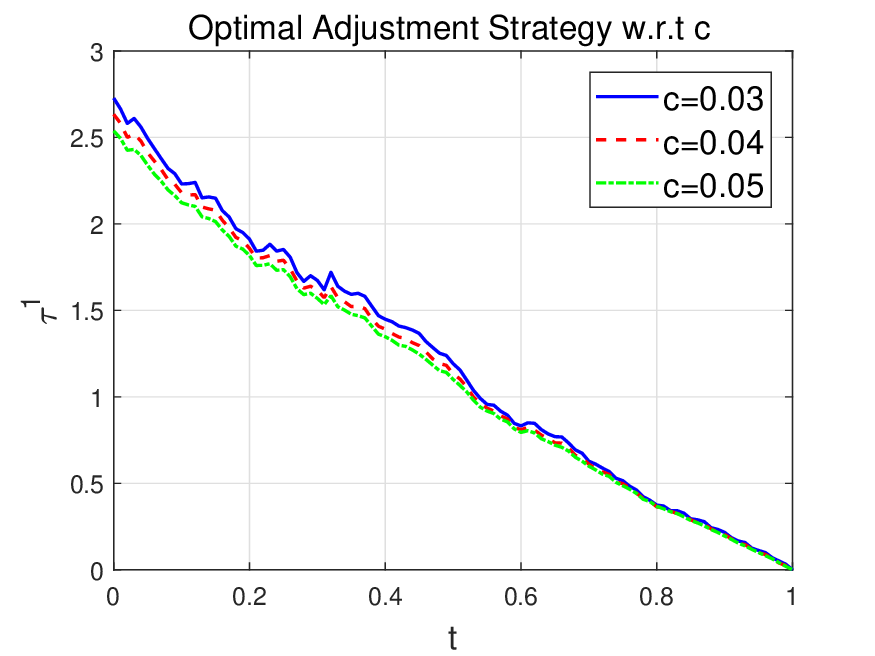}
			%	\caption{C}
			%	\label{fig:sub3}
		\end{subfigure}
		\caption{The strategies of pricing and adjustment w.r.t adjustment cost coefficient $c$.}
		\label{fig:c}
	\end{figure}

\appendix
\section{Proofs of Auxiliary Results}\label{sec:appendix}   	

In this appendix, we provide the proof of the auxiliary lemma:
{\small 
\begin{proof}[Proof of Lemma \ref{lemma3.1}]
Let $C>0$ be a constant depending on $T>0$ only, but may be different from line to line. It follows from \eqref{audynamic} that, for $t\in[0,T]$,
\begin{align*}
\hat{Y}_t^{*, i}=-\int_t^T(b \hat{Q}_s^{*, i}+\alpha  \hat{Y}_s^{*, i}-a p_s^{{\rm br}})ds-\int_t^T Z_s^{*,i}dW_s^i-\int_t^TZ_s^{*, 0, i}dW_s^0.    
\end{align*}
By applying It\^o’s rule to $|\hat{Y}_t^{*,i}|^2$ and %taking expectations on both sides of the resulting equality, we have 
%			\[\mathbb{E}\left[|\hat{Y}_t^{*,i}|^2+\int_t^T|Z_s^{*,i}|^2ds+\int_t^T|Z_s^{*, 0,i}|^2ds\right]=2\mathbb{E}\left[\int_t^T(b \hat{Q}_s^{*, i}+\alpha  \hat{Y}_s^{*, i}-a p_s^{{\rm br}})\hat{Y}_s^{*,i}ds\right].
%\]
the inequality $2xy\leq \frac{1}{\varepsilon}x^2+\varepsilon y^2$ for some $\varepsilon>0$, we have, for $t\in[0,T]$,
\begin{align*}
&\mathbb{E}\left[\left|\hat{Y}_t^{*, i}\right|^2+\int_t^T\left|Z_s^{*,i}\right|^2ds+\int_t^T\left|Z_s^{*,0,i}\right|^2ds\right]\leq C\left\{\int_t^T\mathbb{E}\left[|\hat{Q}_s^{*, i}|^2\right]ds+\int_t^T\mathbb{E}\left[|\hat{Y}_s^{*, i}|^2\right]ds+\mathbb{E}\left[\int_t^T|p_s^{{\rm br}}|^2ds\right]\right\}\nonumber\\
&\qquad\qquad\leq C\left\{1+\int_t^T\mathbb{E}\left[|\hat{Q}_s^{*, i}|^2\right]ds+\int_t^T\mathbb{E}\left[|\hat{Y}_s^{*, i}|^2\right]ds\right\}.
\end{align*}
On the other hand, it holds that
\begin{align*}
\sup_{t\in [0,T]}\left|\hat{Y}_t^{*, i}\right|^2\leq C\left\{\int_0^T\left(\left|\hat{Q}_s^{*, i}\right|^2+\left|\hat{Y}_s^{*,i}\right|^2+\left|p_s^{{\rm br}}\right|^2\right)ds+\sup_{t\in [0,T]}\left|\int_t^TZ_s^{*,i}dW_s^i\right|^2+\sup_{t\in [0,T]}\left|\int_t^TZ_s^{*, 0,i}dW_s^0\right|^2\right\}.
\end{align*}
By employing the BDG inequality, we obtain 
\begin{align}\label{est0}
\mathbb{E}\left[\sup_{t\in [0,T]}|\hat{Y}_t^{*, i}|^2\right]&\leq C\left\{\int_0^T\mathbb{E}\left[\sup_{s\in [0,t]}|\hat{Q}_s^{*, i}|^2\right]dt+\int_0^T\mathbb{E}\left[\sup_{s\in [0,t]}|\hat{Y}_s^{*, i}|^2\right]dt+\mathbb{E}\left[\int_0^T|p_t^{{\rm br}}|^2dt\right]\right\}\nonumber\\
&\leq C\left\{1+\int_0^T\mathbb{E}\left[\sup_{s\in [0,t]}|\hat{Q}_s^{*, i}|^2\right]dt+\int_0^T\mathbb{E}\left[\sup_{s\in [0,t]}|\hat{Y}_s^{*, i}|^2\right]dt\right\}.
\end{align}
We also have from \eqref{audynamic} that
\begin{align*}
\hat{Q}_t^{*,i}=q_0+\int_0^t\left(\alpha\left(m_t^*-\hat{Q}_s^{*, i}\right)+\frac{\beta ^2}{2c }\hat{Y}_s^ i\right)ds+\sigma  W_s^i+\sigma_0W_s^0,\quad \forall t\in[0,T].    
\end{align*}
This yields from BDG inequality that
\begin{align}\label{est}
\mathbb{E}\left[\sup_{t\in [0,T]}\left|\hat{Q}_t^{*, i}\right|^2\right]\leq C\left\{1+\int_0^T	\mathbb{E}\left[\sup_{s\in [0,t]}\left|\hat{Q}_s^{*, i}\right|^2\right]dt+\int_0^T\mathbb{E}\left[\sup_{s\in [0,t]}\left|\hat{Y}_s^{*,i}\right|^2\right]dt\right\}.
\end{align}
By Combining \eqref{est0} with \eqref{est}, and using the Gronwall's lemma,  it can be deduced that
$\mathbb{E}[\sup_{t\in [0,T]}|\hat{Y}_t^{*,i}|^2]\leq C$.
By \eqref{ndynamic}, it holds that
\begin{align}\label{est2}
\mathbb{E}\left[\sup_{t\in [0,T]}\left|Q_t^{*, i}\right|^2\right]\leq C\left\{1+\int_0^T	\mathbb{E}\left[\sup_{s\in [0,t]}\left|Q_s^{*, i}\right|^2\right]dt+\int_0^T	\frac{1}{n}\sum_{i=1}^n\mathbb{E}\left[\sup_{s\in [0,t]}\left|Q_s^{*,i}\right|^2\right]dt\right\}.
\end{align}
By averaging $n$ inequalities above, it follows that
\begin{align*}
\frac{1}{n}\sum_{i=1}^n\mathbb{E}\left[\sup_{t\in [0,T]}\left|Q_t^{*,i}\right|^2\right]\leq C\left\{1+\int_0^T	\frac{1}{n}\sum_{i=1}^n\mathbb{E}\left[\sup_{s\in [0,t]}\left|Q_s^{*,i}\right|^2\right]dt\right\}.
\end{align*}
This implies from Gronwall's lemma that $\frac{1}{n}\sum_{i=1}^n\mathbb{E}\left[\sup_{t\in [0,T]}|Q_t^{*, i}|^2\right]\leq C$. Substituting it into \eqref{est2}, the desired result $\mathbb{E}\left[\sup_{t\in [0,T]}|Q_t^{*, i}|^2\right]\leq C$ is obtained .

Recall $m_t^*=\overline{Q}_t^*$ for $t\in[0,T]$. Here, for simplicity, we omit the subscript $m^*$ in $\overline{Q}_t^{m^*,*}$, which corresponds to the mean field term $m^*$ given in Lemma \ref{mfe}. Then $\overline{Q}^*=(\overline{Q}_t^*)_{t\in[0,T]}$ obeys that \begin{align}\label{ccfbsde}
d\overline{Q}_t^{*}=\frac{\beta^2}{2c}\overline{Y}_t^{*}dt+\sigma_0 dW_t^0,~~\overline{Q}_0^{*}=q_0;~~d\overline{Y}_t^{*}=\left(b\overline{Q}_t^{*}+\alpha \overline{Y}_t^{*}- ap_t^{{\rm br}}\right)dt+Z_t^{ *,0}dW_t^0,~~\overline{Y}_T^*=0.
\end{align}
Summing \eqref{ndynamic} from $i=1$ to $n$ and dividing by $n$, we get $d\overline{Q}_t^{*, (n)}=\frac{\beta^2}{2cn}\sum_{i=1}^n\hat{Y}_t^{*, i}dt+\sigma\frac{1}{n}\sum_{i=1}^n dW_t^i+\sigma_0dW_t^0$ with $\overline{Q}_0^{*,(n)}=q_0$. Then, we have
\begin{align}\label{lem2}
\mathbb{E}\left[\sup_{t\in [0,T]}\left|\overline{Q}_t^{*,(n)}-m_t^*\right|^2\right]&\leq C \left\{\int_0^T\mathbb{E}\left[\sup_{s\in [0, t]}\left|\frac{1}{n}\sum_{i=1}^n \hat{Y}_s^{*,i}-\overline{Y}_t^*\right|^2\right]dt+\mathbb{E}\left[\sup_{t\in [0,T]}\left|\frac{1}{n}\sum_{i=1}^ndW_t^i\right|^2\right]\right\}\nonumber\\
&\leq C \left\{\int_0^T\mathbb{E}\left[\sup_{s\in [0, t]}\left|\frac{1}{n}\sum_{i=1}^n \hat{Y}_s^{*,i}-\overline{Y}_t^*\right|^2\right]dt+\frac{1}{n}\right\}.
\end{align}
Decoupling \eqref{ndynamic} by following the same procedure as for \eqref{fbsde3.1}, we have $\hat{Y}_t^{*,i}=\xi_t(\hat{Q}_t^{*,i}-m_t^*)+\vartheta_tm_t^*+\zeta_t$,  $Z_t^{*,i}=\sigma\xi_t$, $Z_t^{*,0,i}=\sigma_0\vartheta_t+\sigma_t^{\zeta}$,
and $d\hat{Q}_t^{*,i}=\left(\left(\alpha-\frac{\beta^2}{2c}\xi_t\right)\left(m_t^*-\hat{Q}_t^{*,i}\right)+\frac{\beta^2}{2c}\left(\vartheta_tm_t^*+\zeta_t\right)\right)dt+\sigma dW_t^i+\sigma_0dW_t^0$. Taking conditional expectation and taking the difference, for $i=1,\ldots,n$, 
\begin{align*}
d\left(\overline{\hat{Q}_t^{*,i}}-\overline{Q_t^*}\right)=\left(\alpha-\frac{\beta^2}{2c}\xi_t\right) \left(\overline{Q_t^*}-\overline{\hat{Q}_t}^{*,i}\right)dt,\quad \overline{\hat{Q}_0^{*,i}}-\overline{Q_0^*}=0.    
\end{align*}
Consequently, a.s. $\overline{\hat{Q}_t^{*,i}}:=\mathbb{E}[\hat{Q}_t^{*,i}|\mathcal{F}_t^0]=\overline{Q_t^*}=m_t^*$ and $\overline{\hat{Y}_t^{*,i}}:=\mathbb{E}[\hat{Y}_t^{*,i}|\mathcal{F}_t^0]=\overline{Y}_t^*$ for $t\in[0,T]$. Note that $\mathbb{E}\left[\sup_{t\in[0, T]}\left|\frac{1}{n}\sum_{i=1}^n\hat{Y}_t^{*,i}-\overline{Y}_t^*\right|^2\right]\leq \frac{1}{n}\mathbb{E}\left[\sup_{t\in[0, T]}\left|\hat{Y}_t^{*,1}-\mathbb{E}[\hat{Y}_t^{*,1}|\mathcal{F}_t^0]\right|^2\right]=O(n^{-1})$. Substituting it into \eqref{lem2}, we have $\mathbb{E}\left[\sup_{t\in [0,T]}\left|\overline{Q}_t^{*,(n)}-m_t^*\right|^2\right]=O(n^{-1})$. Thus, the proof of the lemma is complete.
\end{proof}}
    
\end{document}